	\newcommand{\ftn}[3]{ #1 : #2 \rightarrow #3 }
		\newcommand{\setof}[2]{\ensuremath{\left\{ #1 \: : \: #2 \right\}}}
		\newcommand{\ab}{\mathfrak{Ab}}
	\newcommand{\kl}{\ensuremath{\mathit{KL}}\xspace}
	\newcommand{\kk}{\ensuremath{\mathit{KK}}\xspace}
	\newcommand{\Hom}{\ensuremath{\operatorname{Hom}}}
	\newcommand{\id}{\ensuremath{\operatorname{id}}}
	\newcommand{\Ext}{\ensuremath{\operatorname{Ext}}}
	\newcommand{\multialg}[1]{\mathcal{M}(#1)\xspace}
	\newcommand{\corona}[1]{\mathcal{Q}(#1)\xspace}
		\newcommand{\Z}{\ensuremath{\mathbb{Z}}\xspace}
	\newcommand{\C}{\ensuremath{\mathbb{C}}\xspace}
	\newcommand{\N}{\ensuremath{\mathbb{N}}\xspace}
	\newcommand{\K}{\ensuremath{\mathbb{K}}\xspace}
	\newcommand{\catc}{\mathfrak{C}^{*}\text{-}\mathfrak{alg}}
	\newcommand{\kkc}{\mathfrak{KK}}
	\theoremstyle{plain}
	\newtheorem{thm}{Theorem}[section]
	\newtheorem{lemma}[thm]{Lemma}
	\newtheorem{theor}[thm]{Theorem}
	\newtheorem{propo}[thm]{Proposition}
	\newtheorem{corol}[thm]{Corollary}
	\theoremstyle{definition}
	\newtheorem{defin}[thm]{Definition}
	\newtheorem{remar}[thm]{Remark}
	\newtheorem{notat}[thm]{Notation}
	\newtheorem{examp}[thm]{Example}
	\numberwithin{equation}{section}
	\numberwithin{figure}{section}
\begin{document}
	\title{Strong classification of extensions of classifiable $C^{*}$-algebras}
	\author{S{\o}ren Eilers}
        \address{Department of Mathematical Sciences \\
        University of Copenhagen\\
        Universitetsparken~5 \\
        DK-2100 Copenhagen, Denmark}
        \email{eilers@math.ku.dk }
          \author{Gunnar Restorff}
\address{Faculty of Science and Technology\\University of Faroe 
Islands\\N\'oat\'un 3\\FO-100 T\'orshavn\\Faroe Islands}
\email{gunnarr@setur.fo}
	\author{Efren Ruiz}
        \address{Department of Mathematics\\University of Hawaii,
Hilo\\200 W. Kawili St.\\
Hilo, Hawaii\\
96720-4091 USA}
        \email{ruize@hawaii.edu}
        \date{\today}
	

	\keywords{Classification, extensions, graph algebras}
	\subjclass[2010]{Primary: 46L35}

	\begin{abstract}
	We show that certain extensions of classifiable $C^{*}$-algebra are strongly classified by the associated six-term exact sequence in $K$-theory together with the positive cone of $K_{0}$-groups of the ideal and quotient.  We apply our result to give a complete classification of graph $C^{*}$-algebras with exactly one ideal.  
	\end{abstract}

        \maketitle

\section{Introduction}

The classification program for $C^*$-algebras has for the most part
progressed independently for the classes of infinite and finite
$C^*$-algebras, and  great strides have been made in this program for each
of these classes.  In the finite case, Elliott's Theorem classifies
all AF-algebras up to stable isomorphism by the ordered $K_0$-group.
In the infinite case, there are a number of results for purely
infinite $C^*$-algebras.  The Kirchberg-Phillips Theorem classifies
certain simple purely infinite $C^*$-algebras up to stable isomorphism
by the $K_0$-group together with the $K_1$-group.  For nonsimple
purely infinite $C^*$-algebras many partial results have been
obtained: R\o rdam has shown that certain purely infinite
$C^*$-algebras with exactly one proper nontrivial ideal are
classified up to stable isomorphism by the associated six-term exact
sequence of $K$-groups \cite{extpurelyinf}, the second named author has shown that
nonsimple Cuntz-Krieger algebras satisfying Condition (II) are
classified up to stable isomorphism by their filtered $K$-theory
\cite[Theorem~4.2]{gr:ckalg}, and Meyer and Nest have shown that
certain purely infinite $C^*$-algebras with a linear ideal lattice are
classified up to stable isomorphism by their filtrated $K$-theory
\cite[Theorem~4.14]{rmrn:uctkkx}.  However, in all of these situations
the nonsimple $C^*$-algebras that are classified have the property
that they are either AF-algebras or purely infinite, and consequently
all of their ideals and quotients are of the same type.

Recently, the authors have provided
a framework for classifying nonsimple $C^*$-algebras that are not
necessarily AF-algebras or purely infinite $C^*$-algebras.  In
particular, the authors have shown in \cite{ERRshift} that
certain extensions of classifiable $C^*$-algebras may be classified up
to stable isomorphism by their associated six-term exact sequence in $K$-theory.  This has allowed for the classification of certain nonsimple $C^*$-algebras in which there are ideals and quotients of
mixed type (some finite and some infinite).  The results in \cite{ERRshift} was then used by the first named author and Tomforde in \cite{semt_classgraphalg} to classify a certain class of non-simple graph $C^{*}$-algebras, showing that graph $C^{*}$-algebras with exactly one non-trivial ideal can be classified up to stable isomorphism by their associated six-term exact sequence in $K$-theory.  The authors in \cite{err:fullext} then showed that all non-unital graph $C^{*}$-algebras with exactly one non-trivial ideal can be classified up to isomorphism by their associated six-term exact sequence in $K$-theory.  In this paper, we complete the classification of graph $C^{*}$-algebras with exactly one non-trivial ideal by classifying those that are unital.  Our methods here differ rather dramatically from the methods in \cite{semt_classgraphalg} and \cite{err:fullext}.  In particular, we use the traditional methods of classification via existence and uniqueness theorems.  As a consequence, for unital graph $C^{*}$-algebras $\mathfrak{A}$ and $\mathfrak{B}$ with exactly one non-trivial ideal, then any isomorphism between the associated six-term exact sequence in $K$-theory which preserves the unit lifts to an isomorphism from $\mathfrak{A}$ to $\mathfrak{B}$.

\section{Preliminaries}

\subsection{$C^{*}$-algebras over topological spaces} Let $X$ be a topological space and let $\mathbb{O}( X)$ be the set of open subsets of $X$, partially ordered by set inclusion $\subseteq$.  A subset $Y$ of $X$ is called \emph{locally closed} if $Y = U \setminus V$ where $U, V \in \mathbb{O} ( X )$ and $V \subseteq U$.  The set of all locally closed subsets of $X$ will be denoted by $\mathbb{LC}(X)$.  The set of all connected, non-empty, locally closed subsets of $X$ will be denoted by $\mathbb{LC}(X)^{*}$.  

The partially ordered set $( \mathbb{O} ( X ) , \subseteq )$ is a \emph{complete lattice}, that is, any subset $S$ of $\mathbb{O} (X)$ has both an infimum $\bigwedge S$ and a supremum $\bigvee S$.  More precisely, for any subset $S$ of $\mathbb{O} ( X )$, 
\begin{equation*}
\bigwedge_{ U \in S } U = \left( \bigcap_{ U \in S } U \right)^{\circ} \quad \mathrm{and} \quad \bigvee_{ U \in S } U = \bigcup_{ U \in S } U.
\end{equation*}

For a $C^{*}$-algebra $\mathfrak{A}$, let $\mathbb{I} ( \mathfrak{A} )$ be the set of closed ideals of $\mathfrak{A}$, partially ordered by $\subseteq$.  The partially ordered set $( \mathbb{I} ( \mathfrak{A} ), \subseteq )$ is a complete lattice.  More precisely, for any subset $S$ of $\mathbb{I} ( \mathfrak{A} )$, 
\begin{equation*}
\bigwedge_{ \mathfrak{I} \in S } \mathfrak{I} = \bigcap_{ \mathfrak{I} \in S } \mathfrak{I}  \quad \mathrm{and} \quad \bigvee_{ \mathfrak{I} \in S } \mathfrak{I} = \overline{ \sum_{ \mathfrak{I} \in S } \mathfrak{I} }.
\end{equation*}

\begin{defin}
Let $\mathfrak{A}$ be a $C^{*}$-algebra.  Let $\mathrm{Prim} ( \mathfrak{A} )$ denote the \emph{primitive ideal space} of $\mathfrak{A}$, equipped with the usual hull-kernel topology, also called the Jacobson topology.

Let $X$ be a topological space.  A \emph{$C^{*}$-algebra over $X$} is a pair $( \mathfrak{A} , \psi )$ consisting of a $C^{*}$-algebra $\mathfrak{A}$ and a continuous map $\ftn{ \psi }{ \mathrm{Prim} ( \mathfrak{A} ) }{ X }$.  A $C^{*}$-algebra over $X$, $( \mathfrak{A} , \psi )$, is \emph{separable} if $\mathfrak{A}$ is a separable $C^{*}$-algebra.  We say that $( \mathfrak{A} , \psi )$ is \emph{tight} if $\psi$ is a homeomorphism.  
\end{defin}

We always identify $\mathbb{O} ( \mathrm{Prim} ( \mathfrak{A} ) )$ and $\mathbb{I} ( \mathfrak{A} )$ using the lattice isomorphism
\begin{equation*}
U \mapsto \bigcap_{ \mathfrak{p} \in \mathrm{Prim} ( \mathfrak{A} ) \setminus U } \mathfrak{p}.
\end{equation*}
 Let $( \mathfrak{A} , \psi )$ be a $C^{*}$-algebra over $X$.  Then we get a map $\ftn{ \psi^{*} }{ \mathbb{O} ( X ) }{ \mathbb{O} ( \mathrm{Prim} ( \mathfrak{A} ) )  \cong \mathbb{I} ( \mathfrak{A} ) }$ defined by
\begin{equation*}
U \mapsto \setof{ \mathfrak{p} \in \mathrm{Prim} ( \mathfrak{A} ) }{ \psi ( \mathfrak{p} ) \in U } = \mathfrak{A}( U )
\end{equation*}
For $Y = U \setminus V \in \mathbb{LC} ( X )$, set $\mathfrak{A}( Y ) = \mathfrak{A} ( U ) / \mathfrak{A}(V)$.   By Lemma~2.15 of \cite{rmrn:bootstrap}, $\mathfrak{A} ( Y)$ does not depend on $U$ and $V$.

\begin{examp}
For any $C^{*}$-algebra $\mathfrak{A}$, the pair $( \mathfrak{A} , \id_{ \mathrm{Prim} ( \mathfrak{A} ) } )$ is a tight $C^{*}$-algebra over $\mathrm{Prim} ( \mathfrak{A} )$.  For each $U \in \mathbb{O} ( \mathrm{Prim} ( \mathfrak{A} ) )$, the ideal $\mathfrak{A} ( U )$ equals $\bigcap_{ \mathfrak{p} \in \mathrm{Prim} ( \mathfrak{A} ) \setminus U } \mathfrak{p}$. 
\end{examp}

\begin{examp}\label{Xn}
Let $X_{n} = \{ 1, 2, \dots, n \}$ partially ordered with $\leq$.  Equip $X_{n}$ with the Alexandrov topology, so the non-empty open subsets are 
\begin{equation*}
[a,n ] = \setof{ x \in X }{ a \leq x \leq n}
\end{equation*}
for all $a \in X_{n}$; the non-empty closed subsets are $[1,b]$ with $b \in X_{n}$, and the  non-empty locally closed subsets are those of the form $[a,b]$ with $a, b \in X_{n}$ and $a \leq b$.  Let $( \mathfrak{A} , \phi )$ be a $C^{*}$-algebra over $X_{n}$.  We will use the following notation throughout the paper:  
\begin{equation*}
\mathfrak{A} [k] = \mathfrak{A} ( \{ k \} ),\ \mathfrak{A}[a,b] = \mathfrak{A} ( [ a , b ] ),\ \text{and} \ \mathfrak{A}(i, j] = \mathfrak{A}[i+1, j].
\end{equation*}
Using the above notation we have ideals $\mathfrak{A} [ a, n ]$ such that 
\begin{equation*}
\{ 0 \} \unlhd \mathfrak{A} [ n]  \unlhd \mathfrak{A} [ n -1, n ]  \unlhd \cdots \unlhd \mathfrak{A}  [ 2, n ]  \unlhd \mathfrak{A}  [1,n] = \mathfrak{A}.
\end{equation*}  

\end{examp}

\begin{defin}
Let $\mathfrak{A}$ and $\mathfrak{B}$ be $C^{*}$-algebras over $X$.  A homomorphism $\ftn{ \phi }{ \mathfrak{A} }{ \mathfrak{B} }$ is \emph{$X$-equivariant} if $\phi ( \mathfrak{A} (U) ) \subseteq \mathfrak{B} ( U )$ for all $U \in \mathbb{O}(X)$.  Hence, for every $Y = U \setminus V$, $\phi$ induces a homomorphism $\ftn{ \phi_{Y} }{ \mathfrak{A} ( Y ) }{ \mathfrak{B} (Y) }$.  Let $\catc(X)$ be the category whose objects are $C^{*}$-algebras over $X$ and whose morphisms are $X$-equivariant homomorphisms.

An $X$-equivariant homomorphism $\ftn{ \phi }{ \mathfrak{A} }{ \mathfrak{B} }$ is said to be a \emph{full $X$-equivariant homomorphism} if for all $Y \in \mathbb{LC}(X)$, $\phi_{Y} ( a )$ is norm-full in $\mathfrak{B}(Y)$ for all norm-full elements $a \in \mathfrak{A}(Y)$, i.e., the closed ideal of $\mathfrak{B} (Y)$ generated by $\phi_{Y} (a)$ is $\mathfrak{B}(Y)$ whenever the closed ideal of $\mathfrak{A}(Y)$ generated by $a$ is $\mathfrak{A}(Y)$.  
\end{defin}

\begin{remar}\label{r:X2hom}
Suppose $\mathfrak{A}$ and $\mathfrak{B}$ are tight $C^{*}$-algebras over $X_{n}$.  Then it is clear that $\ftn{ \phi }{ \mathfrak{A} }{ \mathfrak{B} }$ is an isomorphism if and only if $\phi$ is a $X_{n}$-equivariant isomorphism.  

It is easy to see that if $\mathfrak{A}$ and $\mathfrak{B}$ are tight $C^{*}$-algebras over $X_{2}$, then $\ftn{ \phi }{ \mathfrak{A} }{ \mathfrak{B} }$ is a full $X_{2}$-equivariant homomorphism if and only if $\phi$ is an $X_{2}$-equivariant homomorphism and $\phi_{\{1\}}$ and $\phi_{\{2\}}$ are injective.  Also, if $\mathfrak{A}$ and $\mathfrak{A}[2]$ have non-zero projections $p$ and $q$ respectively, then there exists $\epsilon > 0$ such that if $\ftn{ \phi }{ \mathfrak{A} }{ \mathfrak{B} }$ is a full $X_{2}$-equivariant homomorphism and $\ftn{ \psi }{ \mathfrak{A} }{ \mathfrak{B} }$ is a homomorphism such that 
\begin{align*}
\| \phi (p) - \psi(p) \| < 1 \qquad \| \phi (q) - \psi(q) \| < 1,
\end{align*}
then $\psi$ is a full $X_{2}$-equivariant homomorphism.
\end{remar}

\begin{remar}
Let $\mathfrak{e}_{i} : 0 \to \mathfrak{B}_{i} \to  \mathfrak{E}_{i} \to \mathfrak{A}_{i} \to 0$ be an extension for $i = 1, 2$.  Note that $\mathfrak{E}_{i}$ can be considered as a $C^{*}$-algebra over $X_{2} = \{ 1,2 \}$ by sending $\emptyset$ to the zero ideal, $\{2\}$ to the image of $\mathfrak{B}_{i}$ in $\mathfrak{E}_{i}$, and $\{1 , 2 \}$ to $\mathfrak{E}_{i}$.  Hence, there exists a one-to-one correspondence between $X_{2}$-equivariant homomorphisms $\ftn{ \phi }{ \mathfrak{E}_{1} }{ \mathfrak{E}_{2} }$ and homomorphisms from $\mathfrak{e}_{1}$ and $\mathfrak{e}_{2}$.  
\end{remar}

\subsection{The ideal related $K$-theory of $\mathfrak{A}$}

\begin{defin}
Let $X$ be a topological space and let $\mathfrak{A}$ be a $C^{*}$-algebra over $X$.  For open subsets $U_{1} , U_{2} , U_{3}$ of $X$ with $U_{1} \subseteq U_{2} \subseteq U_{3}$, set $Y_{1} = U_{2} \setminus U_{1}, Y_{2} = U_{3} \setminus U_{1}, Y_{3} = U_{3} \setminus U_{1} \in \mathbb{LC} ( X )$.  Then the diagram
\begin{equation*}
\xymatrix{
K_{0} ( \mathfrak{A} ( Y_{1}  ) ) \ar[r]^{ \iota_{*} } & K_{0} ( \mathfrak{A} ( Y_{2} ) ) \ar[r]^{ \pi_{*} } & K_{0} ( \mathfrak{A} ( Y_{3} ) ) \ar[d]^{\partial_{*}} \\
K_{1} ( \mathfrak{A} ( Y_{3}  ) ) \ar[u]^{ \partial_{*}} & K_{1} ( \mathfrak{A} ( Y_{2} ) ) \ar[l]^{ \pi_{*} } & K_{1} ( \mathfrak{A} ( Y_{1} ) ) \ar[l]^{\iota_{*}}
}
\end{equation*}
is an exact sequence.  The \emph{ideal related $K$-theory of $\mathfrak{A}$}, $K_{X} ( \mathfrak{A} )$, is the collection of all $K$-groups thus occurring and the natural transformations $\{ \iota_{*}, \pi_{*}, \partial_{*} \}$.  The \emph{ideal related, ordered $K$-theory of $\mathfrak{A}$}, $K_{X}^{+} ( \mathfrak{A} )$, is $K_{X}( \mathfrak{A} )$ of $\mathfrak{A}$ together with $K_{0} ( \mathfrak{A} ( Y ) )_{+}$ for all $Y \in \mathbb{LC} ( X )$.

Let $\mathfrak{A}$ and $\mathfrak{B}$ be $C^{*}$-algebras over $X$, we will say that $\ftn{\alpha }{ K_{X} ( \mathfrak{A} ) }{  K_{X} ( \mathfrak{B} ) }$ is an \emph{isomorphism} if for all $Y  \in \mathbb{LC} ( X )$, there exists a graded group isomorphism
\begin{equation*}
\ftn{\alpha_{Y, *}} { K_{*} ( \mathfrak{A} (Y) ) }{ K_{*} ( \mathfrak{B} (Y) ) }
\end{equation*}
preserving all natural transformations.  We say that $\ftn{ \alpha }{  K_{X}^{+} ( \mathfrak{A} ) }{  K_{X}^{+} ( \mathfrak{B} ) }$ is an \emph{isomorphism} if there exists an isomorphism $\ftn{\alpha }{ K_{X} ( \mathfrak{A} ) }{  K_{X} ( \mathfrak{B} ) }$  in such a way that $\alpha_{Y, 0}$ is an order isomorphism for all $Y \in \mathbb{LC}(X)$. 
\end{defin}

\begin{remar}\label{filtrated}
Meyer-Nest in \cite{rmrn:uctkkx} defined a similar functor $\mathrm{FK}_{X} ( - )$ which they called filtrated $K$-theory.  For all known cases in which there exists a UCT, the natural transformation from $\mathrm{FK}_{X}( - )$ to $K_{X} ( - )$ is an equivalence.  In particular, this is true for the space $X_{n}$.
\end{remar}

If $Y \in \mathbb{LC}(X)$ such that $Y = Y_{1} \sqcup Y_{2}$ with two disjoint relatively open subsets $Y_{1} , Y_{2} \in \mathbb{O} ( Y ) \subseteq \mathbb{LC} (X)$, then $\mathfrak{A} (Y) \cong \mathfrak{A} ( Y_{1} ) \oplus \mathfrak{A} ( Y_{2} )$ for any $C^{*}$-algebra over $X$.  Moreover, there is a natural isomorphism $K_{*} ( \mathfrak{A} (Y) )$ to $K_{*} ( \mathfrak{A}( Y_{1} ) ) \oplus K_{*} ( \mathfrak{A} ( Y_{2} ) )$ which is a positive isomorphism from $K_{0} ( \mathfrak{A} (Y) )$ to $K_{0} ( \mathfrak{A}( Y_{1} ) ) \oplus K_{0} ( \mathfrak{A} ( Y_{2} ) )$.  If $X$ is finite, then any locally closed subset is a disjoint union of its connected components.  Therefore, we lose no information when we replace $\mathbb{LC} ( X )$ by the subset $\mathbb{LC} ( X )^{*}$.  

\begin{notat}
Let $\mathcal{N}$ be the bootstrap category of Rosenberg and Schochet in \cite{uct}.

Let $\mathfrak{K}\mathfrak{K}(X)$ be the category whose objects are separable $C^{*}$-algebras over $X$ and the set of morphisms is $\kk( X ; \mathfrak{A} , \mathfrak{B} )$.  For a finite topological space $X$, let $\mathcal{B} (X) \subseteq \mathfrak{K}\mathfrak{K} (X)$ be the bootstrap category of Meyer and Nest in \cite{rmrn:bootstrap}.  By Corollary~4.13 of \cite{rmrn:bootstrap}, if $\mathfrak{A}$ is a nuclear $C^{*}$-algebra over $X$, then $\mathfrak{A} \in \mathcal{B} (X)$ if and only if $\mathfrak{A} ( \{ x \} ) \in \mathcal{N}$ for all $x \in X$. 
\end{notat}

\begin{theor}(Bonkat \cite{bonkat} and Meyer-Nest \cite{rmrn:uctkkx})\label{t:uctX}
Let $\mathfrak{A}$ and $\mathfrak{B}$ be in $\mathfrak{K}\mathfrak{K}(X_{n})$ such that $\mathfrak{A}$ is in $\mathcal{B} ( X_{n} )$, then the sequence
\begin{align*}
0 \to \Ext_{\mathcal{NT} }^{1} ( \mathsf{FK}_{X_{n}} ( \mathfrak{A} )[1], \mathsf{FK}_{X_{n}} ( \mathfrak{B} ) ) \overset{ \delta }{ \to } \kk (X_{n}; \mathfrak{A} , \mathfrak{B} ) \overset{ \Gamma }{ \to } \Hom_{ \mathcal{NT} } ( \mathrm{FK}_{X_{n}} ( \mathfrak{A} ) , \mathrm{FK}_{X_{n}} ( \mathfrak{B} )  \to 0
\end{align*}
is exact.  Consequently, if $\mathfrak{B}$ is in $\mathcal{B} ( X_{n} )$, then an isomorphism from $\mathrm{FK}_{X_{n}} ( \mathfrak{A} )$ to $\mathrm{FK}_{X_{n}} ( \mathfrak{B} )$ lifts to an invertible element in $\kk (X_{n}; \mathfrak{A} , \mathfrak{B} )$.
\end{theor}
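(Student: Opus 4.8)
The plan is to deduce this as a special case of the universal-coefficient machinery for homological algebra in triangulated categories, applied to the stable homological functor $\mathsf{FK}_{X_{n}}$ on the triangulated category $\mathfrak{KK}(X_{n})$. Write $\mathcal{I} = \Ker \mathsf{FK}_{X_{n}}$ for the associated homological ideal and regard $\mathsf{FK}_{X_{n}}$ as a functor into the abelian category $\mathcal{NT}\text{-}\mathrm{Mod}$ of ($\mathbb{Z}/2$-graded, countable) modules over the ring $\mathcal{NT}$ of natural transformations between the functors $\mathfrak{A} \mapsto K_{*}(\mathfrak{A}(Y))$, $Y \in \mathbb{LC}(X_{n})^{*}$. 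The first task is to verify that $\mathsf{FK}_{X_{n}}$ is the \emph{universal} $\mathcal{I}$-exact stable homological functor, i.e.\ that it identifies $\mathcal{NT}\text{-}\mathrm{Mod}$ with the target of the universal functor killing $\mathcal{I}$. Concretely, for each interval $Y = [a,b]$ one exhibits a separable ``representing'' $C^{*}$-algebra $R_{Y}$ over $X_{n}$ lying in the bootstrap class, together with a Yoneda-type natural isomorphism $\kk(X_{n}; R_{Y}, \mathfrak{B}) \cong K_{*}(\mathfrak{B}(Y))$; this makes the $R_{Y}$ into $\mathcal{I}$-projective objects whose images are projective generators of $\mathcal{NT}\text{-}\mathrm{Mod}$. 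Nuclearity and the hypothesis $\mathfrak{A} \in \mathcal{B}(X_{n})$ enter exactly here, guaranteeing that $\mathfrak{A}$ is built from such geometric projectives.

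The decisive step, and the one I expect to be the main obstacle, is a purely algebraic computation internal to $\mathcal{NT}\text{-}\mathrm{Mod}$. For the linearly ordered space $X_{n}$ one must determine the ring $\mathcal{NT}$ explicitly—it is generated by the maps $\iota_{*}, \pi_{*}, \partial_{*}$ of the six-term sequences subject to the exactness relations—and then prove that every module of the form $\mathsf{FK}_{X_{n}}(\mathfrak{A})$ has projective dimension at most one. The natural route is to take an arbitrary $\mathcal{NT}$-module $M$, choose a surjection $P_{0} \twoheadrightarrow M$ from a sum of the projective generators above, and show the kernel is again projective; this reduces to a finite combinatorial bookkeeping over the intervals $[a,b]$ driven by exactness of the six-term sequences. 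It is here that the special structure of $X_{n}$ is essential: the underlying quiver of $X_{n}$ is of linear type $A_{n}$ and hence behaves homologically like a hereditary algebra, forcing projective dimension $\le 1$. For general finite $X$ this fails and the clean two-term conclusion is genuinely lost.

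Granting projective dimension $\le 1$, I would lift a resolution $0 \to P_{1} \to P_{0} \to \mathsf{FK}_{X_{n}}(\mathfrak{A}) \to 0$ to an $\mathcal{I}$-exact triangle in $\mathfrak{KK}(X_{n})$ with $\mathcal{I}$-projective vertices (possible because the $R_{Y}$ are geometric) and feed it into the Meyer--Nest universal-coefficient spectral sequence $E_{2}^{p,q} = \Ext^{p}_{\mathcal{NT}}\bigl(\mathsf{FK}_{X_{n}}(\mathfrak{A}), \mathsf{FK}_{X_{n}}(\mathfrak{B})[q]\bigr) \Rightarrow \kk(X_{n}; \mathfrak{A}, \mathfrak{B})$ in total degree $p+q \bmod 2$. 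Since $E_{2}^{p,q} = 0$ for $p \ge 2$, only the columns $p = 0, 1$ survive, the sequence degenerates at $E_{2}$, and the two-step filtration of $\kk(X_{n}; \mathfrak{A}, \mathfrak{B})$ yields precisely the asserted short exact sequence: $\Gamma$ is the edge homomorphism induced by $\mathsf{FK}_{X_{n}}$ landing in $\Hom_{\mathcal{NT}}$, and $\delta$ is the boundary map with image the $\Ext^{1}_{\mathcal{NT}}$-term. Naturality of $\Gamma$ and $\delta$ is automatic from this construction.

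Finally, for the consequence I would argue as follows. Given an isomorphism $\alpha \colon \mathrm{FK}_{X_{n}}(\mathfrak{A}) \to \mathrm{FK}_{X_{n}}(\mathfrak{B})$, surjectivity of $\Gamma$ (now invoking $\mathfrak{B} \in \mathcal{B}(X_{n})$ as well) lets me choose $\kappa \in \kk(X_{n}; \mathfrak{A}, \mathfrak{B})$ and $\lambda \in \kk(X_{n}; \mathfrak{B}, \mathfrak{A})$ with $\Gamma(\kappa) = \alpha$ and $\Gamma(\lambda) = \alpha^{-1}$. Then $\Gamma(\lambda\kappa - \id) = 0$ and $\Gamma(\kappa\lambda - \id) = 0$, so both elements lie in $\Ker\Gamma = \im\delta$. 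Because the two-step filtration $0 \subseteq \Ker\Gamma \subseteq \kk(X_{n}; \mathfrak{A}, \mathfrak{A})$ is multiplicative under the composition product and its degree-$2$ piece corresponds to $\Ext^{2}_{\mathcal{NT}} = 0$, any product of two elements of $\Ker\Gamma$ vanishes; hence $(\lambda\kappa - \id)^{2} = 0$ and $(\kappa\lambda - \id)^{2} = 0$, making $\lambda\kappa$ and $\kappa\lambda$ invertible. Therefore $\kappa$ is an invertible element of $\kk(X_{n}; \mathfrak{A}, \mathfrak{B})$, which is exactly the stated lifting of filtrated $K$-theory isomorphisms.
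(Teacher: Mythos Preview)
The paper does not prove this theorem; it is stated with attribution to Bonkat and Meyer--Nest and used as a black box, so there is no argument in the paper to compare against. Your proposal is in fact a faithful outline of the Meyer--Nest proof: realise $\mathsf{FK}_{X_{n}}$ as the universal $\mathcal{I}$-exact stable homological functor via representing objects $R_{Y}$ for the intervals $Y\in\mathbb{LC}(X_{n})^{*}$, verify that the resulting $\mathcal{NT}$-module category is hereditary (this is exactly where the $A_{n}$ shape of $X_{n}$ enters), and read off the short exact sequence from the degenerating spectral sequence. The nilpotence argument you give for lifting isomorphisms is also the standard one.

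One small correction of emphasis: surjectivity of $\Gamma$ on $\kk(X_{n};\mathfrak{A},\mathfrak{B})$ already follows from $\mathfrak{A}\in\mathcal{B}(X_{n})$ alone. The hypothesis $\mathfrak{B}\in\mathcal{B}(X_{n})$ is needed not there but to obtain the UCT sequence for $\kk(X_{n};\mathfrak{B},\mathfrak{A})$, so that $\alpha^{-1}$ can be lifted to your $\lambda$.
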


\begin{corol}\label{c:uctX}
Let $\mathfrak{A}$ and $\mathfrak{B}$ be in $\mathcal{B} ( X_{n} )$.  Then an isomorphism from $K_{X_{n}} ( \mathfrak{A} )$ to $K_{X_{n}} ( \mathfrak{B} )$ lifts to an invertible element in $\kk (X_{n}; \mathfrak{A} , \mathfrak{B} )$.
\end{corol}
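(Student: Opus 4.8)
The plan is to translate the given $K_{X_n}$-isomorphism into the language of filtrated $K$-theory and then invoke the lifting consequence of Theorem~\ref{t:uctX}; the bridge between the two invariants is exactly Remark~\ref{filtrated}.

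First I would apply Remark~\ref{filtrated}: for the space $X_n$ the natural transformation $\mathrm{FK}_{X_n}(-) \to K_{X_n}(-)$ is an equivalence of functors. Hence the given isomorphism $\alpha : K_{X_n}(\mathfrak{A}) \to K_{X_n}(\mathfrak{B})$—that is, a family of graded group isomorphisms $\alpha_{Y,*}$ compatible with the natural transformations $\iota_*, \pi_*, \partial_*$—corresponds under this equivalence to a unique isomorphism $\tilde\alpha : \mathrm{FK}_{X_n}(\mathfrak{A}) \to \mathrm{FK}_{X_n}(\mathfrak{B})$ in the target category $\mathcal{NT}$ of Meyer-Nest. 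The verification here is routine: the equivalence identifies the structure maps recorded by $K_{X_n}$ with those of $\mathcal{NT}$, so compatibility of $\alpha$ with $\iota_*, \pi_*, \partial_*$ is precisely the statement that $\tilde\alpha$ is an isomorphism of modules over $\mathcal{NT}$.

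Next, since both $\mathfrak{A}$ and $\mathfrak{B}$ lie in $\mathcal{B}(X_n)$ by hypothesis, the consequence stated at the end of Theorem~\ref{t:uctX} applies to $\tilde\alpha$: an isomorphism $\mathrm{FK}_{X_n}(\mathfrak{A}) \to \mathrm{FK}_{X_n}(\mathfrak{B})$ lifts to an invertible element $\kappa \in \kk(X_n; \mathfrak{A}, \mathfrak{B})$. This $\kappa$ is the required lift, which completes the argument.

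The substantive content is concentrated entirely in Remark~\ref{filtrated}: one must know that for the linear space $X_n$ the finitely many six-term exact sequences recorded by $K_{X_n}$, one for each nested triple $U_1 \subseteq U_2 \subseteq U_3$ of open sets, carry the same information as the full filtrated $K$-theory—equivalently, that every indecomposable natural transformation in $\mathcal{NT}$ is generated by the inclusion, restriction, and boundary maps already present in $K_{X_n}$. Granting this comparison, which rests on the explicit computation of $\mathcal{NT}$ for $X_n$ in the Meyer-Nest work, the corollary follows in one line from the UCT of Theorem~\ref{t:uctX}.
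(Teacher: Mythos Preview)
Your proof is correct and follows exactly the same approach as the paper, which simply cites Remark~\ref{filtrated} and Theorem~\ref{t:uctX}. You have supplied additional explanatory detail about how the equivalence of functors transports the isomorphism, but the underlying argument is identical.
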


\begin{proof}
This follows from Remark~\ref{filtrated} and Theorem~\ref{t:uctX}.
\end{proof}

\begin{remar}
Let $x \in \kk ( X_{n} ; \mathfrak{A} , \mathfrak{B} )$ be an invertible element.  Then $K_{X_{n}} ( x )$ will denote the isomorphism from $K_{X_{n}} ( \mathfrak{A} )$ to $K_{X_{n}} ( \mathfrak{B} )$ given by $\Gamma ( x )$ where we have identified $K_{X_{n}} ( \mathfrak{A} )$ with $\mathrm{FK}_{X_{n}} ( \mathfrak{A} )$ and $K_{X_{n}} ( \mathfrak{B} )$ with $\mathrm{FK}_{X_{n}} ( \mathfrak{B} )$.
\end{remar}

\subsection{Functors}

We now define some functors that will be used throughout the rest of the paper.  Let $X$ and $Y$ be topological spaces.  For every continuous function $\ftn{f}{X}{Y}$ we have a functor
\begin{equation*}
\ftn{ f }{\catc(X) } { \catc( Y) }, \quad ( A , \psi ) \mapsto (A , f \circ \psi ). 
\end{equation*}  

\begin{itemize}
\item[(1)] Define $\ftn{ g_{ X }^{1} }{ X }{ X_{1} }$ by $g_{X}^{1} ( x )  = 1$.  Then $g_{X}^{1}$ is continuous.  Note that the induced functor $\ftn{ g_{X}^{1} }{ \catc(X) } { \catc( X_{1} ) }$ is the forgetful functor.

\item[(2)]  Let $U$ be an open subset of $X$.  Define $\ftn{ g_{U, X}^{ 2 }}{ X } { X_{2} }$ by $g_{U,X}^{2}(x) = 1$ if $x \notin U$ and $g_{U, X }^{ 2}(x) = 2$ if $x \in U$.  Then $g_{U,X }^{2}$ is continuous.  Thus the induced functor
\begin{equation*}
\ftn{ g_{ U, X}^{2} }{ \catc(X) } { \catc( X_{2} ) }
\end{equation*} 
is just specifying the extension $0 \to \mathfrak{A}(U) \to \mathfrak{A} \to \mathfrak{A} / \mathfrak{A}(U) \to 0$.

\item[(3)] We can generalize (2) to finitely many ideals.  Let $U_{1} \subseteq U_{2} \subseteq \cdots \subseteq U_{n} = X$ be open subsets of $X$.  Define $\ftn{g_{ U_{1} , U_{2} , \dots, U_{n} , X }^{n} }{ X }{ X_{n} }$ by $ g_{ U_{1} , U_{2} , \dots, U_{n} , X }^{n}  (x) = n-k+1$ if $x \in U_{k} \setminus U_{k-1}$.  Then $g_{ U_{1} , U_{2} , \dots, U_{n} , X }^{n}$ is continuous.  Therefore, any $C^{*}$-algebra with ideals $0 \unlhd \mathfrak{I}_{1} \unlhd \mathfrak{I}_{2} \unlhd \cdots \mathfrak{I}_{n} = \mathfrak{A}$ can be made into a $C^{*}$-algebra over $X_{n}$.

\item[(4)]  For all $Y \in \mathbb{LC} ( X )$, $\ftn{ r_{X}^{Y} }{ \catc(X) }{ \catc(Y) }$ is the restriction functor defined in Definition~2.19 of \cite{rmrn:bootstrap}

\item[(5)] If $\ftn{ f }{ X }{ Y }$ is an embedding of a subset with the subspace topology, we write
\begin{align*}
\ftn{i_{X}^{Y} = f_{*} }{ \catc(X) }{ \catc(Y) }.
\end{align*}
\end{itemize}
By Proposition~3.4 of \cite{rmrn:bootstrap}, the functors defined above induce functors from $\kkc( X )$ to $\kkc ( Z)$, where $Z = Y, X_{1}, X_{n}$.
 
 \subsection{Graph $C^{*}$-algebras}
 
 A \emph{graph}\index{graph} $(E^0, E^1, r, s)$ consists of a countable set $E^0$ of vertices, a countable set $E^1$ of edges, and maps $r : E^1 \to E^0$ and $s : E^1 \to E^0$ identifying the range and source of each edge.  If $E$ is a graph, the \emph{graph $C^*$-algebra} $C^*(E)$ is the universal $C^{*}$-algebra generated by mutually orthogonal projections $\{ p_v
: v \in E^0 \}$ and partial isometries $\{ s_e : e \in E^1 \}$ with mutually orthogonal
ranges satisfying
\begin{enumerate}
\item $s_e^* s_e = p_{r(e)}$ \quad  for all $e \in E^1$
\item $s_es_e^* \leq p_{s(e)}$ \quad for all $e \in E^1$
\item $p_v = \sum_{\{ e \in E^1 : s(e) = v \}} s_es_e^* $ \quad for all $v$ with $0 < | s^{-1}(v) | < \infty$.
\end{enumerate}

\section{Meta-theorems}\label{s:meta}
In many cases one can obtain a classification result for a class of unital $C^{*}$-algebras $\mathcal{C}$ by obtaining a classification result for the class $\mathcal{C} \otimes \K$, where each object in $\mathcal{C} \otimes \K$ is the stabilization of an object in $\mathcal{C}$.  A meta-theorem of this sort was proved by the first and second named authors in \cite[Theorem~11]{segr:extpurely}.  It was shown there that if $\mathcal{C}$ is a subcategory of the category of $C^{*}$-algebras, $\catc$, and if $F$ is a functor from $\mathcal{C}$ to an abelian category such that an isomorphism $F( \mathfrak{A} \otimes \K ) \cong F( \mathfrak{B} \otimes \K )$ lifts to an isomorphism in $\mathfrak{A} \otimes \K \cong \mathfrak{B} \otimes \K$, then under suitable conditions, we have that $F( \mathfrak{A} ) \cong F( \mathfrak{B} )$ implies $\mathfrak{A} \cong \mathfrak{B}$.  In \cite{gr:ckalg}, the second and third named authors improved this result by showing that the isomorphism $F( \mathfrak{A} ) \cong F( \mathfrak{B} )$ lifts to an isomorphism from $\mathfrak{A}$ to $\mathfrak{B}$.  

In this section, we improve these results in order to deal with cases when $\mathcal{C}$ is a category (not necessarily a subcategory of $\catc$) and there exists a functor from $\mathcal{C}$ to $\catc$.  An example of such a category is the category of $C^{*}$-algebras over $\{1,2\}$, where $\{ 1 , 2 \}$ is given the discrete topology.  Then $\mathcal{C}$ is not a subcategory of $\catc$ but the forgetful functor (forgetting the $\{1,2\}$-structure) is a functor from $\mathcal{C}$ to $\catc$.  We also replace the condition of proper pure infiniteness by the stable weak cancellation property. 

\begin{defin}
A $C^{*}$-algebra $\mathfrak{A}$ is said to have the \emph{weak cancellation property} if $p$ is Murray-von Neumann equivalent to $q$ whenever $p$ and $q$ generate the same ideal $\mathfrak{I}$ and $[ p ] = [q]$ in $K_{0} ( \mathfrak{I} )$.  A $C^{*}$-algebra is said to have the \emph{stable weak cancellation property} if $\mathsf{M}_{n} ( \mathfrak{A} )$ has the weak cancellation property for all $n \in \N$.
\end{defin}

\begin{theor}(cf.~\cite[Theorem~11]{segr:extpurely})\label{t:metaiso}
Let $\mathcal{C}$ and $\mathcal{D}$ be categories, let $\catc$ be the category of $C^{*}$-algebras, and let $\ab$ be the category of abelian groups.  Suppose we have covariant functors $\ftn{ F }{ \mathcal{C} }{ \catc }$, $\ftn{ G }{ \mathcal{C} }{ \mathcal{D} }$, and $\ftn{H}{ \mathcal{D} }{ \ab}$ such that 
\begin{enumerate}
\item $H \circ G = K_{0} \circ F$.

\item For objects $\mathfrak{A}$ in $\mathcal{C}$, there exist an object $\mathfrak{A}_{\K}$ and a morphism $\ftn{ \kappa_{ \mathfrak{A} } }{ \mathfrak{A} }{ \mathfrak{A}_{\K} }$ such that $G( \kappa_{ \mathfrak{A} } )$ is an isomorphism in $\mathcal{D}$, $F( \mathfrak{A}_{\K} ) = F( \mathfrak{A} ) \otimes \K$, and $F( \kappa_{ \mathfrak{A} } ) = \id_{ F( \mathfrak{A} ) } \otimes e_{11}$. 

\item For all objects $\mathfrak{A}$ and $\mathfrak{B}$ in $\mathcal{C}$, every isomorphism $G( \mathfrak{A}_{\K} )$ to $G( \mathfrak{B}_{\K} )$ is induced by an isomorphism from $\mathfrak{A}_{\K}$ to $\mathfrak{B}_{\K}$.

\end{enumerate}
Let $\mathfrak{A}$ and $\mathfrak{B}$ be given such that $F(\mathfrak{A})$ and $F( \mathfrak{B} )$ are unital $C^{*}$-algebras.  Let $\ftn{ \rho }{ G( \mathfrak{A} )  }{ G( \mathfrak{B} )}$ be an isomorphism such that $H( \rho ) ( [ 1_{F(\mathfrak{A}) } ] ) = [ 1_{ F( \mathfrak{B} ) } ]$.  If $F(\mathfrak{B})$ has the stable weak cancellation property, then $F( \mathfrak{A} ) \cong F( \mathfrak{B} )$.
\end{theor}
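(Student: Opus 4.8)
The plan is to lift $\rho$ to the stabilized objects, push it through $F$, and then cut back down to the original unital algebras by a corner argument. First I would form the isomorphism
\[
\rho_{\K} := G(\kappa_{\mathfrak{B}}) \circ \rho \circ G(\kappa_{\mathfrak{A}})^{-1} \colon G(\mathfrak{A}_{\K}) \to G(\mathfrak{B}_{\K}),
\]
which makes sense because condition~(2) guarantees that $G(\kappa_{\mathfrak{A}})$ and $G(\kappa_{\mathfrak{B}})$ are isomorphisms in $\mathcal{D}$. By condition~(3) there is an isomorphism $\Phi \colon \mathfrak{A}_{\K} \to \mathfrak{B}_{\K}$ inducing $\rho_{\K}$, that is, $G(\Phi) = \rho_{\K}$. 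Applying the functor $F$ and invoking the identity $F(\mathfrak{A}_{\K}) = F(\mathfrak{A}) \otimes \K$ from condition~(2), I obtain a \stariso
\[
F(\Phi) \colon F(\mathfrak{A}) \otimes \K \to F(\mathfrak{B}) \otimes \K.
\]

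Write $p = 1_{F(\mathfrak{A})} \otimes e_{11}$ and $q = 1_{F(\mathfrak{B})} \otimes e_{11}$; by condition~(2) these are precisely $F(\kappa_{\mathfrak{A}})(1_{F(\mathfrak{A})})$ and $F(\kappa_{\mathfrak{B}})(1_{F(\mathfrak{B})})$. The heart of the argument will be to show that $F(\Phi)(p)$ and $q$ are Murray--von Neumann equivalent in $F(\mathfrak{B}) \otimes \K$, for then conjugating by an implementing partial isometry and restricting $F(\Phi)$ to corners yields
\[
F(\mathfrak{A}) \cong p\,(F(\mathfrak{A}) \otimes \K)\,p \xrightarrow{\ F(\Phi)\ } F(\Phi)(p)\,(F(\mathfrak{B})\otimes\K)\,F(\Phi)(p) \cong q\,(F(\mathfrak{B})\otimes\K)\,q \cong F(\mathfrak{B}),
\]
which is the desired conclusion.

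I would establish the equivalence in two steps. First, comparing $K_{0}$-classes: since $K_{0} \circ F = H \circ G$ by condition~(1), and $F(\kappa_{\mathfrak{A}}) = \id \otimes e_{11}$ sends $[1_{F(\mathfrak{A})}]$ to $[p]$, a direct chase through the definition of $\rho_{\K}$ gives
\[
K_{0}(F(\Phi))([p]) = H(G(\kappa_{\mathfrak{B}})) \circ H(\rho)\,([1_{F(\mathfrak{A})}]) = H(G(\kappa_{\mathfrak{B}}))([1_{F(\mathfrak{B})}]) = [q],
\]
where the middle equality is exactly the hypothesis $H(\rho)([1_{F(\mathfrak{A})}]) = [1_{F(\mathfrak{B})}]$. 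Second, fullness: because $F(\mathfrak{A})$ and $F(\mathfrak{B})$ are unital, $p$ and $q$ are full projections in their respective stabilizations, so $F(\Phi)(p)$ and $q$ are both full in $F(\mathfrak{B})\otimes\K$ and hence generate the same (whole) ideal.

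The main obstacle is passing from \emph{equal $K_{0}$-class and same generated ideal} to an honest Murray--von Neumann equivalence, which is where the stable weak cancellation hypothesis enters. I would record the auxiliary fact that stable weak cancellation of $F(\mathfrak{B})$ forces weak cancellation for full projections in $F(\mathfrak{B}) \otimes \K$: any projection in the stabilization is Murray--von Neumann equivalent to one in some $\mathsf{M}_{n}(F(\mathfrak{B}))$, fullness and $K_{0}$-class are preserved under this replacement and under the compatible identifications of the ideal lattices of $\mathsf{M}_{n}(F(\mathfrak{B}))$, $F(\mathfrak{B})\otimes\K$, and $F(\mathfrak{B})$, so the weak cancellation of $\mathsf{M}_{n}(F(\mathfrak{B}))$ applies and transports back. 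Applying this to $F(\Phi)(p)$ and $q$ gives the equivalence and completes the corner argument above.
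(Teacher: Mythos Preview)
Your argument is correct and follows essentially the same route as the paper: lift $\rho$ to $G(\mathfrak{A}_{\K}) \to G(\mathfrak{B}_{\K})$ via the $G(\kappa)$'s, realize it by an isomorphism $\Phi$ on the stabilized objects using (3), apply $F$, then use $H\circ G = K_{0}\circ F$ and the unit hypothesis to match the $K_{0}$-classes of $F(\Phi)(p)$ and $q$, invoke stable weak cancellation on the full projections, and finish with a corner/compression argument. Your extra paragraph justifying why stable weak cancellation of $F(\mathfrak{B})$ passes to full projections in $F(\mathfrak{B})\otimes\K$ is a welcome elaboration that the paper leaves implicit.
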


\begin{proof}
Note that $G( \kappa_{ \mathfrak{A} } )$ and $G( \kappa_{ \mathfrak{B} } )$ are isomorphisms.  Therefore $G( \kappa_{ \mathfrak{B} } ) \circ \rho \circ G( \kappa_{ \mathfrak{A} } )^{-1}$ is an isomorphism from $G( \mathfrak{A}_{ \K } )$ to $G( \mathfrak{B}_{\K} )$.  Thus, there exists an isomorphism $\ftn{ \phi }{ \mathfrak{A}_{\K} }{ \mathfrak{B}_{\K} }$ such that $G( \phi ) = G( \kappa_{ \mathfrak{B} } ) \circ \rho \circ G( \kappa_{ \mathfrak{A} } )^{-1}$.  

Set $\psi = F( \phi )$.  Then $\ftn{ \psi }{ F( \mathfrak{A} )\otimes \K }{ F( \mathfrak{B} ) \otimes \K }$ is a $*$-isomorphism such that
\begin{align*}
K_{0} ( \psi ) &= K_{0} ( F ( \phi ) ) = H( G( \kappa_{ \mathfrak{B} } ) \circ \rho \circ G( \kappa_{ \mathfrak{A} } )^{-1} )  =  H( G( \kappa_{ \mathfrak{B} }) ) \circ H( \rho ) \circ H( G( \kappa_{ \mathfrak{A} } )^{-1} ) \\
&= K_{0} ( F( \kappa_{\mathfrak{B} } ) ) \circ H( \rho )  \circ K_{0} ( F( \kappa_{\mathfrak{A} } ) )^{-1} = K_{0} ( \id_{ F( \mathfrak{B} )} \otimes e_{11} ) \circ H( \rho ) \circ K_{0} ( \id_{ F( \mathfrak{A} )} \otimes e_{11} )^{-1}.
\end{align*}
Hence, 
\begin{align*}
K_{0} ( \psi )( [ 1_{F( \mathfrak{A} ) } \otimes e_{11} ] ) &= K_{0} ( \id_{ F( \mathfrak{B} )} \otimes e_{11} )  \circ H( \rho ) \circ K_{0} ( \id_{ F( \mathfrak{A} )} \otimes e_{11} )^{-1} (  [ 1_{F( \mathfrak{A} ) } \otimes e_{11} ] )  \\
	&=  K_{0} ( \id_{ F( \mathfrak{B} )} \otimes e_{11} ) \circ H( \rho ) ( [ 1_{ F( \mathfrak{A} ) } ] ) \\
	&= K_{0} ( \id_{ F( \mathfrak{B} ) } \otimes e_{11} )( [ 1_{ F( \mathfrak{B} ) } ] ) \\
	&= [1_{ F( \mathfrak{B} ) } \otimes e_{11} ].
\end{align*}

Stable weak cancellation implies that there exists $v \in F( \mathfrak{B} ) \otimes \K$ such that $v^{*} v = \psi ( 1_{ F( \mathfrak{A} ) \otimes e_{11} } )$ and $vv^{*} = 1_{ F( \mathfrak{B} ) } \otimes e_{11}$ since $\psi ( 1_{ F( \mathfrak{A} ) } \otimes e_{11} )$ and $1_{ F( \mathfrak{B} ) } \otimes e_{11}$ are full projections in $F( \mathfrak{B} )\otimes \K$.  Set $\gamma (x) = v \psi ( x \otimes e_{11} ) v^{*}$.  Arguing as in the proof of \cite[Theorem~11]{segr:extpurely}, $\gamma$ is an isomorphism from $F( \mathfrak{A} ) \otimes e_{11}$ to $F( \mathfrak{B} ) \otimes e_{11}$.  Hence, $F( \mathfrak{A} ) \cong F( \mathfrak{B} )$.  
\end{proof}

\begin{theor}(cf.~\cite[Theorem~2.1]{rr_cexpure}) \label{t:metathm}
Let $\mathcal{C}$ be a subcategory of $\catc(X)$.  Moreover, $\mathcal{C}$ is assumed to be closed under tensoring by $\mathsf{M}_{2} ( \C )$ and $\K$ and contains the canonical embeddings $\ftn{ \kappa_{1} }{ \mathfrak{A} }{ \mathsf{M}_{2} ( \mathfrak{A} ) }$ and $\ftn{ \kappa }{ \mathfrak{A} }{ \mathfrak{A} \otimes \K }$ as morphisms for every object $\mathfrak{A}$ in $\mathcal{C}$.  Assume there is a functor $\ftn{\mathsf{F}}{ \mathcal{C} }{ \mathcal{D} }$ satisfying 
\begin{itemize}
\item[(1)] For $\mathfrak{A}$ in $\mathcal{C}$, the embeddings $\ftn{ \kappa_{1} }{ \mathfrak{A} }{ \mathsf{M}_{2} ( \mathfrak{A} ) }$ and $\ftn{ \kappa }{ \mathfrak{A} }{ \mathfrak{A} \otimes \K }$ induce isomorphisms $\mathsf{F} ( \kappa_{1} )$ and $\mathsf{F} ( \kappa )$.

\item[(2)] For all objects $\mathfrak{A}$ and $\mathfrak{B}$ in $\mathcal{C}$ that are stable $C^{*}$-algebras, every isomorphism from $\mathsf{F} ( \mathfrak{A} )$ to $\mathsf{F} ( \mathfrak{B} )$ is induced by an isomorphism from $\mathfrak{A}$ to $\mathfrak{B}$.

\item[(3)] There exists a functor $\mathsf{G}$ from $\mathcal{D}$ to $\ab$ such that $\mathsf{G} \circ \mathsf{F} = K_{0}$.
\end{itemize}
Assume that every $X$-equivariant isomorphism between objects in $\mathcal{C}$ is a morphism in $\mathcal{C}$ and that for objects $\mathfrak{A}$ in $\mathcal{C}$, $\mathsf{F} ( \mathrm{Ad} ( u ) \vert_{\mathfrak{A} } ) = \id_{ \mathsf{F} (\mathfrak{A} ) }$ for every unitary $u \in \multialg{ \mathfrak{A} }$.  If $\mathfrak{A}$ and $\mathfrak{B}$ are objects $\mathcal{C}$ that are unital $C^{*}$-algebras such that $\mathfrak{A}$ and $\mathfrak{B}$ have the stable weak cancellation property and there is an isomorphism $\ftn{ \alpha }{ \mathsf{F} ( \mathfrak{A} ) }{ \mathsf{F} ( \mathfrak{B} ) }$ such that $\mathsf{G} ( \alpha ) ( [ 1_{ \mathfrak{A} } ] ) = [ 1_{ \mathfrak{B} } ]$, then there exists an isomorphism $\ftn{ \phi }{ \mathfrak{A} }{ \mathfrak{B} }$ in $\mathcal{C}$ such that $\mathsf{F} ( \phi ) = \alpha$.
\end{theor}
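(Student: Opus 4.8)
The plan is to reduce to the stable case handled by condition~(2) and then descend from the stabilizations back to $\mathfrak{A}$ and $\mathfrak{B}$ by cutting down with a full projection, keeping track of $\mathsf{F}$ at every step so that the resulting isomorphism induces \emph{exactly} $\alpha$ rather than merely some isomorphism. First I would stabilize. Writing $\ftn{\kappa_{\mathfrak{A}}}{\mathfrak{A}}{\mathfrak{A} \otimes \K}$ and $\ftn{\kappa_{\mathfrak{B}}}{\mathfrak{B}}{\mathfrak{B} \otimes \K}$ for the canonical embeddings $a \mapsto a \otimes e_{11}$, condition~(1) makes $\mathsf{F}(\kappa_{\mathfrak{A}})$ and $\mathsf{F}(\kappa_{\mathfrak{B}})$ isomorphisms, so
\[
\beta := \mathsf{F}(\kappa_{\mathfrak{B}}) \circ \alpha \circ \mathsf{F}(\kappa_{\mathfrak{A}})^{-1} \colon \mathsf{F}(\mathfrak{A} \otimes \K) \to \mathsf{F}(\mathfrak{B} \otimes \K)
\]
is an isomorphism. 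Since $\mathfrak{A} \otimes \K$ and $\mathfrak{B} \otimes \K$ are stable objects of $\mathcal{C}$, condition~(2) yields an isomorphism $\ftn{\Phi}{\mathfrak{A} \otimes \K}{\mathfrak{B} \otimes \K}$ with $\mathsf{F}(\Phi) = \beta$. Applying $\mathsf{G}$ and using $\mathsf{G} \circ \mathsf{F} = K_{0}$, together with $\mathsf{G}(\alpha)([1_{\mathfrak{A}}]) = [1_{\mathfrak{B}}]$ and $K_{0}(\kappa_{\mathfrak{A}})([1_{\mathfrak{A}}]) = [1_{\mathfrak{A}} \otimes e_{11}]$, a short diagram chase gives
\[
[\Phi(1_{\mathfrak{A}} \otimes e_{11})] = K_{0}(\Phi)([1_{\mathfrak{A}} \otimes e_{11}]) = [1_{\mathfrak{B}} \otimes e_{11}] \quad \text{in } K_{0}(\mathfrak{B} \otimes \K).
\]

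Next I would descend. Both $p := \Phi(1_{\mathfrak{A}} \otimes e_{11})$ and $q := 1_{\mathfrak{B}} \otimes e_{11}$ are full projections in $\mathfrak{B} \otimes \K$ (as $\mathfrak{A},\mathfrak{B}$ are unital and $\Phi$ is an isomorphism), hence generate the same ideal, namely all of $\mathfrak{B} \otimes \K$. Since $\mathfrak{B}$ has the stable weak cancellation property, $\mathsf{M}_{n}(\mathfrak{B})$ has weak cancellation for each $n$, and because any two such projections can be taken in some $\mathsf{M}_{n}(\mathfrak{B})$, the equality $[p] = [q]$ furnishes a partial isometry $v$ with $v^{*}v = p$ and $vv^{*} = q$. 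Because $\mathfrak{B} \otimes \K$ is stable, the complements $1 - p$ and $1 - q$ in $\multialg{\mathfrak{B} \otimes \K}$ are each Murray--von Neumann equivalent to $1$, hence to each other, so $v$ extends to a unitary $U \in \multialg{\mathfrak{B} \otimes \K}$ with $Up = v$; consequently $\mathrm{Ad}(U)$ restricts on the corner $p(\mathfrak{B} \otimes \K)p$ to $\mathrm{Ad}(v)$. I would then define $\ftn{\phi}{\mathfrak{A}}{\mathfrak{B}}$ by $\phi(a) \otimes e_{11} = U\,\Phi(a \otimes e_{11})\,U^{*}$, which is well-defined since the right-hand side lies in the corner $q(\mathfrak{B} \otimes \K)q = \mathfrak{B} \otimes e_{11}$, and which is an isomorphism because $\Phi$ and $\mathrm{Ad}(U)$ are isomorphisms of the relevant corners.

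Finally I would verify that $\phi$ does the job. Conjugation by a multiplier unitary preserves ideals and $\Phi$ is $X$-equivariant, so $\phi$ is an $X$-equivariant isomorphism; by hypothesis it is therefore a morphism in $\mathcal{C}$, as is $\mathrm{Ad}(U)\vert_{\mathfrak{B} \otimes \K}$. By construction $\kappa_{\mathfrak{B}} \circ \phi = (\mathrm{Ad}(U)\vert_{\mathfrak{B} \otimes \K}) \circ \Phi \circ \kappa_{\mathfrak{A}}$, so applying $\mathsf{F}$ and using the hypothesis $\mathsf{F}(\mathrm{Ad}(U)\vert_{\mathfrak{B} \otimes \K}) = \id$ gives $\mathsf{F}(\kappa_{\mathfrak{B}}) \circ \mathsf{F}(\phi) = \beta \circ \mathsf{F}(\kappa_{\mathfrak{A}}) = \mathsf{F}(\kappa_{\mathfrak{B}}) \circ \alpha$; cancelling the isomorphism $\mathsf{F}(\kappa_{\mathfrak{B}})$ yields $\mathsf{F}(\phi) = \alpha$, as required. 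The main obstacle is exactly this descent step: one must produce a genuine multiplier \emph{unitary} $U$ whose adjoint action simultaneously implements the cut-down from the stabilization and is invisible to $\mathsf{F}$ (through the $\mathrm{Ad}(u)$-hypothesis), since it is this that upgrades the weak conclusion $\mathsf{F}(\mathfrak{A}) \cong \mathsf{F}(\mathfrak{B})$ of Theorem~\ref{t:metaiso} to the strong statement $\mathsf{F}(\phi) = \alpha$. By contrast, the equivalence $p \sim q$ is routine once fullness reduces it to the hypothesis on $K_{0}$-classes and stability supplies the extension of $v$ to $U$.
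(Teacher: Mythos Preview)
Your argument is correct and follows the same strategy as the paper, which simply points to \cite[Theorem~2.1]{rr_cexpure} and notes that stable weak cancellation replaces proper infiniteness in securing the Murray--von Neumann equivalence $\Phi(1_{\mathfrak{A}}\otimes e_{11})\sim 1_{\mathfrak{B}}\otimes e_{11}$; you have written out in full the stabilize--lift--cut-down argument that the paper leaves to that reference. One small remark: you never invoke the $\mathsf{M}_2(\C)$--closure hypothesis or $\kappa_1$, instead extending the partial isometry $v$ to a multiplier unitary directly via $1-p\sim 1\sim 1-q$ in $\multialg{\mathfrak{B}\otimes\K}$; this is legitimate (both complements are properly infinite full projections with the same $K_0$-class), whereas the cited proof uses the $\mathsf{M}_2$ trick to manufacture the unitary, which is why that hypothesis appears in the statement.
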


\begin{proof}
The difference between the statement of Theorem~2.1 of \cite{rr_cexpure} and statement of the theorem are 
\begin{itemize}
\item[(i)] $\mathcal{C}$ is assumed to be a subcategory of $\catc(X)$ instead of a subcategory of $\catc$. 

\item[(ii)] $\mathfrak{A}$ and $\mathfrak{B}$ are assumed to have the stable weak cancellation property instead of being properly infinite.
\end{itemize}
In the proof of Theorem~2.1 of \cite{rr_cexpure}, properly infinite was needed to insure that $\psi ( 1_{ \mathfrak{A} } \otimes e_{11} )$ is Murray-von Neumann equivalent to $1_{ \mathfrak{B} } \otimes e_{11}$, where $\ftn{ \psi }{ \mathfrak{A} \otimes \K }{ \mathfrak{B} \otimes \K }$ is the isomorphism from (2) that lifts the isomorphism from $\mathsf{F} ( \mathfrak{A} )$ to $\mathsf{F} ( \mathfrak{B} )$ that is induced by $\alpha$.  As in the proof of Theorem~\ref{t:metaiso}, we get that $\psi ( 1_{ \mathfrak{A} } \otimes e_{11} )$ is Murray-von Neumann equivalent to $1_{ \mathfrak{B} } \otimes e_{11}$.  Arguing as in the proof of Theorem~2.1 of \cite{rr_cexpure}, we get the desired result.

\end{proof}

\section{Classification results}
In this section, we show that $K_{X_{2}}^{+} ( - )$ is a strong classification functor for a class of $C^{*}$-algebras with exactly one proper nontrivial ideal containing $C^{*}$-algebras associated to finite graphs.  The results of this section will be used in the next section to show that $K_{X_{2}}^{+} ( - )$ together with the appropriate scale is a complete isomorphism invariant for $C^{*}$-algebras associated to graphs.  Moreover, in a forthcoming paper, we use these results to solve the following \emph{extension problem}: If $\mathfrak{A}$ fits into the following exact sequence
\begin{align*}
0 \to C^{*} ( E ) \otimes \K \to \mathfrak{A} \to C^{*} ( G) \to 0,
\end{align*}
where $C^{*} (E)$ and $C^{*} (G)$ are simple $C^{*}$-algebras, then when is $\mathfrak{A} \cong C^{*} (F)$ for some graph $F$?

\begin{theor}(Existence Theorem)\label{t:exist}
Let $\mathfrak{A}_{1}$ and $\mathfrak{A}_{2}$ be in $\mathcal{B}( X_{2})$ and let $x \in \kk( X_{2}; \mathfrak{A}_{1} , \mathfrak{A}_{2} )$ be an invertible element such that $\Gamma(x)_{Y}$ is a positive isomorphism for all $Y \in \mathbb{LC}(X_{2})$.  Suppose $0 \to \mathfrak{A}_{i}[2]   \to \mathfrak{A}_{i}  \to \mathfrak{A}_{i} [1]  \to 0$ is a full extension, $\mathfrak{A}_{i} [2]$ is a stable $C^{*}$-algebra, $\mathfrak{A}_{i}$ is a nuclear $C^{*}$-algebra with real rank zero, and either
\begin{itemize}
\item[(i)] $\mathfrak{A}_{i} [ 2 ]$ is a purely infinite simple $C^{*}$-algebra and $\mathfrak{A}_{i} [ 1]$ is an AF-algebra; or

\item[(ii)] $\mathfrak{A}_{i} [ 2 ]$ is an AF-algebra and $\mathfrak{A}_{i} [ 1 ]$ is a purely infinite simple $C^{*}$-algebra.
\end{itemize}
Then there exists an $X_{2}$-equivariant homomorphism $\ftn{ \phi }{ \mathfrak{A}_{1} \otimes \K }{ \mathfrak{A}_{2} \otimes \K }$ such that $\kk (X_{2};  \phi ) = \kk( X_{2} ; \id_{ \mathfrak{A}_{1} } \otimes e_{11} )^{-1} \times x \times \kk( X_{n} ; \id_{ \mathfrak{A}_{2} } \otimes e_{11} )$, and $\phi_{\{ 2\} }$ and $\phi_{\{1\}}$ are injective, where $\{ e_{ij} \}$ is a system of matrix units for $\K$.
\end{theor}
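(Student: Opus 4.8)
The aim is to realize the prescribed invertible class by a genuine $X_2$-equivariant \starhom{} between the stabilizations. Write $\kappa_i = \id_{\mathfrak{A}_i} \otimes e_{11}$ for the corner embeddings; these are $\kk(X_2)$-equivalences, so $y := \kk(X_2; \kappa_1)^{-1} \times x \times \kk(X_2; \kappa_2)$ is an invertible element of $\kk(X_2; \mathfrak{A}_1 \otimes \K, \mathfrak{A}_2 \otimes \K)$ whose associated $K_{X_2}$-isomorphism is positive on every $Y \in \mathbb{LC}(X_2)$. It therefore suffices to produce an $X_2$-equivariant \starhom{} $\phi$ with $\kk(X_2; \phi) = y$ and with $\phi_{\{1\}}, \phi_{\{2\}}$ injective, since the displayed identity is then just the definition of $y$. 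Throughout I work with the stabilizations, so that the ideals $\mathfrak{A}_i[2] \otimes \K$ stay stable and the extensions $0 \to \mathfrak{A}_i[2] \otimes \K \to \mathfrak{A}_i \otimes \K \to \mathfrak{A}_i[1] \otimes \K \to 0$ remain full.

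First I would realize the two subquotient components of $y$ by honest injective \starhoms. The restriction functors $r_{X_2}^{\{1\}}$ and $r_{X_2}^{\{2\}}$ send $y$ to invertible elements $y_1 \in \kk(\mathfrak{A}_1[1] \otimes \K, \mathfrak{A}_2[1] \otimes \K)$ and $y_2 \in \kk(\mathfrak{A}_1[2] \otimes \K, \mathfrak{A}_2[2] \otimes \K)$, each inducing a positive $K_0$-isomorphism. Both subquotients are simple, stable, nuclear, and lie in the bootstrap class, and each is either purely infinite or AF. In the purely infinite case the Kirchberg--Phillips theorem produces an injective \starhom{} realizing $y_j$, while in the AF case the positivity of the $K_0$-map together with Elliott's classification lifts $y_j$ to an injective \starhom. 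This yields \starhoms $\phi_1$ on the quotient and $\phi_2$ on the ideal, and it is precisely here --- on the finite (AF) subquotient --- that the hypothesis that $\Gamma(x)_{Y}$ is positive for all $Y$ is used.

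The main step is to glue $\phi_1$ and $\phi_2$ into a single $X_2$-equivariant map realizing $y$. I would pass to the Busby picture, encoding each extension by an invariant $\tau_i : \mathfrak{A}_i[1] \otimes \K \to \corona{\mathfrak{A}_i[2] \otimes \K}$. Since the ideals are stable and the maps may be taken nondegenerate, $\phi_2$ extends to the multiplier algebras and descends to $\overline{\phi_2} : \corona{\mathfrak{A}_1[2] \otimes \K} \to \corona{\mathfrak{A}_2[2] \otimes \K}$, and an $X_2$-equivariant lift of the pair $(\phi_1,\phi_2)$ exists exactly when $\overline{\phi_2} \circ \tau_1$ and $\tau_2 \circ \phi_1$ are unitarily equivalent in the corona. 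That these two full extensions of $\mathfrak{A}_1[1] \otimes \K$ by $\mathfrak{A}_2[2] \otimes \K$ represent the same class is forced by $y$ lying in $\kk(X_2)$, which retains the connecting data of the six-term sequence through the $\Ext$-term of the UCT of Theorem~\ref{t:uctX}. The crux is to promote this agreement of $\Ext$-classes to a genuine unitary equivalence; for this I would invoke an absorption and uniqueness theorem for full nuclear extensions with stable ideal of Elliott--Kucerovsky/Kirchberg type, whose hypotheses hold because the extensions are full, one subquotient is purely infinite simple, and every algebra has real rank zero. Conjugating $\phi_1$ by the resulting unitary makes the Busby square commute on the nose and produces $\phi$.

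It then remains to verify the bookkeeping. The restrictions $\phi_{\{1\}}$ and $\phi_{\{2\}}$ agree with $\phi_1$ and $\phi_2$ up to conjugation by a unitary, hence are injective. To see $\kk(X_2; \phi) = y$, note that by construction $\phi$ induces $y_1$ and $y_2$ on the subquotients and carries the correct connecting maps, so $\Gamma(\kk(X_2;\phi)) = \Gamma(y)$ while the residual freedom in the $\Ext$-term of the UCT sequence is pinned down by the matched Busby invariants; thus the two classes coincide, and unwinding the definition of $y$ gives the asserted formula. I expect the gluing step --- converting the agreement of the Busby invariants in $\Ext$ into an actual unitary equivalence via an absorption theorem --- to be the principal obstacle, since it is there that fullness, stability of the ideal, real rank zero, and the purely infinite/AF dichotomy are simultaneously required.
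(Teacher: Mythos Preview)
Your approach tracks the paper's closely through the first two steps: you pass to the stabilizations, restrict $y$ to the two subquotients, and lift $y_1,y_2$ to honest isomorphisms $\phi_1,\phi_2$ via Elliott and Kirchberg--Phillips. The gluing via Busby invariants is also what the paper does (citing Lemma~4.5 of \cite{segrer:ccfis}) to produce an $X_2$-equivariant isomorphism $\psi$ with $\kk(\psi_{\{j\}}) = r_{X_2}^{\{j\}}(y)$.

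The gap is in your last paragraph. You assert that once $\Gamma(\kk(X_2;\phi)) = \Gamma(y)$, the ``residual freedom in the $\Ext$-term of the UCT sequence is pinned down by the matched Busby invariants.'' This is not correct: the Busby compatibility $[\tau_1]\times y_2 = y_1\times[\tau_2]$ lives in $\kk^1(\mathfrak{A}_1[1]\otimes\K,\mathfrak{A}_2[2]\otimes\K)$ and is already part of the filtrated $K$-theory data encoded by $\Gamma$; it carries no information beyond $\Gamma$ and so cannot resolve the $\Ext^1_{\mathcal{NT}}$ ambiguity. Concretely, the difference $y - \kk(X_2;\psi)$ restricts to zero on $\{1\}$ and maps to zero in $\kk(X_2;\mathfrak{A}_1\otimes\K,i_{\{1\}}^{X_2}(\mathfrak{A}_2[1]\otimes\K))$, hence by the six-term sequence in $\kk(X_2)$ it comes from some $z$ in $\kk(X_2;\mathfrak{A}_1\otimes\K,i_{\{2\}}^{X_2}(\mathfrak{A}_2[2]\otimes\K)) \cong \kk(\mathfrak{A}_1\otimes\K,\mathfrak{A}_2[2]\otimes\K)$, which need not vanish. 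The paper fixes this by an explicit correction step: in case~(i) the target $\mathfrak{A}_2[2]\otimes\K$ is Kirchberg, so $z$ is realized by a \starhom{} $\eta:\mathfrak{A}_1\otimes\K\to\mathfrak{A}_2[2]\otimes\K$, and one sets $\phi = \psi \oplus (\lambda_2\circ\eta)$ as a Cuntz sum in $\multialg{\mathfrak{A}_2\otimes\K}$; in case~(ii) one dually factors the discrepancy through the quotient and uses that $\mathfrak{A}_2\otimes\K$ is properly infinite to realize the class. Without this correction your $\phi$ is only guaranteed to hit $y$ modulo $\ker\Gamma$.
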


\begin{proof}
Set $y = \kk( X ; \id_{ \mathfrak{A}_{1} } \otimes e_{11} )^{-1} \times x \times \kk( X ; \id_{ \mathfrak{A}_{2} } \otimes e_{11} )$.  Note that by Lemma~3.10 and Theorem~3.8 of \cite{segrer:ccfis}, $\mathfrak{A}_{i} [ 2 ] \otimes \K$ satisfies the corona factorization property (see \cite{dkpn:cfp} for the definition of the corona factorization property).  Since $\mathfrak{A}_{i} [ k ]$ is an AF-algebra or an Kirchberg algebra, $\mathfrak{A}_{i}[k]$ has the stable weak cancellation.  By Lemma~3.15 of \cite{err:fullext}, $\mathfrak{A}_{i}$ has stable weak cancellation.  Let $\mathfrak{e}_{i}$ be the extension
\begin{align*}
0 \to \mathfrak{A}_{i}[2] \otimes \K  \to \mathfrak{A}_{i} \otimes \K  \to \mathfrak{A}_{i} [1] \otimes \K \to 0.
\end{align*}  
By Corollary~3.24 of \cite{err:fullext}, $\mathfrak{e}_{i}$ is a full extension since $\mathfrak{A}_{i}[1]$ has cancellation of projections (in the AF case) and $\mathfrak{A}_{i}[1]$ is properly infinite (in the purely infinite case).

\medskip

\emph{Case (i): $\mathfrak{A}_{i}[2]$ is a purely infinite simple $C^{*}$-algebra and $\mathfrak{A}_{i} [1]$ is an AF-algebra.}  By Theorem~3.3 of \cite{segrer:ccfis}, $r_{X_{2}}^{\{1\}}( y ) \times [ \tau_{ \mathfrak{e}_{2} } ] = [ \tau_{ \mathfrak{e}_{1} } ] \times r_{X_{2}}^{\{2\}} (y)$ in $\kk^{1} ( \mathfrak{A}_{1} [ 1] \otimes \K , \mathfrak{A}_{2} [2] \otimes \K )$.  Since $y$ is invertible in $\kk( X_{2} , \mathfrak{A}_{1} \otimes \K , \mathfrak{A}_{2} \otimes \K )$, we have that $r_{X_{2}}^{\{1\}}( y )$ is invertible in $\kk( \mathfrak{A}_{1} [1] \otimes \K , \mathfrak{A}_{2}[1] \otimes\K )$ and $\Gamma( r_{X_{2}}^{\{1\}}( y ) ) = \Gamma ( x )_{\{1\}}$ is a positive isomorphism.  Thus, by Elliott's classification \cite{af}, there exists an isomorphisms $\ftn{ \psi_{1} }{ \mathfrak{A}_{1} [ 1 ] \otimes \K}{ \mathfrak{A}_{2} [ 1 ] \otimes \K}$ such that $\kk ( \psi_{1} ) = r_{X_{2}}^{\{1\}}( y )$.  Since $y$ is invertible in $\kk( X_{2} , \mathfrak{A}_{1} \otimes \K , \mathfrak{A}_{2} \otimes \K )$, we have that $r_{X_{2}}^{\{2\}}( y )$ is invertible in $\kk( \mathfrak{A}_{1} [2] \otimes \K , \mathfrak{A}_{2}[2] \otimes\K )$.  Thus, by Kirchberg-Phillips classification (see \cite{kirchpure} and \cite{phillipspureinf}), there exists an isomorphism $\ftn{ \psi_{0} }{ \mathfrak{A}_{1} [ 2 ] \otimes \K }{ \mathfrak{A}_{2} [2] \otimes \K }$ such that $\kk( \psi_{0} ) = r_{X_{2}}^{\{2\}}( y )$.  By Lemma~4.5 of \cite{segrer:ccfis} and its proof, there exists a unitary $u \in \multialg{ \mathfrak{A}_{2}[2] \otimes \K }$ such that $\psi = ( \mathrm{Ad}(u) \circ \psi_{0} , \mathrm{Ad} (u) \circ \widetilde{\psi}_{0} , \psi_{1} )$ is an $X_{2}$-equivariant isomorphism from $\mathfrak{A}_{1} \otimes \K$ to $\mathfrak{A}_{2} \otimes \K$, where $\ftn{ \widetilde{\psi}_{0} }{ \multialg{ \mathfrak{A}_{1} [ 2 ] \otimes \K } }{  \multialg{ \mathfrak{A}_{1} [ 2 ] \otimes \K } }$ is the unique isomorphism extending $\psi_{0}$.  Note that $\kk ( \psi_{\{ k \} } ) = r_{X_{2}}^{\{k\}} ( y)$ for $k = 1, 2$.

Note that
\begin{align*}
0 \to i^{ X_{2} }_{\{2\} } ( ( \mathfrak{A}_{i} \otimes \K ) [ 2 ] ) \overset{ \lambda_{i}} { \to } \mathfrak{A}_{i} \otimes \K \overset{ \beta_{i} }{ \to }  i^{ X_{2} }_{ \{ 1 \} } ( (\mathfrak{A}_{i} \otimes \K)[1] ) \to 0
\end{align*}
is a semi-split extension of $C^{*}$-algebras over $X_{2}$ (see Definition~3.5 of \cite{rmrn:bootstrap}).  Set 
\begin{align*}
\mathfrak{I}_{i} = i^{ X_{2} }_{\{2\} } ( ( \mathfrak{A}_{i} \otimes \K ) [ 2 ] ) \qquad \text{and} \qquad \mathfrak{B}_{i} = i^{ X_{2} }_{ \{ 1 \} } ( (\mathfrak{A}_{i} \otimes \K)[1] ).
\end{align*}
By Theorem~3.6 of \cite{rmrn:bootstrap} (see also Korollar~3.4.6 of \cite{bonkat}), 
 \begin{align*}
\xymatrix{
 \kk( X_{2} ; \mathfrak{A}_{1} \otimes \K , \mathfrak{I}_{2}  ) \ar[r]^-{ (\lambda_{2} )_{*} } & \kk ( X_{2} ; \mathfrak{A}_{1} \otimes \K , \mathfrak{A}_{2} \otimes \K ) \ar[r]^-{ ( \beta_{2} )_{*} } &  \kk ( X_{2} ; \mathfrak{A}_{1}\otimes \K, \mathfrak{B}_{2} )
 }
 \end{align*}
is exact.  By Proposition~3.12 of \cite{rmrn:bootstrap}, $ \kk ( X_{2} ; \mathfrak{A}_{1} \otimes \K, \mathfrak{B}_{2} )$ and $\kk ( \mathfrak{A}_{1}[1] \otimes \K, \mathfrak{A}_{2} [1]  \otimes \K)$ are naturally isomorphic.  Hence, there exists $z \in \kk ( X_{2} ; \mathfrak{A}_{1} \otimes \K , \mathfrak{I}_{2} )$ such that $y - \kk( X_{2} ; \psi ) = z \times \kk ( X_{2} ; \lambda_{2} )$ since $\kk( \psi_{ \{1\}} ) = r_{X_{2}}^{\{1\}}(y)$.  

By Proposition~3.13 of \cite{rmrn:bootstrap}, $\kk ( X_{2} ; \mathfrak{A}_{1} \otimes \K , \mathfrak{I}_{2} )$ and $\kk ( \mathfrak{A}_{1} \otimes \K ,  (\mathfrak{A}_{2}  \otimes \K )[2] )$ are isomorphic.  By Theorem~8.3.3 of \cite{classentropy} (see also Hauptsatz 4.2 of \cite{kirchpure}), there exists a $*$-homomorphism $\ftn{ \eta }{ \mathfrak{A}_{1} \otimes \K }{ (\mathfrak{A}_{2} \otimes \K )[2] }$ such that $\kk ( \eta ) = \overline{z}$, where $\overline{z}$ is the image of $z$ under the isomorphism $\kk ( X_{2} ; \mathfrak{A}_{1} \otimes \K , \mathfrak{I}_{2} ) \cong \kk ( \mathfrak{A}_{1} \otimes \K ,  (\mathfrak{A}_{2}  \otimes \K )[2] )$.  Note that $\eta$ induces an $X_{2}$-equivariant homomorphism $\ftn{ \eta }{ \mathfrak{A}_{1} \otimes \K }{ \mathfrak{I}_{2} }$ such that $\kk( X_{2} ; \eta ) = z$.  

Set $\phi = \psi + (\lambda_{2} \circ \eta)$, where the sum is the Cuntz sum in $\multialg{ \mathfrak{A}_{2} \otimes \K }$.  Then $\ftn{ \phi}{ \mathfrak{A}_{1} \otimes \K } { \mathfrak{A}_{2} \otimes \K }$ is an $X_{2}$-equivariant homomorphism such that $\kk( X_{2} ; \phi ) = y$.  Since $\psi_{\{2\}}$ and $\psi_{\{1\}}$ are injective homomorphisms, $\phi_{\{2\}}$ and $\phi_{\{1\}}$ are injective homomorphisms. 

\medskip

\emph{Case (ii): $\mathfrak{A}_{i}[2]$ is an AF-algebra and $\mathfrak{A}_{i} [1]$ is a purely infinite simple $C^{*}$-algebra.}  By Theorem~3.3 of \cite{segrer:ccfis}, $r_{X_{2}}^{\{1\}}( y ) \times [ \tau_{ \mathfrak{e}_{2} } ] = [ \tau_{ \mathfrak{e}_{1} } ] \times r_{X_{2}}^{\{2\}} (y)$ in $\kk^{1} ( \mathfrak{A}_{1} [ 1] \otimes \K , \mathfrak{A}_{2} [2] \otimes \K )$.  Since $y$ is invertible in $\kk( X_{2} , \mathfrak{A}_{1} \otimes \K , \mathfrak{A}_{2} \otimes \K )$, we have that $r_{X_{2}}^{\{2\}}( y )$ is invertible in $\kk( \mathfrak{A}_{1} [2] \otimes \K , \mathfrak{A}_{2}[2] \otimes\K )$ and $\Gamma( r_{X_{2}}^{\{2\}}( y ) ) = \Gamma ( x )_{\{2\}}$ is an order isomorphism.  Thus, by Elliott's classification \cite{af}, there exists an isomorphism $\ftn{ \psi_{0} }{ \mathfrak{A}_{1} [ 2 ] \otimes \K}{ \mathfrak{A}_{2} [ 2 ] \otimes \K}$ such that $\kk ( \psi_{0} ) = r_{X_{2}}^{\{2\}}( y )$.  Since $y$ is invertible in $\kk( X_{2} , \mathfrak{A}_{1} \otimes \K , \mathfrak{A}_{2} \otimes \K )$, we have that $r_{X_{2}}^{\{1\}}( y )$ is invertible in $\kk( \mathfrak{A}_{1} [1] \otimes \K , \mathfrak{A}_{2}[1] \otimes\K )$.  Thus, by Kirchberg-Phillips classification (see \cite{kirchpure} and \cite{phillipspureinf}), there exists an isomorphism $\ftn{ \psi_{1} }{ \mathfrak{A}_{1} [ 1 ] \otimes \K }{ \mathfrak{A}_{2} [1] \otimes \K }$ such that $\kk( \psi_{1} ) = r_{X_{2}}^{\{1\}}( y )$.  By Lemma~4.5 of \cite{segrer:ccfis} and its proof, there exists a unitary $u \in \multialg{ \mathfrak{A}_{2}[2] \otimes \K }$ such that $\psi = ( \mathrm{Ad}(u) \circ \psi_{0} , \mathrm{Ad} (u) \circ \widetilde{\psi}_{0} , \psi_{1} )$ is an $X_{2}$-equivariant isomorphism from $\mathfrak{A}_{1} \otimes \K$ to $\mathfrak{A}_{2} \otimes \K$, where $\ftn{ \widetilde{\psi}_{0} }{ \multialg{ \mathfrak{A}_{1} [ 2 ] \otimes \K } }{  \multialg{ \mathfrak{A}_{1} [ 2 ] \otimes \K } }$ is the unique isomorphism extending $\psi_{0}$.  Note that $\kk ( \psi_{\{ k \} } ) = r_{X_{2}}^{\{k\}} ( y)$ for $k = 1, 2$.

Note that
\begin{align*}
0 \to i^{ X_{2} }_{\{ 2\} } ( ( \mathfrak{A}_{i} \otimes \K ) [ 2] ) \overset{ \lambda_{i}} { \to } \mathfrak{A}_{i} \otimes \K \overset{ \beta_{i} }{ \to }  i^{ X_{2}}_{\{1\}} ( (\mathfrak{A}_{i} \otimes \K)[1] ) \to 0
\end{align*}
is a semi-split extension of $C^{*}$-algebras over $X_{2}$ (see Definition~3.5 of \cite{rmrn:bootstrap}).  Set 
\begin{align*}
\mathfrak{I}_{i} =  i^{ X_{2} }_{\{2\}} ( ( \mathfrak{A}_{i} \otimes \K ) [ 2] ) \qquad \text{and} \qquad \mathfrak{B}_{i} = i^{ X_{2}}_{\{1\}} ( (\mathfrak{A}_{i} \otimes \K)[1] ).
\end{align*}   
By Theorem~3.6 of \cite{rmrn:bootstrap} (see also Korollar~3.4.6 \cite{bonkat})
\begin{align*}
\xymatrix{ \kk( X_{2} ; \mathfrak{B}_{1} , \mathfrak{A}_{2} \otimes \K ) \ar[r]^-{  ( \beta_{1} )^{*}} & \kk( X_{2}  ; \mathfrak{A}_{1} \otimes \K , \mathfrak{A}_{2} \otimes \K ) \ar[r]^-{ ( \lambda_{1} )^{*} } & \kk ( X_{2} ; \mathfrak{I}_{1} , \mathfrak{A}_{2} \otimes \K )
}
\end{align*}
is exact.  By Proposition~3.12 of  \cite{rmrn:bootstrap}, $\kk ( X_{2} ; \mathfrak{I}_{1} , \mathfrak{A}_{2} \otimes \K )$ and  $\kk (  \mathfrak{A}_{1} [2] \otimes \K , \mathfrak{A}_{2}[2] \otimes \K )$ are naturally isomorphic.   Hence, there exists $z \in \kk( X_{2} ; \mathfrak{B}_{1} , \mathfrak{A}_{2} \otimes \K )$ such that $y - \kk( X ; \psi ) = \kk ( X_{2} ; \beta_{1} ) \times z$ since $\kk( \psi_{ \{2\} } ) = r_{X_{2}}^{\{2\}} (y)$.  By Proposition~3.13 of \cite{rmrn:bootstrap},  $\kk( X_{2} ; \mathfrak{B}_{1} , \mathfrak{A}_{2} \otimes \K )$ and $\kk ( ( \mathfrak{A}_{1} \otimes \K )[1], \mathfrak{A}_{2} \otimes \K ) $ are isomorphic.  Therefore, by Theorem~8.3.3 of \cite{classentropy}, there exists a homomorphism $\ftn{ \eta }{ (\mathfrak{A}_{1}  \otimes \K)[1] }{ \mathfrak{A}_{2} \otimes \K }$ such that $\kk ( \eta ) = \overline{z}$, where $\overline{z}$ is the image of $z$ under the isomorphism $\kk( X_{2} ; \mathfrak{B}_{1} , \mathfrak{A}_{2} \otimes \K ) \cong \kk ( ( \mathfrak{A}_{1} \otimes \K )[1], \mathfrak{A}_{2} \otimes \K ) $ (the existence of the homomorphism uses the fact that $\mathfrak{A}_{2} \otimes \K$ is a properly infinite $C^{*}$-algebra which follows from Proposition~3.21 and Theorem~3.22 of \cite{err:fullext}).  Note that $\eta$ induces an $X_{2}$-equivariant homomorphism $\ftn{ \eta }{ \mathfrak{B}_{1} }{ \mathfrak{A}_{2} \otimes \K }$ such that $\kk ( X_{2} ; \eta ) = z$.  

Set $\phi = \psi + ( \eta \circ \beta_{1} )$, where the sum is the Cuntz sum in $\multialg{ \mathfrak{A}_{2} \otimes \K }$.  Then $\phi$ is an $X_{2}$-equivariant homomorphism such that $\kk ( X_{2} ; \phi ) = y$.  Since $\psi_{\{2\}}$ and $\psi_{\{1\}}$ are injective homomorphisms, $\phi_{\{2\}}$ and $\phi_{\{1\}}$ are injective homomorphisms. 
\end{proof}

\subsection{Strong classification of extensions of AF-algebras by purely infinite $C^{*}$-algebras}

\begin{defin}
Let $\mathfrak{A}$ and $\mathfrak{B}$ be separable $C^{*}$-algebras over $X$.  Two $X$-equivariant homomorphisms $\ftn{ \phi , \psi }{ \mathfrak{A} }{ \mathfrak{B} }$ are said to be \emph{approximately unitarily equivalent} if there exists a sequence of unitaries $\{ u_{n} \}_{ n = 1}^{ \infty }$ in $\multialg{ \mathfrak{B} }$ such that 
\begin{align*}
\lim_{ n \to \infty } \| u_{n} \phi ( a ) u_{n}^{*} - \psi ( a ) \| = 0 
\end{align*}
for all $a \in \mathfrak{A}$.
\end{defin}
 
We now recall the definition of $\kl ( \mathfrak{A} , \mathfrak{B} )$ from \cite{defKL}.  

\begin{defin}
Let $\mathfrak{A}$ be a separable, nuclear $C^{*}$-algebra in $\mathcal{N}$ and let $\mathfrak{B}$ be a $\sigma$-unital $C^{*}$-algebra.  Let 
\begin{align*}
\mathrm{Ext}_{\Z}^{1} ( K_{*} ( \mathfrak{A} ) , K_{*+1} ( \mathfrak{B} ) ) = \mathrm{Ext}_{\Z}^{1} ( K_{0} ( \mathfrak{A} ) , K_{1} ( \mathfrak{B} ) ) \oplus \mathrm{Ext}_{\Z}^{1}  ( K_{1} ( \mathfrak{A} ) , K_{0} ( \mathfrak{B} ) ).
\end{align*}
Since $\mathfrak{A}$ is in $\mathcal{N}$, by \cite{uct}, $\mathrm{Ext}_{\Z}^{1} ( K_{*} ( \mathfrak{A} ) , K_{*+1} ( \mathfrak{B} ) )$ can be identified as a sub-group of the group $\kk ( \mathfrak{A} , \mathfrak{B} )$.  

For abelian groups, $G$ and $H$, let $\mathrm{Pext}_{\Z}^{1} ( G , H )$ be the subgroup of $\mathrm{Ext}_{\Z}^{1} ( G , H )$ of all pure extensions of $G$ by $H$.  Set 
\begin{align*}
\mathrm{Pext}_{\Z}^{1} ( K_{*} ( \mathfrak{A} ) , K_{*+1} ( \mathfrak{B} ) ) = \mathrm{Pext}_{\Z}^{1} ( K_{0} ( \mathfrak{A} ) , K_{1} ( \mathfrak{B} ) ) \oplus \mathrm{Pext}_{\Z}^{1} ( K_{1} ( \mathfrak{A} ) , K_{0} ( \mathfrak{B} ) ).
\end{align*}
Define $\kl ( \mathfrak{A} , \mathfrak{B} )$ as the quotient
\begin{align*}
\kl ( \mathfrak{A} , \mathfrak{B} ) = \kk ( \mathfrak{A} , \mathfrak{B} ) / \mathrm{Pext}_{\Z}^{1} ( K_{*} ( \mathfrak{A} ) , K_{*+1} ( \mathfrak{B} ) ).
\end{align*}  
R{\o}rdam in \cite{defKL} proved that if $\ftn{ \phi , \psi }{ \mathfrak{A} }{ \mathfrak{B} }$ are approximately unitarily equivalent, then $\kl ( \phi ) = \kl ( \psi )$.
\end{defin}

\begin{notat}
Let $x \in \kk ( \mathfrak{A} , \mathfrak{B} )$.  Then the element $x + \mathrm{Pext}_{\Z}^{1} ( K_{*} ( \mathfrak{A} ) , K_{*+1} ( \mathfrak{B} ) )$ in $\kl ( \mathfrak{A} , \mathfrak{B} )$ will be denoted by $\kl ( x )$.  
\end{notat}

A nuclear, purely infinite, separable, simple $C^{*}$-algebra will be called a \emph{Kirchberg algebra}.

\begin{theor}(Uniqueness Theorem~1) \label{t:uniq1}
Let $\mathfrak{A}_{1}$ and $\mathfrak{A}_{2}$ be separable, nuclear, $C^{*}$-algebras over $X_{2}$ such that $\mathfrak{A}_{i}$ has real rank zero, $\mathfrak{A}_{i}$ is stable, $\mathfrak{A}_{i} [2]$ is a Kirchberg algebra in $\mathcal{N}$, $\mathfrak{A}_{i} [1]$ is an AF-algebra, and $\mathfrak{A}_{i}[2]$ is an essential ideal of $\mathfrak{A}_{i}$.  Suppose $\ftn{ \phi, \psi }{ \mathfrak{A}_{1} }{ \mathfrak{A}_{2} }$ be $X_{2}$-equivariant homomorphism such that $\kk ( X_{2} ; \phi ) = \kk ( X_{2} ; \psi )$, and $\phi_{\{2\}}$, $\phi_{\{1\}}$, $\psi_{\{2\}}$, and $\psi_{\{1\}}$ are injective homomorphisms.  Then $\phi$ and $\psi$ are approximately unitarily equivalent.
\end{theor}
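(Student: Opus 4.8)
\begin{skproof}
The plan is to treat $\phi$ and $\psi$ as morphisms between the extensions $\mathfrak{e}_{i}\colon 0\to \mathfrak{A}_{i}[2]\to\mathfrak{A}_{i}\to\mathfrak{A}_{i}[1]\to 0$, to establish the approximate unitary equivalence first on the ideal, then on the quotient, and finally to assemble these into a single equivalence implemented by unitaries in $\multialg{\mathfrak{A}_{2}}$. Applying the restriction functors $r_{X_2}^{\{1\}}$ and $r_{X_2}^{\{2\}}$ to the hypothesis $\kk(X_2;\phi)=\kk(X_2;\psi)$ yields $\kk(\phi_{\{k\}})=\kk(\psi_{\{k\}})$ for $k=1,2$. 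On the ideal, $\phi_{\{2\}}$ and $\psi_{\{2\}}$ are injective \starhoms between stable Kirchberg algebras in $\mathcal{N}$ with the same $\kk$-class; since equal $\kk$ forces equal $\kl$, the Kirchberg--Phillips uniqueness theorem \cite{kirchpure,phillipspureinf} shows they are approximately unitarily equivalent, and stability lets me take the implementing unitaries in $\multialg{\mathfrak{A}_{2}[2]}$. On the quotient, $\phi_{\{1\}}$ and $\psi_{\{1\}}$ are injective \starhoms between stable AF-algebras with the same $\kk$-class, so by Elliott's classification \cite{af} they too are approximately unitarily equivalent.

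My next step would be to use that $\mathfrak{A}_{2}[2]$ is an essential ideal, so that $\mathfrak{A}_{2}\subseteq\multialg{\mathfrak{A}_{2}[2]}$ and $\multialg{\mathfrak{A}_{2}}\subseteq\multialg{\mathfrak{A}_{2}[2]}$, in order to lift the ideal unitaries to unitaries of $\multialg{\mathfrak{A}_{2}}$ and, after conjugating $\phi$, to reduce to the situation where $\phi_{\{2\}}$ and $\psi_{\{2\}}$ agree to within a prescribed tolerance on a prescribed finite set (the conjugation only moves $\phi_{\{1\}}$ within its approximate unitary equivalence class). At this point the problem becomes one about two liftings of essentially the same ideal and quotient data, governed by the extension class in $\kk^{1}$. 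Here I would invoke that both $\mathfrak{e}_{i}$ are full---the ideal is essential and fullness of the $\mathfrak{e}_{i}$ was established in the proof of Theorem~\ref{t:exist}---and that the stable Kirchberg algebra $\mathfrak{A}_{i}[2]$ has the corona factorization property \cite{segrer:ccfis,dkpn:cfp}. For full extensions by a stable corona-factorization ideal, the extension is controlled up to unitary equivalence by its Busby invariant together with the map on the ideal, so matching the quotient maps and the $\kk^{1}$-datum suffices.

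The purpose of the \emph{full} hypothesis $\kk(X_2;\phi)=\kk(X_2;\psi)$, rather than the mere equality of its two components, is precisely to remove the gluing obstruction: using the relation $r_{X_2}^{\{1\}}(y)\times[\tau_{\mathfrak{e}_{2}}]=[\tau_{\mathfrak{e}_{1}}]\times r_{X_2}^{\{2\}}(y)$ from Theorem~3.3 of \cite{segrer:ccfis} (exactly as in the proof of Theorem~\ref{t:exist}), the Busby invariants attached to $\phi$ and to $\psi$ are forced to agree in $\kk^{1}(\mathfrak{A}_{1}[1],\mathfrak{A}_{2}[2])$ once the ideal and quotient equivalences have been installed.

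I expect the main obstacle to be the \emph{simultaneous} realization of the two approximate unitary equivalences by a single sequence of unitaries in $\multialg{\mathfrak{A}_{2}}$ that projects correctly both to $\multialg{\mathfrak{A}_{2}[2]}$ and to the corona $\corona{\mathfrak{A}_{2}[2]}$. Concretely, the unitary implementing the equivalence on the ideal and the one implementing it on the quotient must be spliced into a single multiplier unitary; carrying this out calls for an Elliott-type approximate intertwining argument inside $\multialg{\mathfrak{A}_{2}}$, lifting a unitary path from the corona while exploiting stability to supply the necessary isometries and the corona factorization property to absorb the resulting full elements. This lifting-and-splicing step, and not the invariant bookkeeping, is the technical heart of the argument.
\end{skproof}
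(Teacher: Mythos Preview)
Your outline correctly identifies that one must match behavior on the ideal and on the quotient, but the ``splicing'' step you flag as the technical heart is in fact the entire content of the theorem, and your proposed toolkit (corona factorization, fullness, $\kk^{1}$-matching of Busby invariants, Elliott intertwining in $\multialg{\mathfrak{A}_{2}}$) does not give a concrete mechanism to carry it out. A unitary in $\multialg{\mathfrak{A}_{2}[2]}$ need not lie in (the image of) $\multialg{\mathfrak{A}_{2}}$, and conjugation by such a unitary need not preserve $\mathfrak{A}_{2}\subseteq\multialg{\mathfrak{A}_{2}[2]}$, so your reduction step does not go through as stated. Moreover, $\phi$ and $\psi$ are homomorphisms, not extensions; speaking of ``the Busby invariants attached to $\phi$ and $\psi$'' and matching them in $\kk^{1}$ is the kind of argument used to classify extensions up to (strong) unitary equivalence, not to compare two \starhoms between fixed extensions.

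The paper circumvents the splicing problem entirely by a different mechanism: it first shows that the extensions $\mathfrak{e}_{i}$ are \emph{quasi-diagonal}. Because $\mathfrak{A}_{i}$ has real rank zero and $K_{1}(\mathfrak{A}_{i}[1])=0$, the index and exponential maps vanish, so the Busby invariant has trivial image in $\kl(\mathfrak{A}_{i}[1],\corona{\mathfrak{A}_{i}[2]})$ (here the AF hypothesis forces $\mathrm{Pext}=\mathrm{Ext}$); then Lin's criterion \cite{quasidiagext} yields quasi-diagonality. This produces an approximate identity $\{e_{k}\}$ of projections in $\mathfrak{A}_{1}[2]$ almost commuting with $\mathfrak{A}_{1}$, and (using real rank zero and the AF quotient, as in \cite{ee:dim}) a finite-dimensional subalgebra $\mathfrak{D}\subseteq (1-e_{k})\mathfrak{A}_{1}(1-e_{k})$ with $\mathfrak{D}\cap\mathfrak{A}_{1}[2]=\{0\}$ such that any finite set is approximated by $\mathfrak{D}+e_{k}\mathfrak{A}_{1}[2]e_{k}$. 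On $\mathfrak{D}$ one matches $\phi$ and $\psi$ by a partial isometry $u_{1}$ built from Murray--von~Neumann equivalences of the matrix units (this is where stable weak cancellation and the equality of $K_{0}$-classes, together with an ideal computation showing $\phi(f_{11}^{\ell})$ and $\psi(f_{11}^{\ell})$ generate the same ideal, are used). On the corner $e_{k}\mathfrak{A}_{1}[2]e_{k}$ one applies Lin's separable BDF theorem \cite[Theorem~6.7]{sepBDF} to get a partial isometry $u_{2}\in\mathfrak{A}_{2}[2]$. The crucial point is that $u_{1}$ and $u_{2}$ have \emph{orthogonal} source and range projections, so $u_{1}+u_{2}$ is again a partial isometry, and stability of $\mathfrak{A}_{2}$ supplies a complementary $u_{3}$ making $u=u_{1}+u_{2}+u_{3}$ a unitary in $\multialg{\mathfrak{A}_{2}}$. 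Thus no splicing or lifting from the corona is needed: quasi-diagonality manufactures orthogonality, and the two pieces simply add. This is the idea missing from your sketch.
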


\begin{proof}
Since  $\mathfrak{A}_{i} [ 1 ]$ is an AF algebra, every finitely generated subgroup of $K_{0} ( \mathfrak{A}_{i} [ 1] )$ is torsion free (hence free) and every finitely generated subgroup of $K_{1} ( \mathfrak{A}_{i} [ 1] )$ is zero.  Thus,  $\mathrm{Pext}_{\Z}^{1} ( K_{*} ( \mathfrak{A}_{i} [ 1] ) , K_{*+1} ( \corona{ \mathfrak{A}_{j} [ 2] } ) ) = \mathrm{Ext}_{\Z}^{1} ( K_{*} ( \mathfrak{A}_{i} [ 1] ) , K_{*+1} ( \corona{ \mathfrak{A}_{j} [ 2] } ) )$  which implies that $\kl ( \mathfrak{A}_{i} [ 1 ] , \corona{ \mathfrak{A}_{j} [ 2] } ) \cong \Hom ( K_{*} ( \mathfrak{A}_{i} [ 1] ) , K_{*} ( \corona{ \mathfrak{A}_{j} [ 2] } ) )$.

Let $\mathfrak{e}_{i}$ denote the extension $0 \to \mathfrak{A}_{i} [ 2 ]  \to \mathfrak{A}_{i}  \to  \mathfrak{A}_{i} [ 1] \to 0$.  Since $\mathfrak{A}_{i}$ has real rank zero and $K_{1} ( \mathfrak{A}_{i }[1] ) = 0$, we have that $K_{j} ( \tau_{ \mathfrak{e}_{i} } ) = 0$, where $\tau_{ \mathfrak{e}_{i}}$ is the Busby invariant of $\mathfrak{e}_{i}$.  Hence, $[ \tau_{ \mathfrak{e}_{i} } ] = 0$ in $\kl ( \mathfrak{A}_{i} [ 1 ] , \corona{ \mathfrak{A}_{i} [ 2 ] } )$.  By Corollary~6.7 of \cite{quasidiagext}, $\mathfrak{e}_{i}$ is quasi-diagonal.  Thus, there exists an approximate identity of $\mathfrak{A}_{1} [ 2 ]$ consisting of projections $\{ e_{k} \}_{k \in \N }$ such that 
\begin{align*}
\lim_{ n \to \infty } \| e_{k} x - x e_{k} \| = 0
\end{align*}
for all $x \in  \mathfrak{A}_{i}$.

Since $\mathfrak{A}_{1}[1]$ is an AF-algebra and $\mathfrak{A}_{1}$ has real rank zero, as in the proof of Lemma~9.8 of \cite{ee:dim}, there exists a sequence of finite dimensional sub-$C^{*}$-algebras $\{ \mathfrak{B}_{k} \}_{ k = 1}^{ \infty }$ of $\mathfrak{A}_{1}$ such that $\mathfrak{B}_{k} \cap \mathfrak{A}_{1} [ 2 ] = \{ 0 \}$ and for each $x \in \mathfrak{A}_{1}$, there exist $y_{1} \in \overline{ \bigcup_{ k = 1}^{ \infty } \mathfrak{B}_{k} }$ and $y_{2} \in \mathfrak{A}_{1} [ 2 ]$ such that  $x = y_{1} + y_{2}$.

Let $\epsilon > 0$ and $\mathcal{F}$ be a finite subset of $\mathfrak{A}_{1}$.  Note that we may assume $\mathcal{F}$ is the union of the generators of $\mathfrak{B}_{m}$, for some $m \in \N$ and $\mathcal{G}$, for some finite subset $\mathcal{G}$ of $\mathfrak{A}_{1} [ 2 ]$ .  Since $\mathfrak{B}_{m}$ is a finite dimensional $C^{*}$-algebra,
\begin{align*}
\lim_{ k \to \infty } \| e_{k} x - x e_{k} \| = 0
\end{align*}
for all $x \in \mathfrak{A}_{1}$, and $\{ e_{k} \}_{k \in \N }$ is an approximate identity for $\mathfrak{A}_{1} [ 2 ]$ consisting of projections, there exist $k \in \N$, a finite dimensional sub-$C^{*}$-algebra $\mathfrak{D}$ of $\mathfrak{A}_{1}$ with $\mathfrak{D} \subseteq ( 1_{ \multialg{ \mathfrak{A}_{1} } } - e_{k} ) \mathfrak{A}_{1} ( 1_{ \multialg{ \mathfrak{A}_{1} } } - e_{k} )$ and $\mathfrak{D} \cap \mathcal{A}_{1} [ 2 ] = \{ 0 \}$, and there exists a finite subset $\mathcal{H}$ of $e_{k} \mathfrak{A}_{1} [ 2 ] e_{k}$ such that for all $x \in \mathcal{F}$, there exist $y_{1} \in \mathfrak{D}$ and $y_{2} \in \mathcal{H}$
\begin{align*}
\| x - ( y_{1} + y_{2} ) \| < \frac{ \epsilon }{ 3 }.
\end{align*} 

Set $\mathfrak{D} = \bigoplus_{ \ell = 1}^{s } \mathsf{M}_{n_{\ell}}$ and let $\{ f_{ij}^{\ell} \}_{ i , j = 1}^{ n_{ \ell }}$ be a system of matrix units for $\mathsf{M}_{n_{ \ell } }$.  Let $\mathfrak{I}_{\ell}$ be the ideal in $\mathfrak{A}_{1}$ generated by $f_{11}^{\ell}$.  Since $\mathfrak{A}_{i}[2]$ is simple and $\mathfrak{A}_{i}[2]$ is an essential ideal of $\mathfrak{A}_{i}$, we have that $\mathfrak{A}_{i} [ 2 ] \subseteq \mathfrak{I}$ for all nonzero ideal $\mathfrak{I}$ of $\mathfrak{A}_{i}$.  Thus, $\mathfrak{A}_{1} [2] \subseteq \mathfrak{I}_{\ell}$ since $\mathfrak{D} \cap \mathfrak{A}_{1} [ 2] = 0$.

Let $\mathfrak{I}_{\ell}^{\phi}$ be the ideal in $\mathfrak{A}_{2}$ generated by $\phi( f_{11}^{\ell} )$ and let $\mathfrak{I}_{\ell}^{\psi}$ be the ideal in $\mathfrak{A}_{2}$ generated by $\psi( f_{11}^{\ell} )$.  Since $\phi$ and $\psi$ are $X_{2}$-equivariant homomorphisms and since $\phi_{\{1\}}$ and $\psi_{\{1\}}$ are injective homomorphisms, we have that $\phi( f_{11}^{\ell}) \notin \mathfrak{A}_{2}[2]$ and $\psi( f_{11}^{\ell} ) \notin \mathfrak{A}_{2} [ 2 ]$.  Therefore, $\mathfrak{A}_{2} [ 2 ] \subseteq \mathfrak{I}_{\ell}^{\phi}$ and $\mathfrak{A}_{2} [ 2 ] \subseteq \mathfrak{I}_{\ell}^{\psi}$.  Since $K_{0} ( \phi_{\{1\}} ) = K_{0} ( \psi_{\{1\}} )$ and since $\mathfrak{A}_{2} [ 1 ]$ is an AF-algebra, we have that $\phi_{\{1\}} ( \overline{f}_{11}^{\ell} )$ is Murray-von Neumann equivalent to $\psi_{\{1\}} ( \overline{f}_{11}^{\ell} )$, where $\overline{f}_{11}^{\ell}$ is the image of $f_{11}^{\ell}$ in $\mathfrak{A}_{1} [ 1 ]$.  Thus, they generate the same ideal in $\mathfrak{A}_{2}[1]$.  Since $\mathfrak{A}_{2} [ 2 ] \subseteq \mathfrak{I}_{\ell}^{\phi}$ and $\mathfrak{A}_{2} [ 2 ] \subseteq \mathfrak{I}_{\ell}^{\psi}$ and since $\psi_{\{1\}} ( \overline{f}_{11}^{\ell} )$ and $\phi_{ \{1\}} ( \overline{f}_{11}^{\ell})$ generate the same ideal in $\mathfrak{A}_{2}[1]$, we have that $\mathfrak{I} = \mathfrak{I}_{\ell}^{\phi} = \mathfrak{I}_{\ell}^{\psi}$.

Note that the following diagram
\begin{align*}
\xymatrix{
0 \ar[r] & K_{0} ( \mathfrak{A}_{2}[2] ) \ar[r] \ar@{=}[d] & K_{0} ( \mathfrak{I} ) \ar[r] \ar[d]^{K_{0} (\iota) } & K_{0} ( \mathfrak{I} / \mathfrak{A}_{2}[2] ) \ar[d]^{K_{0} (\overline{\iota})} \\
0 \ar[r] & K_{0} ( \mathfrak{A}_{2}[2] ) \ar[r] & K_{0} ( \mathfrak{A}_{2} ) \ar[r] & K_{0} ( \mathfrak{A}_{2}[1]  )
}
\end{align*}
is commutative, the rows are exact, and $\iota$ and $\overline{\iota}$ are the canonical embeddings.  Since $\mathfrak{A}_{2}[1]$ is an AF-algebra, $K_{0} ( \overline{\iota} )$ is injective.  A diagram chase shows that $K_{0} ( \iota )$ is injective.  Since $\kk ( X_{2} ; \phi ) = \kk ( X_{2} ; \psi )$, we have that $[ \phi( f_{11}^{\ell} ) ] = [ \psi ( f_{11}^{ \ell} ) ]$ in $K_{0} ( \mathfrak{A}_{2} )$.  Since $\phi( f_{11}^{\ell} )$ and $\psi( f_{11}^{\ell} )$ are elements of $\mathfrak{I}$ and $K_{0} ( \iota )$ is injective, we have that $[ \phi( f_{11}^{\ell} ) ] = [ \psi ( f_{11}^{ \ell} ) ]$ in $K_{0} ( \mathfrak{I})$.  Since $\mathfrak{A}_{i} [ 1]$ is an AF-algebra and $\mathfrak{A}_{i}[2]$ is a Kirchberg algebra, they both have stable weak cancellation.  By Lemma~3.15 of \cite{err:fullext}, $\mathfrak{A}_{i}$ has stable weak cancellation.  Thus, $\phi ( f_{11}^{\ell} )$ is Murray-von Neumann equivalent to $\psi ( f_{11}^{ \ell } )$.  Hence, there exists $v_{ \ell } \in \mathfrak{A}_{2}$ such that $v_{\ell}^{*} v_{\ell} = \phi ( f_{11}^{\ell} )$ and $v_{\ell} v_{\ell}^{*} = \psi ( f_{11}^{ \ell } )$.  

Set 
\begin{align*}
u_{1} = \sum_{ \ell = 1}^{s} \sum_{ i =1}^{n_{ \ell } } \psi ( f_{i1}^{\ell} ) v_{\ell} \phi ( f_{1i}^{\ell} )
\end{align*}  
Then, $u_{1}$ is a partial isometry in $\mathfrak{A}_{1}$ such that $u_{1}^{*} u_{1} = \phi ( 1_{ \mathfrak{D} } )$, $u_{1} u_{1}^{*} = \psi ( 1_{ \mathfrak{D} } )$, and $u_{1} \phi ( x ) u_{1}^{*} = \psi (x)$ for all $x \in \mathfrak{D}$.

Let $\ftn{ \beta }{ e_{k} \mathfrak{A}_{1} [ 2 ] e_{k} }{ \mathfrak{A}_{1} [ 2 ] }$ be the usual embedding.  Note that $\kk ( \phi_{ \{ 2 \}  } \circ \beta ) = \kk ( \psi_{ \{ 2 \} } \circ \beta )$ and $\phi_{ \{ 2 \} } \circ \beta $, $\psi_{ \{ 2 \} } \circ \beta$ are monomorphisms.  Therefore, by Theorem~6.7 of \cite{sepBDF}, there exists a partial isometry $u_{2} \in \mathfrak{A}_{2} [ 2 ]$ such that $u_{2}^{*} u_{2} = \phi ( e_{k} )$, $u_{2} u_{2}^{*} = \psi ( e_{k} )$, and
\begin{align*}
\| u_{2} \phi(x) u_{2}^{*} - \psi ( x ) \| < \frac{ \epsilon }{ 3 }
\end{align*} 
for all $x \in \mathcal{H}$.  

Since $\mathfrak{A}_{2}$ is stable, there exists $u_{3} \in \multialg{ \mathfrak{A}_{2} }$ such that $u_{3}^{*} u_{3} = 1_{ \multialg{ \mathfrak{A}_{2} } } - ( u_{1} + u_{2} )^{*} ( u_{1} + u_{2} )$ and $u_{3} u_{3}^{*} = 1_{ \multialg{ \mathfrak{A}_{2} } } - ( u_{1} + u_{2} )( u_{1} + u_{2} )^{*}$.  Set $u = u_{1} + u_{2} + u_{3} \in \multialg{ \mathfrak{A}_{2} }$.  Then $u$ is a unitary in $\multialg{ \mathfrak{A}_{2} }$.  

Let $x \in \mathcal{F}$.  Choose $y_{1} \in \mathfrak{D}$ and $y_{2} \in \mathcal{H} \subseteq e_{k} \mathfrak{A}_{1} [ n] e_{k}$ such that $\| x - ( y_{ 1 } + y_{2} ) \| < \frac{ \epsilon }{3} $.  Then
\begin{align*}
\| u \phi ( x ) u^{*} - \psi ( x ) \| &\leq \| u \phi (x) u^{*} - u \phi( y_{1} + y_{2} ) u^{*} \|  \\
					&\qquad   + \| u_{1} \phi ( y_{1} ) u_{1}^{*} + u_{2} \phi ( y_{2} ) u_{2}^{*} - \psi ( y_{1} ) - \psi ( y_{2} ) \| \\
					&\qquad \qquad  + \| \psi ( y_{1} + y_{2} ) - \psi ( x ) \|   \\
						&< \epsilon.
\end{align*}

We have just shown that for each $\epsilon > 0$ and for each finite subset $\mathcal{F}$ of $\mathfrak{A}_{1}$, there exists a unitary $u \in \multialg{ \mathfrak{A}_{2} }$ such that $\| u \phi ( x ) u^{*} - \psi ( x ) \| < \epsilon$ for all $x \in \mathcal{F}$.  Since $\mathfrak{A}_{1}$ is a separable $C^{*}$-algebra, we have that $\phi$ is approximately unitarily equivalent to $\psi$.
\end{proof}

\begin{lemma}\label{l:unitaryidealkthy}
Let $\mathfrak{A}$ be a separable $C^{*}$-algebra over a finite topological space $X$.  Let $u$ be unitary in $\multialg{ \mathfrak{A} \otimes \K }$.  Then $K_{X}\left( \mathrm{Ad} ( u ) \vert_{ \mathfrak{A} \otimes \K } \right) = \id_{ K_{X} ( \mathfrak{A} )} $.
\end{lemma}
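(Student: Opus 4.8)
The plan is to reduce the assertion to the classical fact that conjugation by a unitary multiplier acts trivially on $K$-theory, applied separately on each locally closed subquotient.

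First I would check that $\mathrm{Ad}(u)$ fixes every closed ideal of $\mathfrak{A}\otimes\K$. For a closed two-sided ideal $\mathfrak{I}$ and any $m\in\multialg{\mathfrak{A}\otimes\K}$ one has $m\mathfrak{I}\subseteq\mathfrak{I}$ and $\mathfrak{I} m\subseteq\mathfrak{I}$: writing $x\in\mathfrak{I}$ as a limit $x=\lim e_{\lambda}x$ for an approximate unit $(e_{\lambda})$ of $\mathfrak{I}$, the products $(me_{\lambda})x$ lie in $\mathfrak{I}$ since $me_{\lambda}\in\mathfrak{A}\otimes\K$ and $\mathfrak{I}$ is an ideal. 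Applying this to $u$ and to $u^{*}$ gives $u\mathfrak{I} u^{*}=\mathfrak{I}$, so in particular $\mathrm{Ad}(u)\bigl((\mathfrak{A}\otimes\K)(U)\bigr)=(\mathfrak{A}\otimes\K)(U)$ for every $U\in\mathbb{O}(X)$. Hence $\mathrm{Ad}(u)|_{\mathfrak{A}\otimes\K}$ is an $X$-equivariant automorphism.

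Next I would fix $Y=U\setminus V\in\mathbb{LC}(X)$ and identify the automorphism induced on $(\mathfrak{A}\otimes\K)(Y)=(\mathfrak{A}\otimes\K)(U)/(\mathfrak{A}\otimes\K)(V)$. Since $u$ multiplies $(\mathfrak{A}\otimes\K)(U)$ into itself, it restricts to a unitary in $\multialg{(\mathfrak{A}\otimes\K)(U)}$; and since it also preserves $(\mathfrak{A}\otimes\K)(V)$, the rule $a+(\mathfrak{A}\otimes\K)(V)\mapsto ua+(\mathfrak{A}\otimes\K)(V)$ is well defined and determines a unitary $u_{Y}\in\multialg{(\mathfrak{A}\otimes\K)(Y)}$, with $\mathrm{Ad}(u)_{Y}=\mathrm{Ad}(u_{Y})$ by construction.

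It then remains to invoke that $K_{*}(\mathrm{Ad}(u_{Y}))=\id$ on $K_{*}\bigl((\mathfrak{A}\otimes\K)(Y)\bigr)$. This follows from the rotation homotopy: the diagonal unitary $\diag(u_{Y},u_{Y}^{*})$ is connected to the identity through unitaries in $\mathsf{M}_{2}(\multialg{(\mathfrak{A}\otimes\K)(Y)})$ via $t\mapsto\diag(u_{Y},1)\,w_{t}\,\diag(u_{Y}^{*},1)\,w_{t}^{*}$ with $w_{t}=\bigl(\begin{smallmatrix}\cos t&\sin t\\-\sin t&\cos t\end{smallmatrix}\bigr)$, so conjugation by it fixes the class of every projection and unitary. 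As each component $K_{*}(\mathrm{Ad}(u)_{Y})$ is therefore the identity and $\mathrm{Ad}(u)$ is a single $X$-equivariant $*$-homomorphism, compatibility with the natural transformations $\iota_{*},\pi_{*},\partial_{*}$ is automatic by functoriality of the six-term sequences; under the stabilization identification $K_{X}(\mathfrak{A}\otimes\K)\cong K_{X}(\mathfrak{A})$ this yields $K_{X}(\mathrm{Ad}(u)|_{\mathfrak{A}\otimes\K})=\id_{K_{X}(\mathfrak{A})}$. The only point requiring care is the bookkeeping that descends $u$ to a genuine multiplier unitary $u_{Y}$ on each subquotient; once this is in place the $K$-theoretic triviality is entirely standard.
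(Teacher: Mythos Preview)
Your argument is correct. The paper takes a different, shorter route: it invokes the fact that the unitary group of the multiplier algebra of a stable $C^{*}$-algebra is norm path-connected, so there is a continuous path $\{u_{t}\}$ in $\multialg{\mathfrak{A}\otimes\K}$ from $u$ to $1$, and $\mathrm{Ad}(u_{t})$ gives an $X$-equivariant homotopy from $\mathrm{Ad}(u)$ to $\id$, which forces $K_{X}(\mathrm{Ad}(u))=\id$ immediately. Your approach is more elementary in that it avoids this connectedness result entirely: you descend $u$ to a multiplier unitary $u_{Y}$ on each subquotient and then apply the standard rotation homotopy for inner automorphisms. As a bonus, your proof never uses stability, so it actually establishes the stronger fact that $K_{X}(\mathrm{Ad}(u)|_{\mathfrak{A}})=\id_{K_{X}(\mathfrak{A})}$ for any separable $C^{*}$-algebra $\mathfrak{A}$ over $X$ and any unitary $u\in\multialg{\mathfrak{A}}$. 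The paper's proof is quicker once one is willing to cite the connectedness of $U(\multialg{\mathfrak{A}\otimes\K})$, while yours is self-contained and more general.
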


\begin{proof}
Since $\mathfrak{A} \otimes \K$ is stable, we have that there exists a norm continuous path of unitaries $\{ u_{t} \}$ in $\multialg{ \mathfrak{A} \otimes \K }$ such that $u_{0} = u$ and $u_{1} = 1_{ \multialg{ \mathfrak{A} \otimes \K } }$.  It follows that $K_{X}\left( \mathrm{Ad} ( u ) \vert_{ \mathfrak{A} \otimes \K } \right) = \id_{ K_{X} ( \mathfrak{A} )}$. 
\end{proof}

\begin{theor}\label{t:classmixed1}
Let $\mathfrak{A}_{1}$ and $\mathfrak{A}_{2}$ be in $\mathcal{B}( X_{2})$ and let $x \in \kk( X_{2}; \mathfrak{A}_{1} , \mathfrak{A}_{2} )$ be an invertible element such that $\Gamma(x)_{Y}$ is an order isomorphism for all $Y \in \mathbb{LC}(X_{2})$.  Suppose $\mathfrak{A}_{i} [ 2 ]$ is a Kirchberg algebra, $\mathfrak{A}_{i} [ 1]$ is an AF-algebra, $\mathfrak{A}_{i}$ has real rank zero, and $\mathfrak{A}_{i}[2]$ is an essential ideal of $\mathfrak{A}_{i}$.  Then there exists an $X_{2}$-equivariant isomorphism $\ftn{ \phi }{ \mathfrak{A}_1 \otimes \K }{ \mathfrak{A}_2 \otimes \K }$ such that $\kl ( \phi ) = \kl ( g_{X_{2}}^{1}(y) )$ and $K_{X_{2}} ( \phi ) = K_{X_{2}} ( y )$, where $y = \kk( X_{2} ; \id_{ \mathfrak{A}_{1} } \otimes e_{11} )^{-1} \times x \times \kk( X_{n} ; \id_{ \mathfrak{A}_{2} } \otimes e_{11} )$
\end{theor}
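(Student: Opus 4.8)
The plan is to classify by feeding the Existence Theorem~\ref{t:exist} and the Uniqueness Theorem~\ref{t:uniq1} into a two-sided approximate intertwining. Because Theorem~\ref{t:exist} requires a \emph{full} extension with \emph{stable} ideal, neither of which is hypothesized here, I would first pass to the stabilizations $\mathfrak{B}_{i} := \mathfrak{A}_{i} \otimes \K$. These again lie in $\mathcal{B}(X_{2})$ and have real rank zero, with $\mathfrak{B}_{i}[2] = \mathfrak{A}_{i}[2] \otimes \K$ a stable Kirchberg algebra in $\mathcal{N}$ and $\mathfrak{B}_{i}[1] = \mathfrak{A}_{i}[1] \otimes \K$ an AF-algebra, and $\mathfrak{B}_{i}[2]$ remains an essential ideal; moreover the extension $0 \to \mathfrak{B}_{i}[2] \to \mathfrak{B}_{i} \to \mathfrak{B}_{i}[1] \to 0$ is full by Corollary~3.24 of \cite{err:fullext}. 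Thus $\mathfrak{B}_{1},\mathfrak{B}_{2}$ satisfy the hypotheses of both Theorem~\ref{t:exist} (Case~(i)) and Theorem~\ref{t:uniq1}. The element $y \in \kk(X_{2};\mathfrak{B}_{1},\mathfrak{B}_{2})$ is invertible, being a product of the stabilization $KK(X_{2})$-equivalences with the invertible $x$, and $\Gamma(y)_{Y}$ is an order isomorphism for every $Y \in \mathbb{LC}(X_{2})$.

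Next I would run Theorem~\ref{t:exist} in both directions. Applying it to $(\mathfrak{B}_{1},\mathfrak{B}_{2},y)$ and identifying $\mathfrak{B}_{i} \otimes \K$ with $\mathfrak{B}_{i}$ along the canonical stabilization $KK(X_{2})$-equivalence $\kk(X_{2};\id_{\mathfrak{B}_{i}} \otimes e_{11})$ yields an $X_{2}$-equivariant homomorphism $\ftn{\phi}{\mathfrak{B}_{1}}{\mathfrak{B}_{2}}$ with $\kk(X_{2};\phi) = y$ and with $\phi_{\{1\}},\phi_{\{2\}}$ injective. Since $y$ is invertible and $\Gamma(y^{-1})_{Y} = (\Gamma(y)_{Y})^{-1}$ is again an order isomorphism, applying the theorem to $(\mathfrak{B}_{2},\mathfrak{B}_{1},y^{-1})$ produces in the same manner an $X_{2}$-equivariant homomorphism $\ftn{\psi}{\mathfrak{B}_{2}}{\mathfrak{B}_{1}}$ with $\kk(X_{2};\psi) = y^{-1}$ and injective corners. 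One delicate point is the bookkeeping with the stabilization equivalences, which must be arranged so that the prescribed $KK(X_{2})$-classes come out exactly as $y$ and $y^{-1}$ rather than carrying a residual stabilization twist.

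With $\phi$ and $\psi$ in hand, the compositions $\psi \circ \phi$ and $\id_{\mathfrak{B}_{1}}$ are both $X_{2}$-equivariant with injective corners and satisfy $\kk(X_{2};\psi \circ \phi) = \kk(X_{2};\phi) \times \kk(X_{2};\psi) = y \times y^{-1} = \kk(X_{2};\id_{\mathfrak{B}_{1}})$; likewise $\phi \circ \psi$ agrees with $\id_{\mathfrak{B}_{2}}$ in $\kk(X_{2};-)$. Theorem~\ref{t:uniq1} then gives that $\psi \circ \phi$ is approximately unitarily equivalent to $\id_{\mathfrak{B}_{1}}$ and $\phi \circ \psi$ to $\id_{\mathfrak{B}_{2}}$, where the conjugating unitaries live in the relevant multiplier algebras and hence preserve the $X_{2}$-structure. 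Feeding these two approximate unitary equivalences into the two-sided approximate intertwining argument produces an $X_{2}$-equivariant \stariso $\ftn{\Phi}{\mathfrak{A}_{1} \otimes \K}{\mathfrak{A}_{2} \otimes \K}$ that is approximately unitarily equivalent to $\phi$. I expect the main technical point to be checking that the intertwining can be run \emph{equivariantly} and that the limit is a genuine isomorphism; this is precisely where stability of the $\mathfrak{B}_{i}$ is essential, as it supplies the multiplier unitaries needed at each stage.

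It remains to identify the invariants of $\Phi$. Since $\Phi$ is approximately unitarily equivalent to $\phi$ as ordinary \starhoms, R{\o}rdam's result on $\kl$ recalled above gives $\kl(\Phi) = \kl(\phi)$; and as $\kk(X_{2};\phi) = y$ maps under $g_{X_{2}}^{1}$ to $\kk(\phi) = g_{X_{2}}^{1}(y)$, we obtain $\kl(\Phi) = \kl(g_{X_{2}}^{1}(y))$. For the ideal-related $K$-theory, approximate unitary equivalence by multiplier unitaries leaves $K_{X_{2}}$ unchanged by Lemma~\ref{l:unitaryidealkthy}, so $K_{X_{2}}(\Phi) = K_{X_{2}}(\phi) = \Gamma(\kk(X_{2};\phi)) = K_{X_{2}}(y)$. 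This produces the required $X_{2}$-equivariant isomorphism carrying the prescribed $\kl$- and $K_{X_{2}}$-data.
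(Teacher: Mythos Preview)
Your proposal is correct and follows essentially the same route as the paper: verify that the stabilizations satisfy the hypotheses of Theorems~\ref{t:exist}(i) and~\ref{t:uniq1}, apply Existence in both directions, apply Uniqueness to the two compositions, and run an Elliott approximate intertwining to obtain the isomorphism with the stated $\kl$- and $K_{X_{2}}$-data. The only noteworthy difference is in how $X_{2}$-equivariance of the limit isomorphism is secured: rather than tracking equivariance through the intertwining, the paper simply observes a posteriori that any isomorphism between these algebras must preserve the unique nontrivial ideal, since $\mathfrak{A}_{1}[2]\otimes\K$ is simple purely infinite while $\mathfrak{A}_{2}[1]\otimes\K$ is AF, forcing $\pi_{2}\circ\phi\vert_{\mathfrak{A}_{1}[2]\otimes\K}=0$.
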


\begin{proof}
Since $\mathfrak{A}_{i}[2]$ is a purely infinite simple $C^{*}$-algebra, $\mathfrak{A}_{i} [ 2 ]$ is either unital or stable.  Since $\mathfrak{A}_{i} [ 2 ]$ is an essential ideal of $\mathfrak{A}_{i}$, $\mathfrak{A}_{i}[2]$ is non-unital else $\mathfrak{A}_{i} [ 2]$ is isomorphic to a direct summand of $\mathfrak{A}_{i}$ which would contradict the essential assumption.  Therefore, $\mathfrak{A}_{i}[2]$ is stable.  Moreover, $\corona{ \mathfrak{A}_{i} [ 2 ] }$ is simple which implies that $0 \to \mathfrak{A}_{i} [ 2 ] \to \mathfrak{A}_{i} \to \mathfrak{A}_{i}[1] \to 0$ is a full extension.  Since $\mathfrak{A}_{i}[2]$ and $\mathfrak{A}_{i}[1]$ are nuclear $C^{*}$-algebras, $\mathfrak{A}_{i}$ is a nuclear $C^{*}$-algebra.

Let $z \in \kk( X_{2} ; \mathfrak{A}_2 \otimes \K , \mathfrak{A}_1 \otimes \K )$ such that $y \times z = [ \id_{ \mathfrak{A}_{1} \otimes \K } ]$ and $y \times z = [ \id_{ \mathfrak{A}_2 \otimes \K } ]$.  By Theorem~\ref{t:exist}, there exists an $X_{2}$-equivariant homomorphism $\ftn{ \psi_{1} }{ \mathfrak{A}_1 \otimes \K }{ \mathfrak{A}_2 \otimes \K }$ such that $\kk ( X_{2} ; \psi_{1} ) = x$, and $( \psi_{1} )_{ \{ 2 \} }$ and $( \psi_{1} )_{ \{ 1 \} }$ are injective homomorphisms.  By Theorem~\ref{t:exist}, there exists an $X_{2}$-equivariant homomorphism $\ftn{ \psi_{2} }{ \mathfrak{A}_2 \otimes \K }{ \mathfrak{A}_1 \otimes \K }$ such that $\kk ( X_{2} ; \psi_{2} ) = y$, and $( \psi_{2} )_{\{2\}}$ and $( \psi_{2} )_{\{1\}}$ are injective homomorphisms.  Using Theorem~\ref{t:uniq1} and a typical approximate intertwining argument, there exists an isomorphism $\ftn{ \phi }{ \mathfrak{A}_1 \otimes \K }{ \mathfrak{A}_2 \otimes \K }$ such that $\phi$ and $\psi_{1}$ are approximately unitarily equivalent.  

Let $\ftn { \pi_{2} }{ \mathfrak{A}_{2} }{ \mathfrak{A}_{2} [ 1] }$ be the canonical quotient map.  Then $\pi_{2} \circ \phi \vert_{ \mathfrak{A}_{1} [ 2 ] }$ is either zero or injective since $\mathfrak{A}_{1} [ 2 ]$ is simple.  Since $\mathfrak{A}_{1} [ 2 ]$ is purely infinite and $\mathfrak{A}_{2} [1]$ is an AF-algebra, we must have that $\pi_{2} \circ \phi \vert_{ \mathfrak{A}_{2}[2] } = 0$.  Thus, $\phi$ is an $X_{2}$-equivariant homomorphism.  Similarly, $\phi^{-1}$ is an $X_{2}$-equivariant homomorphism.  Hence, $\phi$ is an $X_{2}$-equivariant isomorphism.  By construction, $\kl ( \phi ) = \kl ( \psi_{1} ) =  \kl ( g_{X_{2}}^{1}(y) )$.  By Lemma~\ref{l:unitaryidealkthy}, $K_{X_{2}} ( \phi ) = K_{X_{n}} (x)$.
\end{proof}

\begin{corol}\label{c:classmixed1}
Let $\mathfrak{A}_{1}$ and $\mathfrak{A}_{2}$ be in $\mathcal{B}( X_{2})$ and let $x \in \kk( X_{2}; \mathfrak{A}_{1} , \mathfrak{A}_{2} )$ be an invertible element such that $\Gamma(x)_{Y}$ is an order isomorphism for all $Y \in \mathbb{LC}(X_{2})$.  Suppose $\mathfrak{A}_{i} [ 2 ]$ is a Kirchberg algebra, $\mathfrak{A}_{i} [ 1]$ is an AF-algebra, $\mathfrak{A}_{i}$ has real rank zero, $\mathfrak{A}_{i}[2]$ is an essential ideal of $\mathfrak{A}_{i}$, and $K_{i} ( \mathfrak{A} [ Y ] )$ and $K_{i} ( \mathfrak{B}[ Y ] )$ are finitely generated for all $Y \in \mathbb{LC} ( X_{2} )$.  Then there exists an $X_{2}$-equivariant isomorphism $\ftn{ \phi }{ \mathfrak{A}_1 \otimes \K }{ \mathfrak{A}_2 \otimes \K }$ such that $\kk ( \phi ) = \kk ( g_{X_{2}}^{1}(y) )$ and $K_{X_{2}} ( \phi ) = K_{X_{2}} ( y )$, where $y = \kk( X_{2} ; \id_{ \mathfrak{A}_{1} } \otimes e_{11} )^{-1} \times x \times \kk( X_{n} ; \id_{ \mathfrak{A}_{2} } \otimes e_{11} )$
\end{corol}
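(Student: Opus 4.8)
The plan is to obtain this as an essentially immediate strengthening of Theorem~\ref{t:classmixed1}: the hypotheses of the corollary are precisely those of the theorem together with the finite generation of all the groups $K_{i}(\mathfrak{A}_{j}[Y])$, and I would use this extra hypothesis solely to identify the invariant $\kl$ with the finer invariant $\kk$, thereby upgrading the conclusion $\kl(\phi) = \kl(g_{X_{2}}^{1}(y))$ to $\kk(\phi) = \kk(g_{X_{2}}^{1}(y))$.

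First I would apply Theorem~\ref{t:classmixed1} verbatim, producing an $X_{2}$-equivariant isomorphism $\ftn{\phi}{\mathfrak{A}_{1} \otimes \K}{\mathfrak{A}_{2} \otimes \K}$ with $\kl(\phi) = \kl(g_{X_{2}}^{1}(y))$ and $K_{X_{2}}(\phi) = K_{X_{2}}(y)$. The latter equality is already exactly one of the two claimed conclusions, so it remains only to promote the $\kl$-statement to a $\kk$-statement. Recalling the definition of $\kl$,
\begin{align*}
\kl(\mathfrak{A}_{1} \otimes \K, \mathfrak{A}_{2} \otimes \K) = \kk(\mathfrak{A}_{1} \otimes \K, \mathfrak{A}_{2} \otimes \K) / \mathrm{Pext}_{\Z}^{1}(K_{*}(\mathfrak{A}_{1} \otimes \K), K_{*+1}(\mathfrak{A}_{2} \otimes \K)),
\end{align*}
it suffices to show that the $\mathrm{Pext}$ term vanishes, for then the quotient map $\kk \to \kl$ is an isomorphism and the equality of $\kl$-classes lifts uniquely to an equality of $\kk$-classes.

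For the vanishing I would take $Y = X_{2} \in \mathbb{LC}(X_{2})$ in the finite generation hypothesis, which gives that $K_{*}(\mathfrak{A}_{1}) = K_{*}(\mathfrak{A}_{1} \otimes \K)$ is finitely generated. It is a standard fact about abelian groups that a pure extension with finitely generated quotient splits: the free part is projective, and purity allows each cyclic summand of the torsion part to be lifted to an element of the same order, yielding a section. Hence $\mathrm{Pext}_{\Z}^{1}(G, H) = 0$ whenever $G$ is finitely generated, and applying this to each of the two summands of the $\mathrm{Pext}$ term (both of which have first argument a $K$-group of $\mathfrak{A}_{1}$) finishes the argument. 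I do not expect any substantive obstacle here: all of the analytic and $K$-theoretic content is already carried out in Theorem~\ref{t:classmixed1}, and the only new ingredient is the elementary observation that finite generation annihilates the relevant $\mathrm{Pext}$ group.
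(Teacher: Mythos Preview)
Your proposal is correct and follows precisely the paper's own argument: apply Theorem~\ref{t:classmixed1} and then use that $\mathrm{Pext}_{\Z}^{1}(G,H)=0$ whenever $G$ is finitely generated to upgrade the $\kl$-equality to a $\kk$-equality. The only difference is that you supply more detail on why finite generation kills $\mathrm{Pext}$, whereas the paper simply invokes this as a known fact.
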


\begin{proof}
This follows from Theorem~\ref{t:classmixed1} and the fact that if $G$ is finitely generated, then $\mathrm{Pext}_{\Z}^{1} ( G , H ) = 0$.
\end{proof}

\subsection{Strong classification of extensions of purely infinite by $\K$}

We recall the following from \cite[p. 341]{wa:extensions}.  Let $\ftn{ \psi }{ \mathfrak{A} }{ B( \mathcal{H} ) }$ be a representation of $\mathfrak{A}$.  Let $\mathcal{H}_{e}$ denote the subspace of $\mathcal{H}$ spanned by the ranges of all compact operators in $\psi(\mathfrak{A})$.  Since $\psi( \mathfrak{A}) \cap \K$ is an ideal of $\psi( \mathfrak{A} )$, we have that $\mathcal{H}_{e}$ reduces $\pi( \mathfrak{A} )$, and so the decomposition $\mathcal{H} = \mathcal{H}_{e} \oplus \mathcal{H}_{e}^{\perp}$ induces a decomposition of $\psi$ into sub-representations $\psi = \psi_{e} \oplus \psi'$.  The summand $\psi_{e}$, considered as a representation of $\mathfrak{A}$ on $\mathcal{H}_{e}$, will be called the \emph{essential part of $\psi$} and $\mathcal{H}_{e}$ is called the \emph{essential subspace} for $\psi$.

Let $\mathfrak{B}$ be a tight $C^{*}$-algebra over $X_{2}$.  Consider the essential extension
\begin{align*}
\mathfrak{e}_{ \mathfrak{B} }: 0 \to \mathfrak{B} [2] \to \mathfrak{B} \to \mathfrak{B} [1] \to 0.
\end{align*}
If $\ftn{ \tau_{\mathfrak{e}_{ \mathfrak{B} }} }{ \mathfrak{B}[1] }{ \corona{ \mathfrak{B}[2] } }$ is the Busby invariant of $\mathfrak{e}$, then there exists an injective homomorphism $\ftn{ \sigma_{\mathfrak{e}_{ \mathfrak{B} }} }{ \mathfrak{B} }{ \multialg{ \mathfrak{B}[2] } }$ such that the diagram
\begin{align*}
\xymatrix{
0 \ar[r] & \mathfrak{B}[2] \ar[r] \ar@{=}[d] & \mathfrak{B} \ar[r]^{ \pi_{ \mathfrak{B} } } \ar[d]^{ \sigma_{ \mathfrak{e}_{ \mathfrak{B} } } } & \mathfrak{B}[1] \ar[r] \ar[d]^{ \tau_{\mathfrak{e}_{ \mathfrak{B} } } } & 0 \\
0 \ar[r] & \mathfrak{B}[2] \ar[r] & \multialg{ \mathfrak{B}[2] } \ar[r]_{\overline{\pi}_{ \mathfrak{B} }  } & \corona{ \mathfrak{B}[2] } \ar[r] & 0
}
\end{align*}
If $\mathfrak{B}[2] \cong \K$, let $\ftn{ \eta_{\mathfrak{B}} }{ \multialg{ \mathfrak{B} [ 2 ]  } }{ B( \ell^{2} ) }$ be the isomorphism extending the isomorphism $\mathfrak{B}[2] \cong \K$ and let $\ftn{ \overline{\eta}_{\mathfrak{B}} }{ \corona{ \mathfrak{B} [ 2 ]  } }{ B( \ell^{2} ) /\K}$ be the induced isomorphism.

\begin{lemma}\label{l:rep}
Let $\mathfrak{A}$ and $\mathfrak{B}$ be separable, tight $C^{*}$-algebras over $X_{2}$ such that $\mathfrak{A}[2] \cong \mathfrak{B}[2] \cong \K$.  Let $\ftn{ \psi_{1}, \psi_{2} }{ \mathfrak{A} }{ \mathfrak{B} }$ be two, full $X_{2}$-equivariant homomorphisms such that $K_{0}( ( \psi_{1})_{\{2\} } ) = K_{0} ( ( \psi_{2} )_{ \{ 2 \} } )$ and $\eta_{ \mathfrak{B} } \circ \sigma_{ \mathfrak{e}_{ \mathfrak{B} } } \circ \psi_{i}$ is a non-degenerate representation of $\mathfrak{A}$.  Then there exists a sequence of unitaries $\{ U_{n} \}_{n = 1}^{ \infty }$ in $\multialg{ \mathfrak{B} [2] }$ such that 
\begin{align*}
U_{n} ( \sigma_{\mathfrak{e}_{\mathfrak{B} } } \circ \psi_{1} )( a ) U_{n}^{*} - ( \sigma_{\mathfrak{e}_{\mathfrak{B} } } \circ \psi_{2} ) ( a ) \in \mathfrak{B}[2]
\end{align*}
for all $a \in \mathfrak{A}$ and for all $n \in \N$, and 
\begin{align*}
\lim_{ n \to \infty } \| U_{n} (\sigma_{\mathfrak{e}_{\mathfrak{B} } } \circ \psi_{1} )( a ) U_{n}^{*} - (\sigma_{\mathfrak{e}_{\mathfrak{B} } } \circ \psi_{2} )( a ) \| = 0
\end{align*}
for all $a \in \mathfrak{A}$.
\end{lemma}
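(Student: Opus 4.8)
The plan is to translate the statement into the setting of $B(\ell^2)$ and the Calkin algebra via the isomorphisms $\eta_{\mathfrak{B}}$ and $\overline{\eta}_{\mathfrak{B}}$, and then to run a relative Voiculescu-type absorption argument in the spirit of Arveson \cite{wa:extensions}. Write $\rho_i = \eta_{\mathfrak{B}} \circ \sigma_{\mathfrak{e}_{\mathfrak{B}}} \circ \psi_i \colon \mathfrak{A} \to B(\ell^2)$; by hypothesis these are non-degenerate representations. Since $\psi_i$ is $X_2$-equivariant, $\rho_i(\mathfrak{A}[2]) \subseteq \mathfrak{B}[2] \cong \K$, and since $\mathfrak{A}[2] \cong \K$ is simple and $\psi_i$ is full (so $(\psi_i)_{\{2\}}$ is injective), each $\rho_i$ restricts to a faithful representation of $\K$ inside the compacts $\K \subseteq B(\ell^2)$. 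Because $\overline{\pi}_{\mathfrak{B}}$ annihilates $\mathfrak{B}[2]$, the composite $\overline{\pi}_{\mathfrak{B}} \circ \rho_i$ kills $\mathfrak{A}[2]$ and hence factors through the Busby invariant $\tau_i = \overline{\eta}_{\mathfrak{B}} \circ \tau_{\mathfrak{e}_{\mathfrak{B}}} \circ (\psi_i)_{\{1\}}$ of the pulled-back extension. The two conclusions of the lemma together say exactly that $\rho_1$ and $\rho_2$ are approximately unitarily equivalent through unitaries of $\multialg{\mathfrak{B}[2]}$ whose conjugation-differences lie in $\mathfrak{B}[2]$ for every $n$; applying $\overline{\pi}_{\mathfrak{B}}$ to the first property shows this already forces $\tau_1$ and $\tau_2$ to be unitarily equivalent in the Calkin algebra.

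I would organize the construction of the $U_n$ around the decomposition into an essential (compact) part and a trivial (Calkin) part. For the compact part, the hypothesis $K_0((\psi_1)_{\{2\}}) = K_0((\psi_2)_{\{2\}})$ says that the two faithful representations of $\mathfrak{A}[2] \cong \K$ have equal (finite) multiplicity, since the induced map $K_0(\K) = \Z \to \Z = K_0(\K)$ is multiplication by that multiplicity and finiteness is precisely the requirement that the image land in the compacts. Hence $\rho_1|_{\mathfrak{A}[2]}$ and $\rho_2|_{\mathfrak{A}[2]}$ are unitarily equivalent, and after conjugating $\rho_1$ by a fixed unitary of $\multialg{\mathfrak{B}[2]}$ I may assume their essential subspaces coincide, say $\mathcal{H}_e$, and that $\rho_1$ and $\rho_2$ agree on $\mathfrak{A}[2]$ there. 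On $\mathcal{H}_e^{\perp}$ each $\rho_i$ annihilates $\mathfrak{A}[2]$, so it is an essential representation of $\mathfrak{A}[1]$ (its image meets $\K$ trivially), recovering the $\tau_i$.

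The heart of the argument is then the generalized Voiculescu theorem: two non-degenerate representations of the separable $C^*$-algebra $\mathfrak{A}$ whose essential parts agree on the ideal and which induce unitarily equivalent homomorphisms into the Calkin algebra are approximately unitarily equivalent, and --- crucially --- the implementing unitaries can be chosen in $\multialg{\mathfrak{B}[2]}$ with differences lying exactly in $\mathfrak{B}[2]$. Here the non-degeneracy hypothesis is what lets the essential summands absorb the trivial summands (Arveson's absorption), while fullness of $\psi_i$ supplies the weak-containment and faithfulness needed to line up the two essential representations of $\mathfrak{A}[1]$. Assembling the unitary that aligns the compact parts with the sequence produced by the absorption step yields the desired $\{U_n\}$, and checking the first property reduces to verifying that $\overline{\pi}_{\mathfrak{B}}(U_n)$ intertwines $\tau_1$ and $\tau_2$ for each $n$.

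The main obstacle is precisely this last, exact requirement: producing unitaries whose conjugation-difference lies in $\mathfrak{B}[2]$ for every $n$ (not merely asymptotically), which in turn depends on first establishing that $\tau_1$ and $\tau_2$ are genuinely unitarily equivalent in the Calkin algebra. I would isolate this as a separate step, deducing it from the matching of the ideal $K_0$-data together with the fullness of $\psi_i$ and the pure infiniteness (hence simplicity) of the quotient $\corona{\mathfrak{B}[2]}$, so that the trivial parts form absorbing extensions with the same class; the subsequent norm-convergence is then the standard Voiculescu conclusion.
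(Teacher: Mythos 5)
Your overall strategy is the one the paper uses: pass to the representations $\sigma_{i}=\eta_{\mathfrak{B}}\circ\sigma_{\mathfrak{e}_{\mathfrak{B}}}\circ\psi_{i}$ on $B(\ell^{2})$, split off the essential parts, match them using the hypothesis $K_{0}((\psi_{1})_{\{2\}})=K_{0}((\psi_{2})_{\{2\}})$ (each essential part is $k$ copies of the irreducible representation determined by $\mathfrak{A}[2]\cong\K$, where $k$ is the rank of the image of a minimal projection), and then invoke Arveson's relative Voiculescu theorem from \cite{wa:extensions}. However, you have mis-stated the input that theorem needs, and the gap this creates is precisely the step you flag as the ``main obstacle''. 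Arveson's Theorem~5(iii) requires only that $\ker\sigma_{1}=\ker\sigma_{2}$, that $\ker(\pi\circ\sigma_{1})=\ker(\pi\circ\sigma_{2})$, and that the essential parts be unitarily equivalent; it does \emph{not} require the Calkin-valued homomorphisms $\pi\circ\sigma_{i}$ to be unitarily equivalent. The two kernel conditions are immediate here ($\ker\sigma_{i}=0$ and $\ker(\pi\circ\sigma_{i})=\mathfrak{A}[2]$, by injectivity of $\overline{\eta}_{\mathfrak{B}}$, $\tau_{\mathfrak{e}_{\mathfrak{B}}}$ and $(\psi_{i})_{\{1\}}$), so once the essential parts are matched the theorem already delivers unitaries $V_{n}$ with $V_{n}\sigma_{1}(a)V_{n}^{*}-\sigma_{2}(a)\in\K$ for every $n$ and norm convergence to zero; the unitary equivalence of the Busby invariants is an \emph{output} of the theorem, not an input.

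By contrast, the separate step you propose --- first proving that $\tau_{1}$ and $\tau_{2}$ are unitarily equivalent in the Calkin algebra because ``the trivial parts form absorbing extensions with the same class'' --- cannot be carried out from the stated hypotheses. The two Busby invariants are $\overline{\eta}_{\mathfrak{B}}\circ\tau_{\mathfrak{e}_{\mathfrak{B}}}\circ(\psi_{i})_{\{1\}}$, so their classes in $\kk^{1}(\mathfrak{A}[1],\K)$ differ by the classes of $(\psi_{1})_{\{1\}}$ and $(\psi_{2})_{\{1\}}$; the lemma assumes nothing about the quotient maps beyond what follows from fullness (the only $K$-theoretic hypothesis is on the ideal), and it does not assume $\mathfrak{A}$ nuclear, so neither the ``same class'' claim nor the absorption argument is available. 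A second, smaller imprecision: the normalization ``after conjugating $\rho_{1}$ by a fixed unitary of $\multialg{\mathfrak{B}[2]}$ I may assume their essential subspaces coincide'' requires the orthogonal complements of the two essential subspaces to have equal dimension, which you have not checked; it is also unnecessary, since Arveson's theorem only asks that the essential parts be unitarily equivalent as representations on their respective essential subspaces. Replacing your final step by the correct form of Theorem~5(iii) closes the gap and recovers the paper's proof.
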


\begin{proof}
We argue as in the proof of Lemma~2.8 of \cite{LinK1zero}.  Set $\sigma_{i} = \eta_{\mathfrak{B} } \circ \sigma_{ \mathfrak{e}_{ \mathfrak{B} } } \circ \psi_{i}$.  By assumption, $\ftn{ \sigma_{i} }{ \mathfrak{A} }{ B( \ell^{2} ) }$ is a non-degenerated representation of $\mathfrak{A}$.  We claim that there exists a sequence of unitaries $\{ V_{n} \}_{ n = 1}^{ \infty }$ in $B( \ell^{2} )$ such that $V_{n} \sigma_{1} ( a ) V_{n}^{*} - \sigma_{2} ( a ) \in \K$ for all $n \in \N$ and 
\begin{align*}
\lim_{ n \to \infty } \| V_{n} \sigma_{1} ( a ) V_{n}^{*} - \sigma_{2} ( a ) \| = 0
\end{align*}
for all $a \in \mathfrak{A}$.  This will be a consequence of Theorem~5(iii) of \cite{wa:extensions}.

Let $\ftn{ \rho }{ \mathfrak{A} }{ B( \ell^{2} ) }$ be the unique irreducible faithful representation defined by the isomorphism $\mathfrak{A}[2] \cong \K$.  Since $\psi_{i}, \sigma_{\mathfrak{e}_{ \mathfrak{B} }}, \eta_{ \mathfrak{B} }$ are injective homomorphisms, $\sigma_{i}$ is injective.  Therefore, $\ker ( \sigma_{1} ) = \ker ( \sigma_{2} ) = \{ 0 \}$.  Let $\ftn{ \pi }{ B( \ell^{2} ) }{ B( \ell^{2} ) / \K }$ be the natural projection.  Note that 
\begin{align*}
 \pi \circ \sigma_{i} = \pi \circ \eta_{\mathfrak{B} } \circ \sigma_{ \mathfrak{e}_{ \mathfrak{B} } } \circ \psi_{i}  = \overline{\eta}_{\mathfrak{B} } \circ \overline{\pi}_{\mathfrak{B} } \circ \sigma_{\mathfrak{B} } \circ \psi_{i} = \overline{\eta}_{\mathfrak{B} } \circ \tau_{ \mathfrak{e}_{\mathfrak{B} } } \circ \pi_{\mathfrak{B} } \circ \psi_{i} = \overline{\eta}_{\mathfrak{B} } \circ \tau_{ \mathfrak{e}_{\mathfrak{B} } } \circ ( \psi_{i} )_{\{1\}} \circ \pi_{\mathfrak{A} } .   
\end{align*}  
It now follows that $\ker ( \pi \circ \sigma_{1} ) = \ker( \pi \circ \sigma_{2} ) = \mathfrak{A}[2]$ since $\overline{\eta}_{\mathfrak{B}}$, $\tau_{ \mathfrak{e}_{\mathfrak{B}}}$, and $( \psi_{i} )_{\{1\}}$ are injective homomorphisms.  

Let $H_{1}$ be the essential subspace of $\sigma_{1}$.  Since $\sigma_{1} ( \mathfrak{A}[2] ) \subseteq \K$ and for each $x \notin \mathfrak{A}[2]$, we have that $\sigma_{1} ( x ) \notin \K$, we have that $H_{1} = \overline{ \sigma_{1} ( \mathfrak{A}[2] ) \ell^{2} }$.  Similarly, we have that $H_{2} = \overline{ \sigma_{2} ( \mathfrak{A}[2] ) \ell^{2} }$, where $H_{2}$ is the essential subspace of $\sigma_{2}$.  Let $e$ be a minimal projection of $\mathfrak{A}[2] \cong \K$.  Suppose $\sigma_{1} ( e )$ has rank $k$.  Standard representation theory now implies that $\sigma_{1} (- ) \vert_{H_{1}}$ is unitarily equivalent to the direct sum of $k$ copies of $\rho$.  Since $K_{0}( ( \psi_{1})_{\{2\} } ) = K_{0} ( ( \psi_{2} )_{ \{ 2 \} } )$, we have that $\sigma_{1} ( e )$ is Murray-von Neumann equivalent to $\sigma_{2} ( e )$.  Hence, $\sigma_{2} ( e )$ has rank $k$.  Standard representation theory now implies that $\sigma_{2} (- ) \vert_{H_{2}}$ is unitarily equivalent to the direct sum of $k$ copies of $\rho$.  

The above paragraph imply that $\sigma_{2} (-) \vert_{H_{2}}$ and $\sigma_{1} (-) \vert_{H_{1}}$ are unitarily equivalent.  Since $\ker( \sigma_{1} )  = \ker( \sigma_{2} )$ and $\ker( \pi \circ \sigma_{1} ) = \ker( \pi \circ \sigma_{2} )$ by Theorem~5(iii) of \cite{wa:extensions}, there exists a sequence of unitaries $\{ V_{n} \}_{ n = 1}^{ \infty }$ in $B( \ell^{2} )$ such that $V_{n} \sigma_{1} ( a ) V_{n}^{*} - \sigma_{2} ( a ) \in \K$ for all $n \in \N$ and for all $a \in \mathfrak{A}$,  and 
\begin{align*}
\lim_{ n \to \infty } \| V_{n} \sigma_{1} ( a ) V_{n}^{*} - \sigma_{2} ( a ) \| = 0
\end{align*}
for all $a \in \mathfrak{A}$.  

Set $U_{n} = \eta_{\mathfrak{B}}^{-1} ( V_{n} )$.  Then $\{ U_{n} \}_{n = 1}^{ \infty }$ is a sequence of unitaries in $\multialg{ \mathfrak{B} [2] }$ such that $U_{n} ( \sigma_{ \mathfrak{e}_{ \mathfrak{B} } } \circ \psi_{1} )( a ) U_{n}^{*} - ( \sigma_{\mathfrak{e}_{\mathfrak{B} } }  \circ \psi_{2} )( a ) \in \mathfrak{B}[2]$ for all $n \in \N$ and for all $a \in \mathfrak{A}$,  and 
\begin{align*}
\lim_{ n \to \infty } \| U_{n} ( \sigma_{\mathfrak{e}_{\mathfrak{B} } } \circ \psi_{1} ) ( a ) U_{n}^{*} - ( \sigma_{\mathfrak{e}_{\mathfrak{B} } } \circ \psi_{2} ) ( a ) \| = 0
\end{align*}
for all $a \in \mathfrak{A}$.      
\end{proof}

\begin{defin}
A $C^{*}$-algebra $\mathfrak{A}$ is called \emph{weakly semiprojective} if we can always solving the $*$-homomorphism lifting problem
\begin{align*}
\xymatrix{
		& \prod_{n = N}^{\infty } \mathfrak{B}_{n} \ar[d]^{\rho_{N}} &  & ( b_{N} , b_{N+1} , \dots ) \ar@{|->}[d]   \\
\mathfrak{A} \ar[r]_-{ \phi } \ar@{-->}[ru]^-{\widetilde{\phi}} & \prod_{ n = 1}^{\infty } \mathfrak{B}_{n} / \bigoplus_{ n = 1}^{ \infty } \mathfrak{B}_{n}  & & [ (0, \dots,0, b_{N} , b_{N+1} , \dots ) ]
}
\end{align*}
and $\mathfrak{A}$ is called \emph{semiprojective} if we can always solve the lifting problem
\begin{align*}
\xymatrix{
		& \mathfrak{B} / \mathfrak{I}_{N} \ar[d]^{\rho_{N}}   \\
\mathfrak{A} \ar[r]_-{ \phi } \ar@{-->}[ru]^-{\widetilde{\phi}} & \mathfrak{B} / \overline{ \bigcup_{ n = 1}^{ \infty } \mathfrak{I}_{n} } & & ( \mathfrak{I}_{1} \unlhd \mathfrak{I}_{2} \unlhd \cdots \unlhd \mathfrak{B}) 
}
\end{align*}
\end{defin}

\begin{lemma}\label{l:rep2}
Let $\mathfrak{A}_{0}$ be  a unital, separable, nuclear, tight $C^{*}$-algebra over $X_{2}$ such that $\mathfrak{A}_{0}[2] \cong \K$ and $\mathfrak{A}_{0}$ has the stable weak cancellation property.  Set $\mathfrak{A} = \mathfrak{A}_{0} \otimes \K$.  Suppose $\ftn{ \beta }{ \mathfrak{A} }{ \mathfrak{A} }$ is a full $X_{2}$-equivariant homomorphism such that $K_{X_{2}} ( \beta ) = K_{X_{2}} ( \id_{ \mathfrak{A} } )$ and $\beta_{ \{ 1 \} } = \id_{ \mathfrak{A}[1] }$.  Then there exists a sequence of contractive, completely positive, linear maps $\{ \ftn{ \alpha_{n} }{ \mathfrak{A}  }{ \mathfrak{A}} \}_{ n = 1}^{\infty}$ such that 
\begin{itemize}
\item[(1)] $\alpha_{n} \vert_{ e_{n} \mathfrak{A} e_{n} }$ is a homomorphism for all $n \in \N$ and 

\item[(2)] for all $a \in \mathfrak{A}$,
\begin{align*}
\lim_{ n \to \infty } \| \alpha_{n} \circ \beta (a)  - a \| = 0
\end{align*}
\end{itemize}
where $e_{n} = \sum_{ k = 1}^{n} 1_{ \mathfrak{A}_{0}} \otimes e_{kk}$ and $\{ e_{ij} \}_{i,j}$ is a system of matrix units for $\K$.
 If, in addition, $\mathfrak{A}$ is assumed to be weakly semiprojective, then $\alpha_{n}$ can be chosen to be a homomorphism for all $n \in \N$.
\end{lemma}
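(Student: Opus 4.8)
The plan is to exploit two structural consequences of the hypotheses. First, since $\mathfrak{A}_{0}$ is unital, each $e_{n}=\sum_{k=1}^{n}1_{\mathfrak{A}_{0}}\otimes e_{kk}$ genuinely lies in $\mathfrak{A}=\mathfrak{A}_{0}\otimes\K$; the corners $e_{n}\mathfrak{A} e_{n}\cong\mathsf{M}_{n}(\mathfrak{A}_{0})$ are unital with dense union, and $\{e_{n}\}$ is an increasing approximate unit, so $e_{m}ye_{m}\to y$ for every $y\in\mathfrak{A}$. Second, because $X_{2}$ carries the Alexandrov topology the point $\{2\}$ is dense, hence $\mathfrak{A}[2]=\mathfrak{A}_{0}[2]\otimes\K\cong\K$ is an essential ideal and the canonical embedding $\sigma:=\sigma_{\mathfrak{e}_{\mathfrak{A}}}\colon\mathfrak{A}\hookrightarrow\multialg{\mathfrak{A}[2]}$ is injective. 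The assumption $\beta_{\{1\}}=\id_{\mathfrak{A}[1]}$ forces $\pi_{\mathfrak{A}}(\beta(a)-a)=0$, i.e. $\beta(a)-a\in\mathfrak{A}[2]$ for all $a$, so $\beta$ is an $\mathfrak{A}[2]$-perturbation of the identity; and $K_{X_{2}}(\beta)=K_{X_{2}}(\id)$ gives $K_{0}(\beta_{\{2\}})=\id$ together with $[\beta(e_{n})]=[e_{n}]$ in $K_{0}(\mathfrak{A})$.

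Next I would apply Lemma~\ref{l:rep} to the pair $\psi_{1}=\id_{\mathfrak{A}}$, $\psi_{2}=\beta$: both are full $X_{2}$-equivariant homomorphisms, the equality $K_{0}((\psi_{1})_{\{2\}})=K_{0}((\psi_{2})_{\{2\}})$ is the $\{2\}$-component of $K_{X_{2}}(\beta)=K_{X_{2}}(\id)$, and the non-degeneracy requirement is verified after passing to essential subspaces, using that $\mathfrak{A}[2]\cong\K$ acts non-degenerately. This produces unitaries $U_{n}\in\multialg{\mathfrak{A}[2]}$ with $U_{n}\sigma(a)U_{n}^{*}-\sigma(\beta(a))\in\mathfrak{A}[2]$ and $\|U_{n}\sigma(a)U_{n}^{*}-\sigma(\beta(a))\|\to0$ for each $a$. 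The key point to extract is that $U_{n}^{*}\sigma(\beta(a))U_{n}=\sigma(a)+\sigma(r_{n,a})$ with $r_{n,a}\in\mathfrak{A}[2]$ and $\|r_{n,a}\|\to0$; thus $\mathrm{Ad}(U_{n}^{*})$ asymptotically inverts $\beta$ and, crucially, carries $\sigma(\beta(\mathfrak{A}))$ back into $\sigma(\mathfrak{A})$, even though it does not preserve $\sigma(\mathfrak{A})$ in general.

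Now fix $n$, choose $U=U_{k(n)}$ that works to within $1/n$ on a finite generating set $F_{n}\ni e_{n}$ of $e_{n}\mathfrak{A} e_{n}$ (with $F_{n}$ increasing and $\bigcup_{n}F_{n}$ dense), and build $\alpha_{n}$ by compressing $\mathrm{Ad}(U^{*})$ into the corner. On $\beta(\mathfrak{A})$ the formula $x\mapsto\sigma^{-1}\bigl(\sigma(e_{n})\,U^{*}\sigma(x)U\,\sigma(e_{n})\bigr)$ lands in $e_{n}\mathfrak{A} e_{n}\subseteq\mathfrak{A}$, and for $a\in e_{m}\mathfrak{A} e_{m}$ with $n\ge m$ it evaluates to $e_{n}(a+r_{n,a})e_{n}=a+e_{n}r_{n,a}e_{n}\to a$, giving condition~(2) on the dense set $\bigcup_{n}F_{n}$ and hence, by $\|\alpha_{n}\|\le1$, for all $a\in\mathfrak{A}$. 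The task is then to realise this compression as a single contractive completely positive $\alpha_{n}\colon\mathfrak{A}\to\mathfrak{A}$ that is moreover an honest $*$-homomorphism on $e_{n}\mathfrak{A} e_{n}$: here the relation $[\beta(e_{n})]=[e_{n}]$, fullness, and the stable weak cancellation property of $\mathfrak{A}$ supply the Murray--von Neumann equivalence of projections that lets one express the compression by an (partial) isometry inside $\mathfrak{A}$, forcing multiplicativity on the corner while keeping the values in $\mathfrak{A}$.

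Finally, for the additional assertion note that multiplicativity on $e_{n}\mathfrak{A} e_{n}$ together with $e_{m}ye_{m}\to y$ and $\|\alpha_{n}\|\le 1$ yields $\alpha_{n}(xy)-\alpha_{n}(x)\alpha_{n}(y)\to0$ for all $x,y$, so $(\alpha_{n})$ is an asymptotically multiplicative, asymptotically $*$-preserving sequence of contractions and $a\mapsto(\alpha_{n}(a))_{n}$ descends to a $*$-homomorphism $\mathfrak{A}\to\prod_{n}\mathfrak{A}/\bigoplus_{n}\mathfrak{A}$. When $\mathfrak{A}$ is weakly semiprojective this lifts, after discarding finitely many indices, to a $*$-homomorphism into $\prod_{n\ge N}\mathfrak{A}$ whose components $\widetilde{\alpha}_{n}$ are genuine homomorphisms with $\|\widetilde{\alpha}_{n}(a)-\alpha_{n}(a)\|\to0$, so $\widetilde{\alpha}_{n}\circ\beta\to\id$ still holds. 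I expect the main obstacle to be the third step: transferring the approximate unitary equivalence, which a priori lives in $\multialg{\mathfrak{A}[2]}\cong B(\ell^{2})$, into completely positive maps that genuinely take values in $\mathfrak{A}$ and are multiplicative on $e_{n}\mathfrak{A} e_{n}$. Reconciling the two multiplier pictures is precisely where the approximate-unit property of $\{e_{n}\}$, the $\mathfrak{A}[2]$-valuedness of $\beta-\id$, and stable weak cancellation must be combined; a secondary subtlety is the verification of the non-degeneracy hypothesis of Lemma~\ref{l:rep} for $\beta$.
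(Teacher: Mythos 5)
Your outline does follow the paper's strategy --- Lemma~\ref{l:rep}, transfer to the corners $e_n\mathfrak{A}e_n$, Murray--von Neumann equivalence of $\beta(e_n)$ and $e_n$ via stable weak cancellation, and an approximate-multiplicativity argument for the weak semiprojectivity claim --- but the two steps you yourself flag as the ``main obstacle'' and the ``secondary subtlety'' are exactly where the proof lives, and your proposed mechanisms for them do not work as stated. First, you apply Lemma~\ref{l:rep} globally to $\psi_1=\id_{\mathfrak{A}}$, $\psi_2=\beta$ on the stable algebra $\mathfrak{A}$. The non-degeneracy hypothesis of that lemma genuinely fails here in general: since $\mathfrak{A}$ is non-unital, $\sigma_{\mathfrak{e}_{\mathfrak{A}}}\circ\beta$ can have a proper essential subspace (already $\beta_{\{2\}}$ could be conjugation by a proper isometry; fullness and $K_0(\beta_{\{2\}})=\id$ do not rule this out). ``Passing to essential subspaces'' is not a repair, because the conclusion you need concerns unitaries of $\multialg{\mathfrak{A}[2]}$, not operators between $\ell^2$ and a proper subspace $H_2$. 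Second, even granted the unitaries $U$, the compression $x\mapsto\sigma^{-1}\bigl(\sigma(e_n)U^*\sigma(x)U\sigma(e_n)\bigr)$ is only completely positive; nothing in $[\beta(e_n)]=[e_n]$ or stable weak cancellation ``forces multiplicativity on the corner,'' and you give no construction that does.

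The paper resolves both problems with one move absent from your write-up: it cuts down to the unital corner \emph{before} invoking Lemma~\ref{l:rep}. Stable weak cancellation plus stability yield a unitary $v_n$ in the unitization of $\mathfrak{A}$ with $v_n\beta(e_n)v_n^*=e_n$, so $\beta_n=\mathrm{Ad}(v_n)\circ\beta$ restricts to a \emph{unital} endomorphism of $\mathfrak{A}_n=e_n\mathfrak{A}e_n$, which is itself a unital tight $C^*$-algebra over $X_2$ with $\mathfrak{A}_n[2]\cong\K$ (fullness of the corner extension, Lemma~1.5 of \cite{ERRshift}). Unitality makes the associated representation non-degenerate, so Lemma~\ref{l:rep} applies legitimately to $\beta_n$ versus $\id_{\mathfrak{A}_n}$, producing unitaries $W_{k,n}\in\multialg{\mathfrak{A}_n[2]}$; these are promoted to unitaries $V_{k,n}=(\widetilde W_{k,n}+1-\sigma_{\mathfrak{e}}(e_n))\sigma_{\mathfrak{e}}(v_n)$ of $\multialg{\mathfrak{A}[2]}$, and the hypothesis $\beta_{\{1\}}=\id_{\mathfrak{A}[1]}$ (so $\beta(x)-x\in\mathfrak{A}[2]$) is precisely what guarantees that $\mathrm{Ad}(V_{k,n})\circ\sigma_{\mathfrak{e}}$ carries $\sigma_{\mathfrak{e}}(e_n\mathfrak{A}e_n)$ into $\sigma_{\mathfrak{e}}(\mathfrak{A})$. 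Hence the map $\sigma_{\mathfrak{e}}^{-1}\circ\mathrm{Ad}(V_{k,n})\circ\sigma_{\mathfrak{e}}$ is an honest $*$-homomorphism on the corner --- no compression occurs, so multiplicativity is automatic --- and the contractive completely positive extension to all of $\mathfrak{A}$ enters only afterwards, where condition~(1) does not require multiplicativity. Your treatment of the final weak-semiprojectivity step is fine once (1) and (2) are in hand.
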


\begin{proof}
Since $\beta$ is a full $X_{2}$-equivariant homomorphism and the ideal in $\mathfrak{A}$ generated by $e_{n}$ is $\mathfrak{A}$, we have that the ideal in $\mathfrak{A}$ generated by $\beta( e_{n} )$ is $\mathfrak{A}$.  Since $K_{X_{2}} ( \beta ) = K_{X_{2}} ( \id_{ \mathfrak{A} } )$, we have that $[ \beta( e_{n} ) ] = [ e_{n} ]$ in $K_{0} ( \mathfrak{A} )$.  It now follows that $\beta ( e_{n} )$ and $e_{n}$ are Murray-von Neumann equivalent since $\mathfrak{A}_{0}$ has the stable weak cancellation property.  Since $\mathfrak{A}$ is stable, there exists a unitary $v_{n}$ in the unitization of $\mathfrak{A}$ such that $v_{n} \beta( e_{n} ) v_{n}^{*} = e_{n}$.  

Fix $n \in \N$.  Let $\mathfrak{e}_{ n }$ be the extension $0 \to e_{n} \mathfrak{A}[2] e_{n} \to e_{n} \mathfrak{A}e_{n} \to \overline{ e }_{n} \mathfrak{A}[1] \overline{e}_{n} \to 0$.  By Lemma~1.5 of \cite{ERRshift}, $\mathfrak{e}$ is a full extension.  Therefore, $\sigma_{ \mathfrak{e} } ( e_{n} )$ is Murray-von Neumann equivalent to $1_{ \multialg{ \mathfrak{A}[2] } }$.  Hence, $e_{n} \mathfrak{A}[2] e_{n} \cong \mathfrak{A}[2] \cong \K$.  Set $\mathfrak{A}_{n} = e_{n} \mathfrak{A} e_{n}$ and define $\ftn{ \beta_{n} }{ \mathfrak{A}_{n} }{ \mathfrak{A}_{n} }$ by $\beta_{n} ( x ) =\mathrm{Ad} ( v_{n} ) \circ \beta (x)$.  Then $\beta_{n}$ is a unital, full $X_{2}$-equivariant homomorphism.  Since  $\eta_{ \mathfrak{A}_{n} } \circ \sigma_{ \mathfrak{e}_{n} } \circ \beta_{n}$ is a unital representation of $\mathfrak{A}_{n}$, the closed subspace of $\ell^{2}$ generated by $\setof{ ( \eta_{ \mathfrak{A}_{n} } \circ \sigma_{ \mathfrak{e}_{n} } \circ \beta_{n} )(x) \xi }{ x \in \mathfrak{A}_{n} , \xi \in \ell^{2} }$ is $\ell^{2}$.  Therefore, $\eta_{\mathfrak{A}_{n} } \circ \sigma_{ \mathfrak{e}_{n} } \circ \beta_{n}$ is a non-degenerate representation of $\mathfrak{A}_{n }$.  

Since $K_{X_{2}} ( \beta ) = K_{X_{2}} ( \id_{ \mathfrak{A} } )$ and the $X_{2}$-equivariant embedding of $\mathfrak{A}_{n}$ as a sub-algebra of $\mathfrak{A}$ induces an isomorphism in ideal related $K$-theory, we have that $K_{X_{2}} ( \beta_{n} ) = K_{X_{2}} ( \id_{ \mathfrak{A}_{n} }  )$.  By Lemma~\ref{l:rep}, there exists a sequence of unitaries $W_{k,n} \in \multialg{ \mathfrak{A}_{n}[2] }$ such that 
\begin{align*}
( \mathrm{Ad} ( W_{k,n}  ) \circ \sigma_{ \mathfrak{e}_{n} } \circ \beta_{n} )(x) - \sigma_{ \mathfrak{e}_{n} } ( x ) \in \mathfrak{A}_{n}[2]
\end{align*}  
for all $x \in \mathfrak{A}_{n}$ and for all $k \in \N$, and
\begin{align*}
\lim_{ k \to \infty }  \| ( \mathrm{Ad} ( W_{k,n}  ) \circ \sigma_{ \mathfrak{e}_{n} } \circ \beta_{n} )(x) - \sigma_{ \mathfrak{e}_{n} } ( x ) \| = 0
\end{align*}  
for all $x \in \mathfrak{A}_{n}$.

Note that $\multialg{ \mathfrak{A}_{n}[2] } \cong \sigma_{ \mathfrak{e} } ( e_{n}) \multialg{ \mathfrak{A}[2] } \sigma_{ \mathfrak{e} } ( e_{n})$ with an isomorphism mapping $ \mathfrak{A}_{n}[2]$ onto $e_{n} \mathfrak{A}[2] e_{n}$.  Thus, we get a partial isometry $\widetilde{W}_{k,n}$ in $ \multialg{ \mathfrak{A}[2] }$ such that $\widetilde{W}_{k,n}^{*} \widetilde{W}_{k,n} = \widetilde{W}_{k,n} \widetilde{W}_{k,n}^{*} = \sigma_{ \mathfrak{e} } ( e_{n})$ and 
\begin{align*}
( \mathrm{Ad} ( \widetilde{W}_{k,n}  ) \circ \sigma_{ \mathfrak{e} } \circ \mathrm{Ad}(v_{n}) \circ \beta )(x) - \sigma_{ \mathfrak{e} } ( x ) \in \mathfrak{A}[2]
\end{align*}  
for all $x \in \mathfrak{A}_{n}$ and for all $k \in \N$, and
\begin{align*}
\lim_{ k \to \infty }  
\| ( \mathrm{Ad} ( \widetilde{W}_{k,n}  ) \circ \sigma_{ \mathfrak{e} } \circ \mathrm{Ad}(v_{n}) \circ \beta )(x) - \sigma_{ \mathfrak{e} } ( x ) \| = 0
\end{align*}  
for all $x \in \mathfrak{A}_{n}$.  

Set $V_{k,n} = ( \widetilde{W}_{k,n} + 1_{ \multialg{ \mathfrak{A}[2] } } - \sigma_{ \mathfrak{e} } ( e_{n} ) ) \sigma_{\mathfrak{e}} ( v_{n} )$.  Then $V_{k,n}$ is a unitary in $\multialg{ \mathfrak{A}[2]}$ such that 
\begin{align*}
( \mathrm{Ad} ( V_{k,n}  ) \circ \sigma_{ \mathfrak{e} } \circ \beta )(x) - \sigma_{ \mathfrak{e} } ( x ) \in \mathfrak{A}[2]
\end{align*}  
for all $x \in e_{n} \mathfrak{A} e_{n}$ and for all $k \in \N$, and
\begin{align*}
\lim_{ k \to \infty }  
\| ( \mathrm{Ad} ( V_{k,n}  ) \circ \sigma_{ \mathfrak{e} } \circ \beta )(x) - \sigma_{ \mathfrak{e} } ( x ) \| = 0
\end{align*}  
for all $x \in e_{n} \mathfrak{A} e_{n}$.  A consequence of the first part is that $( \mathrm{Ad} ( V_{k,n}  ) \circ \sigma_{ \mathfrak{e} } \circ \beta )(x) \in \sigma_{ \mathfrak{e} } ( e_{n} \mathfrak{A} e_{n} ) + \mathfrak{A}[2]$  for all $x \in e_{n} \mathfrak{A}e_{n}$.  Since $\beta_{\{1\}}  = \id_{ \mathfrak{A}[2] }$, we have that  $x - \beta( x ) \in \mathfrak{A}[2]$ for all $x \in e_{n}\mathfrak{A} e_{n}$.  Therefore,
\begin{align*}
\mathrm{Ad} ( V_{k,n} ) ( \sigma_{\mathfrak{e}} (x) ) = \mathrm{Ad} ( V_{k,n} ) \circ \sigma_{ \mathfrak{e} } ( x - \beta(x) ) + \mathrm{Ad} ( V_{k,n} ) \circ \beta (x) \in  \sigma_{ \mathfrak{e} } ( e_{n} \mathfrak{A} e_{n} ) + \mathfrak{A}[2]
\end{align*}  
Thus, $\alpha_{k,n} = \sigma_{\mathfrak{e} }^{-1} \circ ( \mathrm{Ad} ( V_{k,n}  ) \circ \sigma_{ \mathfrak{e} } \circ \mathrm{Ad}(v_{n}) ) \vert_{  e_{n} \mathfrak{A} e_{n} }$ is a homomorphism from $e_{n} \mathfrak{A} e_{n}$ to $\mathfrak{A}$. 

Since
\begin{align*}
\lim_{ k \to \infty }  
\| ( \mathrm{Ad} ( V_{k,n}  ) \circ \sigma_{ \mathfrak{e} } \circ \beta )(x) - \sigma_{ \mathfrak{e} } ( x ) \| = 0
\end{align*}  
for all $x \in e_{n} \mathfrak{A} e_{n}$ and $e_{n} \mathfrak{A} e_{n} \subseteq e_{n+1} \mathfrak{A} e_{n+1}$, there exists a strictly increasing sequence $\{ k(n) \}_{ n = 1}^{\infty}$ of positive integers such that 
\begin{align*}
\lim_{ n \to \infty } \| \alpha_{ k(n), n } \circ \beta(x) - x \| = 0
\end{align*}
for all $x \in \bigcup_{ n = 1}^{ \infty } e_{ n } \mathfrak{A} e_{n}$.  Let $\alpha_{n}$ be a completely, contractive, positive linear extension of $\alpha_{ k(n) , n }$.  Since $\bigcup_{ n = 1}^{ \infty } e_{ n } \mathfrak{A} e_{n}$ is dense in $\mathfrak{A}$, we have that 
\begin{align*}
\lim_{ n \to \infty } \| \alpha_{ n } \circ \beta(x) - x \| = 0
\end{align*}
for all $x \in  \mathfrak{A}$.  We have just proved the first part of the lemma. 

We now show that $\alpha_{n}$ can be chosen to be a homomorphism provide that $\mathfrak{A}$ is weakly semiprojective.  Suppose $\mathfrak{A}$ is weakly semiprojective.  Let $\epsilon > 0$ and $\mathcal{F}$ be a finite subset of $\mathfrak{A}$.  By Theorem~2.4 of \cite{sepBDF} (see also Definition~2.1 and Theorem~2.3 of \cite{hl:weaklysemi}, and Theorem~19.1.3 of \cite{loringlift}), there exist a $\delta > 0$ and a finite subset $\mathcal{G}$ of $\mathfrak{A}$ such that for any $C^{*}$-algebra $\mathfrak{B}$ and any contractive, completely positive, linear map $\ftn{L}{\mathfrak{A} }{ \mathfrak{B} }$ such that 

\begin{align*}
\| L( a b) - L(a)L(b) \| < \delta
\end{align*}
for all $a , b \in \mathcal{G}$, there exists a homomorphism $\ftn{ h }{ \mathfrak{A} }{ \mathfrak{B} }$ such that 
\begin{align*}
\| h(x) - L(x) \| < \frac{ \epsilon }{ 2 }
\end{align*}
for all $x \in \beta ( \mathcal{F} )$.

Without loss of generality, we may assume that $\epsilon < 1$ and $\delta < 1$.  Set 
\begin{align*}
M &= 1+\max \left(\setof{ \| a \| }{ a\in \mathcal{G}}\cup\setof{ \| x \|}{ x \in \mathcal{F} }\right)
\end{align*}
Since $e_{n} \mathfrak{A} e_{n} \subseteq e_{n+1} \mathfrak{A} e_{n+1}$ and $\bigcup_{ n = 1}^{ \infty  } e_{n} \mathfrak{A} e_{n}$ is dense in $\mathfrak{A}$, there exist $n \in \N$ and a finite subset $\mathcal{H} \subseteq e_{n} \mathfrak{A} e_{n}$ such that for each $a \in \mathcal{G}$, there exists $y \in \mathcal{H}$ such that $\| a - y \| < \frac{ \delta }{ 4M}$ and 
\begin{align*}
\| \alpha_{n} \circ \beta(x) - x \| < \frac{ \epsilon }{ 2 }
\end{align*}
for all $x \in \mathcal{F}$.
Let $a, b \in \mathcal{G}$.  Choose $x, y \in \mathcal{H} \subseteq e_{n} \mathfrak{A} e_{n}$ such that $\| a - x \| < \frac{ \delta }{ 4M }$ and $\| b - y \| < \frac{ \delta }{ 4M }$.  Note that $\| x\| \leq 1 + \| a \| \leq M$ and $\| y \| \leq 1 + \| b \| \leq M$.  Then 
\begin{align*}
\| \alpha_{n} ( ab) - \alpha_{n}(a) \alpha_{n} (b) \| &= \| \alpha_{n} ( ab - xb + xb - xy ) + \alpha_{n}(x y) - \alpha_{n} ( a) \alpha_{n} (b) \| \\
&\leq \| b \| \| a - x \| + \| x \| \| b - y \|  \\
&\qquad + \| \alpha_{n}(x) \alpha_{n}(y) - \alpha_{n}(x) \alpha_{n}(b) \| \\
&\qquad\quad+ \| \alpha_{n} (x) \alpha_{n}(b) - \alpha_{n}(a) \alpha_{n}(b) \| \\
&\leq 2M \| a - x \| + 2M \| b - y \| \\
&< 4M \frac{ \delta }{ 4M } = \delta.
\end{align*}
By the choice of $\delta$ and $\mathcal{G}$, there exists a homomorphism $\ftn{ \psi }{ \mathfrak{A} }{ \mathfrak{A} }$ such that 
\begin{align*}
\| \psi ( t ) - \alpha_{n}(t) \| < \frac{ \epsilon }{ 2 }
\end{align*}    
for all $t \in \beta ( \mathcal{F} )$.  Let $x \in \mathcal{F}$.  Then 
\begin{align*}
\| \psi \circ \beta(x) - x \| \leq \| \psi ( \beta(x) )  -  \alpha_{n} ( \beta (x) ) \| + \| \alpha_{n} ( \beta(x) ) - x \| < \frac{ \epsilon }{ 2 } + \frac{ \epsilon }{ 2 } = \epsilon.  
\end{align*}

We have just shown that for every $\epsilon > 0$ and for every finite subset $\mathcal{F}$ of $\mathfrak{A}$, there exists a homomorphism $\ftn{ \psi }{ \mathfrak{A} }{ \mathfrak{A} }$ such that 
\begin{align*}
\| \psi \circ \beta(x) - x \| < \epsilon
\end{align*}
for all $x \in \mathcal{F}$.  Consequently, there exists a sequence of endomorphisms $\{ \ftn{\psi_{n} }{ \mathfrak{A} }{ \mathfrak{A} } \}_{ n = 1}^{\infty}$ such that
\begin{align*}
\lim_{ n \to \infty } \| \psi_{n} \circ \beta(x) - x \| = 0
\end{align*}
for all $x \in \mathfrak{A}$ since $\mathfrak{A}$ is separable.
\end{proof}

To prove a uniqueness theorem involving tight $C^{*}$-algebras $\mathfrak{A}$ over $X_{2}$, we require that $\mathfrak{A}[1]$ belongs to a class of $C^{*}$-algebras whose injective homomorphisms between two objects in this class are classified by $\kk$.   

\begin{defin}\label{d:approxunitkk}
We will be interested in classes $\mathcal{C}$ of separable, nuclear, simple $C^{*}$-algebras  satisfying the following property that if $\mathfrak{A} , \mathfrak{B} \in \mathcal{C}$ and $\ftn{ \phi , \psi }{ \mathfrak{A} \otimes \K }{ \mathfrak{B} \otimes \K }$ are two injective homomorphisms such that $\kk ( \phi ) = \kk ( \psi )$, then $\phi$ and $\psi$ are approximately unitarily equivalent.
\end{defin}

\begin{remar}
$ $
\begin{itemize}
\item[(1)] By Theorem~4.1.3 of \cite{phillipspureinf} if $\mathcal{C}$ is the class of Kirchberg algebras, then $\mathcal{C}$ satisfies the property in Definition~\ref{d:approxunitkk}.

\item[(2)]  Let $\mathcal{C}$ be the class of unital, separable, nuclear, simple tracially AF $C^{*}$-algebras in $\mathcal{N}$.  Then $\mathcal{C}$ satisfies the property in Definition~\ref{d:approxunitkk}. 
\end{itemize}
\end{remar}

\begin{theor}(Uniqueness Theorem~2) \label{t:uniq2}
Let $\mathcal{C}$ be a class of $C^{*}$-algebras satisfying the property in Definition~\ref{d:approxunitkk} and let $\mathfrak{A}$ be a unital, separable, nuclear, tight $C^{*}$-algebra over $X_{2}$ such that $\mathfrak{A}[2] \cong \K$ and $\mathfrak{A}[1] \in \mathcal{C}$.  Suppose $\mathfrak{A} \otimes \K$ is semiprojective and $\mathfrak{A}$ has the stable weak cancellation property.  Let $\ftn{ \phi }{ \mathfrak{A} \otimes \K }{ \mathfrak{A} \otimes \K }$ be a full $X_{2}$-equivariant homomorphism such that $\kk ( X_{2} ; \phi ) = \kk (  X_{2} ; \id_{ \mathfrak{A} \otimes \K} )$.  Then there exists a sequence of full $X_{2}$-equivariant endomorphisms $\{ \ftn{ \alpha_{n} }{ \mathfrak{A} \otimes \K }{ \mathfrak{A} \otimes \K } \}_{ n = 1}^{\infty}$ such that $\kk (  X_{2} ; \alpha_{n} ) = \kk ( X_{2} ; \id_{ \mathfrak{A} \otimes \K } )$ and 
\begin{align*}
\lim_{ n \to \infty } \| (\alpha_{n} \circ \phi)(x) - x \| = 0
\end{align*}  
for all $x \in \mathfrak{A} \otimes \K$.
\end{theor}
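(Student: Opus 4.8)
The strategy is to reduce to Lemma~\ref{l:rep2}, whose proof crucially uses that the quotient part of the map is \emph{literally} the identity, and then to read off the asserted properties of the left inverses it produces. Write $\pi\colon\mathfrak{A}\otimes\K\to\mathfrak{A}[1]\otimes\K$ for the quotient map, so that $\pi\circ\phi=\phi_{\{1\}}\circ\pi$. First I would apply the restriction functor $r_{X_{2}}^{\{1\}}$ to the hypothesis $\kk(X_{2};\phi)=\kk(X_{2};\id_{\mathfrak{A}\otimes\K})$ to obtain $\kk(\phi_{\{1\}})=\kk(\id_{\mathfrak{A}[1]\otimes\K})$. Since $\phi$ is full $X_{2}$-equivariant, $\phi_{\{1\}}$ is injective by Remark~\ref{r:X2hom}; as $\mathfrak{A}[1]\in\mathcal{C}$, the defining property of $\mathcal{C}$ in Definition~\ref{d:approxunitkk} applies to the two injective homomorphisms $\phi_{\{1\}},\id_{\mathfrak{A}[1]\otimes\K}$ and shows they are approximately unitarily equivalent: there are unitaries $w_{m}\in\multialg{\mathfrak{A}[1]\otimes\K}$ with $\mathrm{Ad}(w_{m})\circ\phi_{\{1\}}\to\id_{\mathfrak{A}[1]\otimes\K}$ in point--norm.

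The key step, and the main obstacle, is to promote this approximate statement on the quotient to an exact one on $\mathfrak{A}\otimes\K$, replacing $\phi$ by a full $X_{2}$-equivariant homomorphism $\gamma$ with $\gamma_{\{1\}}=\id_{\mathfrak{A}[1]\otimes\K}$. Because $\mathfrak{A}[2]\otimes\K\cong\K$ is a $\sigma$-unital ideal and $\mathfrak{A}\otimes\K$ is stable, each $w_{m}$ lifts to a unitary $u_{m}\in\multialg{\mathfrak{A}\otimes\K}$, so that $(\mathrm{Ad}(u_{m})\circ\phi)_{\{1\}}=\mathrm{Ad}(w_{m})\circ\phi_{\{1\}}\to\id$; thus $\mathrm{Ad}(u_{m})\circ\phi$ has quotient part converging to the identity while agreeing with $\phi$ on the ideal up to $\mathrm{Ad}(u_{m})$. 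I would then invoke semiprojectivity of $\mathfrak{A}\otimes\K$, together with the BDF-type absorption underlying Lemma~\ref{l:rep}, to perturb $\mathrm{Ad}(u_{m})\circ\phi$ (for $m$ large) to a genuine homomorphism $\gamma=\gamma_{m}$ with $\gamma_{\{1\}}=\id$ exactly and $\gamma$ close to $\mathrm{Ad}(u_{m})\circ\phi$ on a prescribed finite set. This is delicate precisely because Lemma~\ref{l:rep2} requires $\gamma_{\{1\}}=\id$ on the nose: its proof uses $\beta_{\{1\}}=\id$ to guarantee $x-\beta(x)\in\mathfrak{A}[2]$, so that $\mathrm{Ad}(V_{k,n})\circ\sigma_{\mathfrak{e}}\circ\beta$ lands in $\sigma_{\mathfrak{e}}(e_{n}\mathfrak{A}e_{n})+\mathfrak{A}[2]$, and a merely approximate identity on the quotient will not do. Being close to the full map $\mathrm{Ad}(u_{m})\circ\phi$, the homomorphism $\gamma$ is again full $X_{2}$-equivariant by the perturbation clause of Remark~\ref{r:X2hom}, and by semiprojectivity shares its $\kk(X_{2};-)$-class, which equals $\kk(X_{2};\id)$ by Lemma~\ref{l:unitaryidealkthy}.

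With $\gamma=\gamma_{m}$ in hand I would apply Lemma~\ref{l:rep2}, taking the role of its $\mathfrak{A}_{0}$ to be the algebra $\mathfrak{A}$ of the present theorem, so that its $\mathfrak{A}_{0}\otimes\K$ is our $\mathfrak{A}\otimes\K$; the requirements $\mathfrak{A}[2]\cong\K$, stable weak cancellation, $K_{X_{2}}(\gamma)=K_{X_{2}}(\id)$ and $\gamma_{\{1\}}=\id$ all hold. Since $\mathfrak{A}\otimes\K$ is semiprojective it is weakly semiprojective, so the lemma yields genuine endomorphisms $\alpha_{m,n}'$ with $\alpha_{m,n}'\circ\gamma_{m}\to_{n}\id$ in point--norm. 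As each $\alpha_{m,n}'$ is contractive, closeness of $\gamma_{m}$ to $\mathrm{Ad}(u_{m})\circ\phi$ gives $\alpha_{m,n}'\circ\mathrm{Ad}(u_{m})\circ\phi$ close to $\alpha_{m,n}'\circ\gamma_{m}$, hence close to $\id$. Setting $\alpha:=\alpha_{m,n}'\circ\mathrm{Ad}(u_{m})$, each such $\alpha$ is $X_{2}$-equivariant, since the construction in Lemma~\ref{l:rep2} is built from ideal-preserving maps ($\sigma_{\mathfrak{e}}$, $\mathrm{Ad}(v_{n})$, $\mathrm{Ad}(V_{k,n})$) and $\mathrm{Ad}(u_{m})$ preserves every ideal; moreover $\alpha_{\{1\}}$ and $\alpha_{\{2\}}$ are nonzero because $\alpha\circ\phi$ is near $\id$ and $\phi_{\{1\}},\phi_{\{2\}}$ are injective, hence injective as $\mathfrak{A}[1]\otimes\K$ and $\mathfrak{A}[2]\otimes\K$ are simple, so $\alpha$ is full $X_{2}$-equivariant by Remark~\ref{r:X2hom}.

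For the $K$-theoretic bookkeeping I would note that once $\alpha\circ\phi$ agrees with $\id_{\mathfrak{A}\otimes\K}$ to within the tolerance furnished by semiprojectivity on the relevant finite set, $\alpha\circ\phi$ is homotopic to $\id$ and hence $\kk(X_{2};\alpha\circ\phi)=\kk(X_{2};\id)$; combining this with the Kasparov product $\kk(X_{2};\alpha\circ\phi)=\kk(X_{2};\phi)\times\kk(X_{2};\alpha)$ and the hypothesis $\kk(X_{2};\phi)=\kk(X_{2};\id)$ yields $\kk(X_{2};\alpha)=\kk(X_{2};\id)$. A diagonal argument over $m$, choosing $n=n(m)$ so that $\alpha_{m,n(m)}'\circ\gamma_{m}$ is within $1/m$ of $\id$ on an exhausting sequence of finite sets, then produces the required sequence $\alpha_{m}:=\alpha_{m,n(m)}'\circ\mathrm{Ad}(u_{m})$ of full $X_{2}$-equivariant endomorphisms with $\kk(X_{2};\alpha_{m})=\kk(X_{2};\id)$ and $\alpha_{m}\circ\phi\to\id$. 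The principal difficulty is the reduction of the second paragraph: every later step is either a direct appeal to Lemma~\ref{l:rep2} or a routine fullness and $KK$-class check, whereas extracting from the mere approximate unitary equivalence $\phi_{\{1\}}\approx_{u}\id$ an honest homomorphism whose quotient part is exactly the identity is where the real work lies.
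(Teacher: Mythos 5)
Your proposal follows essentially the same route as the paper's proof: restrict to the quotient and use the class-$\mathcal{C}$ property to get $\phi_{\{1\}}\approx_{u}\id$, lift the unitaries using stability of $\mathfrak{A}[1]\otimes\K$, use semiprojectivity to perturb to a homomorphism whose quotient part is exactly the identity (the paper does this via Proposition~2.2 of \cite{mdge_onepara}, which is precisely the tool for the step you flag as the main obstacle), and then feed the result into Lemma~\ref{l:rep2}, with the same fullness and $\kk(X_{2};-)$ bookkeeping via Remark~\ref{r:X2hom} and homotopy invariance. The argument is correct and matches the paper's in structure and in all essential details.
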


\begin{proof}
Set $\mathfrak{B} = \mathfrak{A} \otimes \K$.  Note that $\mathfrak{B}$ is a tight $C^{*}$-algebra over $X_{2}$ with $\mathfrak{B} [ 2 ] \cong \K$.  Throughout the proof, $\ftn{\pi}{ \mathfrak{B} }{ \mathfrak{B} [1 ]}$ will denote the canonical projection.  Note that $\kk ( \phi_{ \{1\} } ) = \kk ( \id_{ \mathfrak{B} [1] } )$ since $\kk ( X_{2} ; \phi ) = \kk ( X_{2} ; \id_{ \mathfrak{B} } )$.   Since $\mathfrak{A}[1] \in \mathcal{C}$, there exists a sequence of unitaries $\{ z_{k} \}_{ k =1}^{ \infty }$ in $\multialg{ \mathfrak{B}[1] }$ such that
\begin{align*}
\lim_{ k \to \infty } \| z_{k}  \phi_{\{1\}} ( \pi(b) )  z_{k}^{*} - \pi (b) \| = 0
\end{align*}
for all $b \in \mathfrak{B}$.  Using the fact that $\phi$ is an $X_{2}$-equivariant homomorphism, we have that $\pi \circ \phi  = \phi_{ \{1\} } \circ \pi$, and hence
\begin{align*}
\lim_{ k \to \infty } \| z_{k} ( \pi \circ \phi(b) )  z_{k}^{*} - \pi (b) \| = 0
\end{align*}
for all $b \in \mathfrak{B}$.

Let $\ftn{ \overline{ \pi } }{ \multialg{ \mathfrak{B} } }{ \multialg{ \mathfrak{B} [1 ] } }$ be the surjective homomorphism induced by $\pi$.  Since $\mathfrak{B}$ is stable, by Corollary~2.3 of \cite{RordamStable}, we have that $\mathfrak{B} [1]$ is stable.  Thus, the unitary group of $\multialg{ \mathfrak{B} [ 1 ] }$ is path-connected, which implies that every unitary in $\multialg{ \mathfrak{B}[1] }$ lifts to a unitary in $\multialg{ \mathfrak{B} }$.  Hence, there exists a sequence of unitaries $\{w_{k} \}_{ k = 1}^{ \infty }$ in $\multialg{ \mathfrak{B} }$ such that $\overline{ \pi } ( w_{k} ) = z_{k}$.  Since $\mathfrak{B}$ is semiprojective, by Proposition~2.2 of \cite{mdge_onepara} (see \cite{loringlift}), there exists a sequence of homomorphisms $\{ \ftn{ \beta_{ \ell } }{ \mathfrak{B} }{ \mathfrak{B} } \}_{ \ell = 1}^{ \infty }$ and a strictly increasing sequence $\{ k(\ell) \}_{ \ell =1}^{ \infty }$ of positive integers such that $\pi \circ \beta_{ \ell } = \pi$ and 
\begin{align*}
\lim_{ \ell \to \infty } \| \mathrm{Ad} ( w_{ k(\ell) } ) \circ \phi ( b) - \beta_{ \ell } ( b ) \| = 0
\end{align*} 
for all $b \in \mathfrak{B}$.   

By Remark~\ref{r:X2hom}, there exists $N_{1} \in \N$ such that $\beta_{\ell}$ is a full $X_{2}$-equivariant homomorphism for all $\ell \geq N_{1}$.   By Proposition~2.3 of \cite{mdge_onepara}, we may choose $N_{2} \geq N_{1}$ such that for all $\ell \geq N_{2}$, we have that $\beta_{\ell}$ and $\mathrm{Ad} ( w_{ k(\ell) } ) \circ \phi$ is homotopic.  It follows from Theorem~5.5 of \cite{mdrm:ethy} that $\kk ( X_{2} ; \beta_{\ell} ) = \kk ( X_{2} ; \mathrm{Ad} ( w_{ k(\ell) } ) \circ \phi ) = \kk ( X_{2} ; \phi ) = \kk ( X_{2} ; \id_{ \mathfrak{B} } )$.

Let $\ell \geq N_{2}$.  Note that $( \beta_{\ell} )_{\{1\}} = \id_{ \mathfrak{B}[1] }$ since $\pi \circ \beta_{ \ell } = \pi$.  Since $\mathfrak{A}$ is semiprojective, by Corollary~3.6 of \cite{md:contfields} (also see Chapter~19 of \cite{loringlift}), $\mathfrak{A}$ is weakly semiprojective.  Hence, by Lemma~\ref{l:rep2}, there exists a sequence of homomorphisms $\{ \ftn{ \alpha_{ m, \ell } }{ \mathfrak{B} }{ \mathfrak{B} } \}_{ m = 1}^{\infty }$ such that 
\begin{align*}
\lim_{ m \to \infty } \| \alpha_{m,\ell } \circ \beta_{\ell} (x) -x \| = 0
\end{align*}
for all $x \in \mathfrak{B}$.  Since $\beta_{\ell}$ and $\id_{\mathfrak{B}}$ are full $X_{2}$-equivariant homomorphisms, by Remark~\ref{r:X2hom}, there exists $N_{3}$ such that, for all $m \geq N_{3}$, we have that $\alpha_{m,\ell}$ is a full $X_{2}$-equivariant homomorphism.  Moreover, by Proposition~2.3 of \cite{mdge_onepara}, we can choose $N_{3} \geq N_{2}$ such that $\alpha_{m,\ell} \circ \beta_{\ell}$ and $\id_{ \mathfrak{B} }$ are homotopic.  It follows from Theorem~5.5 of \cite{mdrm:ethy} that $\kk( X_{2} ; \alpha_{  m , \ell} \circ \beta_{ \ell } ) = \kk ( X_{2} ; \id_{ \mathfrak{B} } )$ for all $m \geq N_{3}$.  Consequently, $\kk ( X_{2} ; \alpha_{ m , \ell} ) =  \kk ( X_{2} ; \id_{ \mathfrak{B} } )$ for all $m \geq N_{3}$ since $\kk( X_{2} ; \beta_{ \ell } ) = \kk ( X_{2} ; \id_{ \mathfrak{B} } )$.

Let $\mathcal{F}$ be a finite subset of $\mathfrak{B}$ and $\epsilon > 0$.  Then there exists $\ell \geq N_{2}$ such that 
\begin{align*}
\| \mathrm{Ad} ( w_{ k(\ell) } ) \circ \phi ( b ) - \beta_{ \ell } ( b ) \| < \frac{ \epsilon }{ 2 }
\end{align*}  
for all $b \in \mathcal{F}$.  Moreover, there exists $m \geq N_{3}$ such that 
\begin{align*}
\| \alpha_{  m , \ell} \circ  \beta_{\ell} (b) -  b \| < \frac{ \epsilon }{ 2 }
\end{align*}
for all $b \in \mathcal{F}$.  Set $\alpha_{1} = \mathrm{Ad} ( w_{k( \ell ) } ) \vert_{ \mathfrak{B} }$ and $\alpha = \alpha_{ m , \ell } \circ \alpha_{1}$.  Since $w_{k(\ell)}$ is a unitary in $\multialg{ \mathfrak{B} }$, we have that $\alpha_{1}$ is an automorphism of $\mathfrak{B}$ and $\kk ( X_{2} ; \alpha_{1} ) = \kk ( X_{2} ; \id_{ \mathfrak{B} } )$.  Therefore, $\alpha$ is a full $X_{2}$-equivariant homomorphism.  Since $\ell \geq N_{2}$ and $m \geq N_{3}$, we have that $\kk ( X_{2} ; \alpha_{ m , \ell } ) = \kk ( X_{2} ; \id_{ \mathfrak{B} } )$.  Therefore, $\kk ( X_{2} ; \alpha ) = \kk ( X_{2} ; \id_{ \mathfrak{B} } )$.  Let $b \in \mathcal{F}$.  Then 
\begin{align*}
\| \alpha \circ \phi ( b ) - b \| &= \| \alpha_{ m, \ell } \circ \mathrm{Ad}( w_{k(\ell) } )  \circ \phi ( b ) - b \| \\
&\leq  \| \alpha_{ m , \ell } \circ \mathrm{Ad} ( w_{k(\ell) } ) \circ \phi ( b ) - \alpha_{ m , \ell } \circ \beta_{ \ell } ( b ) \| + \| \alpha_{ m , \ell }  \circ  \beta_{\ell} (b) -  b \| \\
						&< \frac{ \epsilon }{ 2 } + \frac{ \epsilon }{ 2 }  = \epsilon.
\end{align*}

We have just shown that for every $\epsilon > 0$ and for every finite subset $\mathcal{F}$ of $\mathfrak{B}$, there exists a full $X_{2}$-equivariant homomorphism $\ftn{ \alpha }{ \mathfrak{B} }{ \mathfrak{B} }$ such that $\kk ( X_{2} ; \alpha) = \kk ( X_{2} ; \id_{ \mathfrak{B} } )$ and 
\begin{align*}
\| \alpha \circ \phi ( b ) - b \| < \epsilon 
\end{align*}
for all $b \in \mathfrak{B}$.  Since $\mathfrak{B}$ is a separable $C^{*}$-algebra, there exists a sequence of full $X_{2}$-equivariant homomorphisms $\{ \ftn{ \alpha_{n} }{ \mathfrak{B} }{ \mathfrak{B} } \}_{ n =1}^{ \infty }$ such that $\kk ( X_{2} ; \alpha_{n} ) = \kk (  X_{2} ; \id_{ \mathfrak{B} } )$ and 
\begin{align*}
\lim_{ n \to \infty } \| \alpha_{n} \circ \phi ( b ) - b \| = 0
\end{align*}
for all $b \in \mathfrak{B}$.
\end{proof}

\begin{theor}\label{t:classmixed2}
Let $\mathcal{C}$ be a class of $C^{*}$-algebras satisfying the property in Definition~\ref{d:approxunitkk}, and let $\mathfrak{A}_{1}$ and $\mathfrak{A}_{2}$ be unital, separable, nuclear, tight $C^{*}$-algebras over $X_{2}$ such that $\mathfrak{A}_{i} [ 2 ] \cong \K$ and $\mathfrak{A}_{i} [ 1 ] \in \mathcal{C}$.  Suppose $\mathfrak{A}_{i} \otimes \K$ is semiprojective and $\mathfrak{A}_{i}$ has the stable weak cancellation property.  If there exist full $X_{2}$-equivariant homomorphisms, $\ftn{ \phi }{ \mathfrak{A}_{1} \otimes \K }{ \mathfrak{A}_{2} \otimes \K }$ and $\ftn{ \psi }{ \mathfrak{A}_{2} \otimes \K }{ \mathfrak{A}_{1} \otimes \K }$, such that $\kk ( X_{2} ; \phi \circ \psi) = \kk ( X_{2} ; \id_{ \mathfrak{A}_{2} \otimes \K } )$ and $\kk ( X_{2} ; \psi \circ \phi) = \kk ( X_{2} ; \id_{ \mathfrak{A}_{1} \otimes \K } )$, then for any finite subset $\mathcal{F}$ and $\epsilon > 0$, there exists an isomorphism $\ftn{\gamma }{ \mathfrak{A}_{1} \otimes \K }{ \mathfrak{A}_{2} \otimes \K }$ such that $\kk( X_{2} ; \gamma ) = \kk ( \phi )$ and
\begin{align*}
\| \gamma (x) - \phi (x) \| < \epsilon
\end{align*}
for all $x \in \mathcal{F}$.
\end{theor}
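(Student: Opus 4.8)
The plan is to run Elliott's two-sided approximate intertwining, using Theorem~\ref{t:uniq2} as the engine that manufactures the connecting homomorphisms with prescribed $\kk(X_{2};-)$-classes. The two composites $\psi\circ\phi$ and $\phi\circ\psi$ are full $X_{2}$-equivariant endomorphisms (a composition of full maps is full) whose $\kk(X_{2};-)$-classes are trivial by hypothesis, so Theorem~\ref{t:uniq2} applies to each, and more generally to $\psi\circ\alpha$ and $\phi\circ\beta$ whenever $\alpha,\beta$ are full $X_{2}$-equivariant homomorphisms with $\kk(X_{2};\alpha)=\kk(X_{2};\phi)$ and $\kk(X_{2};\beta)=\kk(X_{2};\psi)$: indeed $\kk(X_{2};\psi\circ\alpha)=\kk(X_{2};\alpha)\times\kk(X_{2};\psi)=\kk(X_{2};\psi\circ\phi)=\kk(X_{2};\id)$, and symmetrically $\kk(X_{2};\phi\circ\beta)=\kk(X_{2};\id)$.

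First I would fix increasing finite sets $\mathcal{F}_{1}\subseteq\mathcal{F}_{2}\subseteq\cdots$ with dense union in $\mathfrak{A}_{1}\otimes\K$ and $\mathcal{G}_{1}\subseteq\mathcal{G}_{2}\subseteq\cdots$ with dense union in $\mathfrak{A}_{2}\otimes\K$, with $\mathcal{F}\subseteq\mathcal{F}_{1}$, together with tolerances $\epsilon_{k}>0$ satisfying $\sum_{k}2\epsilon_{k}<\epsilon$. Set $\alpha_{1}=\phi$. Inductively, given a full $X_{2}$-equivariant $\alpha_{k}\colon\mathfrak{A}_{1}\otimes\K\to\mathfrak{A}_{2}\otimes\K$ with $\kk(X_{2};\alpha_{k})=\kk(X_{2};\phi)$, I apply Theorem~\ref{t:uniq2} to $\psi\circ\alpha_{k}$ to get a full $X_{2}$-equivariant endomorphism $s$ of $\mathfrak{A}_{1}\otimes\K$ with $\kk(X_{2};s)=\kk(X_{2};\id)$ and $\|s\circ\psi\circ\alpha_{k}(x)-x\|<\epsilon_{k}$ on $\mathcal{F}_{k}$, and put $\beta_{k}=s\circ\psi$, so that $\kk(X_{2};\beta_{k})=\kk(X_{2};\psi)$ and $\beta_{k}\circ\alpha_{k}\approx\id$ on $\mathcal{F}_{k}$. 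Then I apply Theorem~\ref{t:uniq2} to $\phi\circ\beta_{k}$ to get a full $X_{2}$-equivariant endomorphism $t$ of $\mathfrak{A}_{2}\otimes\K$ with trivial $\kk(X_{2};-)$-class and $\|t\circ\phi\circ\beta_{k}(y)-y\|<\epsilon_{k}$ on $\mathcal{G}_{k}$, and put $\alpha_{k+1}=t\circ\phi$, so that $\kk(X_{2};\alpha_{k+1})=\kk(X_{2};\phi)$ and $\alpha_{k+1}\circ\beta_{k}\approx\id$ on $\mathcal{G}_{k}$. By the standard bookkeeping I enlarge the finite sets so that $\alpha_{k}(\mathcal{F}_{k})\subseteq\mathcal{G}_{k}$ and $\beta_{k}(\mathcal{G}_{k})\subseteq\mathcal{F}_{k+1}$ and the unions stay dense.

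A routine telescoping estimate then gives $\|\alpha_{k+1}(x)-\alpha_{k}(x)\|<2\epsilon_{k}$ for $x\in\mathcal{F}_{k}$, and likewise for the $\beta_{k}$, so the point-norm limits $\gamma=\lim_{k}\alpha_{k}$ and $\delta=\lim_{k}\beta_{k}$ exist and are $X_{2}$-equivariant homomorphisms; the intertwining relations pass to the limit to give $\delta\circ\gamma=\id_{\mathfrak{A}_{1}\otimes\K}$ and $\gamma\circ\delta=\id_{\mathfrak{A}_{2}\otimes\K}$, so $\gamma$ is an $X_{2}$-equivariant isomorphism. Since $\mathcal{F}\subseteq\mathcal{F}_{1}$ and $\sum_{k}2\epsilon_{k}<\epsilon$, summing the telescoping bounds yields $\|\gamma(x)-\phi(x)\|=\|\gamma(x)-\alpha_{1}(x)\|<\epsilon$ for all $x\in\mathcal{F}$.

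It remains to identify $\kk(X_{2};\gamma)$, and this is the step I expect to be the main obstacle, since $\kk(X_{2};-)$ is not continuous for point-norm convergence and so the equality does not follow formally from $\alpha_{k}\to\gamma$. Here I would exploit that $\mathfrak{A}_{1}\otimes\K$ is semiprojective: by the stability and homotopy results already used in the proof of Theorem~\ref{t:uniq2} (Theorem~2.4 of \cite{sepBDF} together with Proposition~2.3 of \cite{mdge_onepara}), there are a finite set and a tolerance such that any two $X_{2}$-equivariant homomorphisms agreeing to within that tolerance on that set are $X_{2}$-equivariantly homotopic. Because $\alpha_{k}\to\gamma$ in point-norm, for $k$ large $\gamma$ and $\alpha_{k}$ meet this criterion and are therefore homotopic, and by Theorem~5.5 of \cite{mdrm:ethy} homotopic $X_{2}$-equivariant homomorphisms have equal $\kk(X_{2};-)$-class. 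Since $\kk(X_{2};\alpha_{k})=\kk(X_{2};\phi)$ for every $k$ by construction, this gives $\kk(X_{2};\gamma)=\kk(X_{2};\phi)$, which is the desired conclusion.
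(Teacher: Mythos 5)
Your proposal is correct and follows essentially the same route as the paper: a two-sided Elliott approximate intertwining in which Theorem~\ref{t:uniq2} manufactures the correcting endomorphisms with trivial $\kk(X_{2};-)$-class, followed by the identification of $\kk(X_{2};\gamma)$ via semiprojectivity of $\mathfrak{A}_{1}\otimes\K$, Proposition~2.3 of \cite{mdge_onepara} to get a homotopy between $\gamma$ and a far-out term of the approximating sequence, and Theorem~5.5 of \cite{mdrm:ethy} for homotopy invariance. The paper's proof is the same argument with explicit $2^{-k}$ bookkeeping, so there is nothing substantive to add.
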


\begin{proof}
Let $\{ \overline{\mathcal{F}}_{n} \}_{ n = 1}^{\infty}$ be a sequence of finite subsets of $\mathfrak{A}_{1} \otimes \K $ such that $\overline{\mathcal{F}}_{n} \subseteq \overline{\mathcal{F}}_{n+1}$ and $\bigcup_{ n = 1}^{\infty } \overline{\mathcal{F}}_{n}$ is dense in $\mathfrak{A}_{1} \otimes \K$ and let $\{ \overline{\mathcal{G}}_{n} \}_{ n = 1}^{\infty}$ be a sequence of finite subsets of $\mathfrak{A}_{2} \otimes \K $ such that $\overline{\mathcal{G}}_{n} \subseteq \overline{\mathcal{G}}_{n+1}$ and $\bigcup_{ n = 1}^{\infty } \overline{\mathcal{G}}_{n}$ is dense in $\mathfrak{A}_{2} \otimes \K$.

Let $\epsilon > 0$ and $\mathcal{F}$ be a finite subset of $\mathfrak{A}_{1}$.  Set $\mathcal{F}_{1} = \mathcal{F} \cup \overline{\mathcal{F}}_{1}$ and choose $m_{1} \in \N$ such that $\sum_{ k = m_{1} }^{\infty} \frac{1}{ 2^{k} } < \epsilon$.  By Theorem~\ref{t:uniq2}, there exists a full $X_{2}$-equivariant homomorphism $\ftn{ \alpha_{1} }{ \mathfrak{A}_{1} \otimes \K }{ \mathfrak{A}_{1} \otimes \K }$ such that $\kk ( X_{2} ; \alpha_{1}  ) = \kk ( X_{2} ; \id_{ \mathfrak{A}_{1} \otimes \K } )$ and 
\begin{align*}
\| \alpha_{1} \circ \psi \circ \phi ( a ) - a \| < \frac{1}{2^{m_{1} + 1}}
\end{align*}
for all $a \in \mathcal{F}_{1}$.  Set $\phi_{1} = \phi$ and $\psi_{1} = \alpha_{1} \circ \psi$.  Then $\kk ( X_{2} ; \psi_{1} ) = \kk ( X_{2} ;  \psi )$ and $\| \psi_{1} \circ \phi_{1}(a) - a \| < \frac{1}{2^{m_{1} +1}}$ for all $a \in \mathcal{F}_{1}$.   

Set $\mathcal{G}_{1} = \overline{G}_{1} \cup \phi( \mathcal{F}_{1} )$.  Note that $\kk ( X_{2} ; \phi \circ \psi_{1} ) =\kk ( X_{2} ; \phi \circ \psi ) = \kk ( X_{2} ; \id_{ \mathfrak{A}_{2} \otimes \K } )$.  Hence, by Theorem~\ref{t:uniq2}, there exists a full $X_{2}$-equivariant homomorphism $\ftn{ \beta_{1} }{ \mathfrak{A}_{2} \otimes \K }{ \mathfrak{A}_{2} \otimes \K }$ such that $\kk ( X_{2} ; \beta_{1} ) = \kk ( X_{2} ; \id_{ \mathfrak{A}_{2} \otimes \K } )$ and 
\begin{align*}
\| \beta_{1} \circ \phi \circ \psi_{1}( x ) - x \| < \frac{1}{2^{m_{1} +1}}
\end{align*}
for all $x \in \mathcal{G}_{1}$.  Set $\phi_{2} = \beta_{1} \circ \phi$.  Then $\kk ( X_{2} ; \phi_{2} ) = \kk ( X_{2} ; \phi )$ and 
\begin{align*}
\| \phi_{2} \circ \psi_{1} ( x ) - x \| < \frac{1}{2^{m_{1}+1}}
\end{align*}
for all $x \in \mathcal{G}_{1}$.  Note that for all $x \in \mathcal{F}_{1}$, then 
\begin{align*}  
\| \phi(x) - \phi_{2} (x) \| &\leq \| \phi_{1} (x)  - \phi_{2} \circ \psi_{1} ( \phi_{1} (x) ) \| + \| \phi_{2} \circ \psi_{1} ( \phi_{1} (x) ) - \phi_{2} ( x ) \| \\
			& < \frac{1}{2^{m_{1}+1}} + \| \psi_{1} \circ\phi_{1} (x)  - x \| < \frac{1}{2^{m_{1}}}.
\end{align*}

Set $\mathcal{F}_{2} = \overline{ \mathcal{F} }_{2} \cup \phi_{2} ( \mathcal{G}_{1} )$.  Note that $\kk ( X_{2} ; \psi \circ \phi_{2} ) =\kk ( X_{2} ; \psi \circ \phi  ) = \kk ( X_{2} ; \id_{ \mathfrak{A}_{1} \otimes \K } )$.  Hence, by Theorem~\ref{t:uniq2}, there exists a full $X_{2}$-equivariant homomorphism $\ftn{ \alpha_{2} }{ \mathfrak{A}_{1} \otimes \K }{ \mathfrak{A}_{1} \otimes \K }$ such that $\kk ( X_{2} ; \alpha_{2} ) = \kk ( X_{2} ; \id_{ \mathfrak{A}_{1} \otimes \K } )$ and 
\begin{align*}
\| \alpha_{2} \circ \psi \circ \phi_{2}( a ) - a \| < \frac{1}{2^{m_{1}+2}}
\end{align*}
for all $a \in \mathcal{F}_{2}$.  Set $\psi_{2} = \alpha_{2} \circ \psi$.  Then $\kk ( X_{2} ; \psi_{2}  ) = \kk ( X_{2} ; \psi)$ and 
\begin{align*}
\| \psi_{2} \circ \phi_{2} ( a ) - a \| < \frac{1}{2^{m_{1} +2 } }
\end{align*}
for all $x \in \mathcal{F}_{2}$.

Set $\mathcal{G}_{2} = \overline{G}_{2} \cup \phi_{2}( \mathcal{F}_{2} )$.  Note that $\kk ( X_{2} ; \phi \circ \psi_{2} ) =\kk ( X_{2} ; \phi \circ \psi ) = \kk ( X_{2} ; \id_{ \mathfrak{A}_{2} \otimes \K } )$.  Hence, by Theorem~\ref{t:uniq2}, there exists a full $X_{2}$-equivariant homomorphism $\ftn{ \beta_{2} }{ \mathfrak{A}_{2} \otimes \K }{ \mathfrak{A}_{2} \otimes \K }$ such that $\kk ( X_{2} ; \beta_{2} ) = \kk ( X_{2} ; \id_{ \mathfrak{A}_{2} \otimes \K } )$ and 
\begin{align*}
\| \beta_{2} \circ \phi \circ \psi_{2}( x ) - x \| < \frac{1}{2^{m_{1} +2}}
\end{align*}
for all $x \in \mathcal{G}_{2}$.  Set $\phi_{3} = \beta_{2} \circ \phi$.  Then $\kk ( X_{2} ; \phi_{3} ) = \kk ( X_{2} ; \phi )$ and 
\begin{align*}
\| \phi_{3} \circ \psi_{2} ( x ) - x \| < \frac{1}{2^{m_{1}+2}}
\end{align*}
for all $x \in \mathcal{G}_{2}$.  Note that for all $x \in \mathcal{F}_{2}$, we have that
\begin{align*}  
\| \phi_{2}(x) - \phi_{3} (x) \| &\leq \| \phi_{2} (x)  - \phi_{3} \circ \psi_{2} ( \phi_{2} (x) ) \| + \| \phi_{3} \circ \psi_{2} ( \phi_{2} (x) ) - \phi_{3} ( x ) \| \\
			& < \frac{1}{2^{m_{1}+2}} + \| \psi_{2} ( \phi_{2} (x) ) - x \| < \frac{1}{2^{m_{1}+1}}.
\end{align*}

Continuing this process, we have constructed a sequence $\{ \mathcal{F}_{n} \}_{ n = 1}^{\infty}$ of finite subsets of $\mathfrak{A}_{1} \otimes \K$, a sequence $\{ \mathcal{G}_{n} \}_{ n = 1}^{\infty}$ of finite subsets of $\mathfrak{A}_{2} \otimes \K$, a sequence of full $X_{2}$-equivariant homomorphisms $\{ \ftn{ \phi_{n} }{ \mathfrak{A}_{1} \otimes \K }{ \mathfrak{A}_{2} \otimes \K } \}_{ n = 1}^{\infty }$, and a sequence of full $X_{2}$-equivariant homomorphisms $\{ \ftn{ \psi_{n} }{ \mathfrak{A}_{2} \otimes \K }{ \mathfrak{A}_{1} \otimes \K } \}_{ n = 1}^{\infty }$ such that 
\begin{itemize}
\item[(1)] $\kk ( X_{2} ;  \phi_{n}  ) = \kk ( X_{2} ; \phi )$ for all $n \in \N$ and $\phi_{1} = \phi$;

\item[(2)] $\kk ( X_{2} ;  \psi_{n}  ) = \kk ( X_{2} ; \psi )$ for all $n \in \N$; 

\item[(3)] $\mathcal{F}_{n} \subseteq \mathcal{F}_{n+1}$ and $\overline{ \mathcal{F} }_{n} \subseteq \mathcal{F}_{n}$;

\item[(4)] $\mathcal{G}_{n} \subseteq \mathcal{G}_{n+1}$ and $\overline{ \mathcal{G} }_{n} \subseteq \mathcal{G}_{n}$;

\item[(5)] for each $x \in \mathcal{F}_{n}$ and for each $x \in \mathcal{G}_{n}$
\begin{align*}
 \| \psi_{n} \circ \phi_{n} (x) - x \| < \frac{1}{2^{m_{1}+n}}  \quad \text{and} \quad  \| \phi_{n+1} \circ \psi_{n} ( x ) - x \|  <  \frac{1}{2^{m_{1}+n}}
\end{align*}

\item[(6)] for each $x \in \mathcal{F}_{n}$, 
\begin{align*}
\| \phi_{n} (x) - \phi_{n+1} (x) \| < \frac{ 1 }{ 2^{m_{1} + n-1 } } 
\end{align*}
\end{itemize}
Since $\bigcup_{ n = 1}^{\infty} \overline{\mathcal{F} }_{n}$ is dense in $\mathfrak{A}_{1} \otimes \K$ and $\overline{ \mathcal{F} }_{n} \subseteq \mathcal{F}_{n}$, we have that $\bigcup_{ n = 1}^{\infty} \mathcal{F}_{n}$ is dense in $\mathfrak{A}_{1} \otimes \K$.  Similarly, $\bigcup_{ n = 1}^{\infty} \mathcal{G}_{n}$ is dense in $\mathfrak{A}_{2} \otimes \K$.  Therefore, there exists an isomorphism $\ftn{ \gamma }{ \mathfrak{A}_{1} \otimes \K }{ \mathfrak{A}_{2} \otimes \K }$ such that 
\begin{align*}
\| \gamma ( a ) - \phi_{n} ( a )\| < \sum_{ k = m_{1} + n - 1 }^{ \infty } \frac{1}{2^{k} }
\end{align*}
for all $a \in \mathcal{F}_{n}$.  Since $\mathcal{F} \subseteq \mathcal{F}_{1}$, we have that 
\begin{align*}
\| \phi (x) - \gamma (x) \| = \| \phi_{1} (x) - \gamma (x) \| < \sum_{ k = m_{1}}^{\infty} \frac{1}{2^{k} } < \epsilon.
\end{align*}
Since 
\begin{align*}
\lim_{ n \to \infty }\sum_{ k = m_{1} + n - 1 }^{ \infty } \frac{1}{2^{k} } = 0,
\end{align*}
we have that
\begin{align*}
\lim_{ n \to \infty } \| \gamma ( a) - \phi_{n} (a) \| = 0
\end{align*}  
for all $a \in \mathfrak{A}_{1} \otimes \K$.  Since $\mathfrak{A}_{1} \otimes \K$ is semiprojective, by Proposition~2.3 of \cite{mdge_onepara}, there exists $N \in \N$ such that $\gamma$ and $\phi_{N}$ are homotopic.  Hence, by Theorem~5.5 of \cite{mdrm:ethy}, $\kk ( X_{2} ; \gamma ) = \kk ( X_{2} ; \phi_{N} ) = x$.
\end{proof}

\subsection{Unital Classification}

We know combine the above results with the Meta-theorem of Section~\ref{s:meta} (see Theorem~\ref{t:metathm}) to get a strong classification for a class of unital $C^{*}$-algebras which includes all unital graph $C^{*}$-algebras with exactly one non-trivial ideal.

\begin{corol}\label{c:unitclassmixed1}
Let $\mathfrak{A}_{1}$ and $\mathfrak{A}_{2}$ be unital, tight $C^{*}$-algebras over $X_{n}$ such that $\mathfrak{A}_{i}$ has real rank zero, $\mathfrak{A}_{i} [n]$ is a Kirchberg algebra in $\mathcal{N}$, and $\mathfrak{A}_{i} [1,n-1]$ is an AF-algebra. Let $x \in \kk ( X_{2} ; \mathfrak{A}_{1} , \mathfrak{A}_{2} )$ be an invertible such that $K_{X_{n}}(x)_{ Y }$ is an order isomorphism for each $Y \in \mathbb{LC} ( X_{n} )$ and $K_{X_{n}}(x)_{X_{n}} ( [ 1_{ \mathfrak{A}_{1} } ] ) = [ 1_{ \mathfrak{A}_{2} } ]$ in $K_{0} ( \mathfrak{A}_{2} )$.  Then there exists an isomorphism $\ftn{ \phi }{ \mathfrak{A} }{ \mathfrak{B} }$ such that $K_{X_{n}} ( \phi ) = K_{X_{n}}(x)$. 
\end{corol}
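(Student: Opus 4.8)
The plan is to deduce this from the meta-theorem, Theorem~\ref{t:metathm}. Let $\mathcal{C}$ be the subcategory of $\catc(X_{n})$ whose objects are the separable, nuclear, tight $C^{*}$-algebras over $X_{n}$ of real rank zero for which the minimal ideal $\mathfrak{A}[n]$ is a Kirchberg algebra in $\mathcal{N}$ and the quotient $\mathfrak{A}[1,n-1]$ is an AF-algebra (including both the unital objects of interest and their stabilizations), with $X_{n}$-equivariant $*$-homomorphisms as morphisms. This class is closed under $\otimes\,\mathsf{M}_{2}(\C)$ and $\otimes\,\K$ and contains the canonical embeddings $\kappa_{1}$ and $\kappa$. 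I would take $\mathsf{F} = K_{X_{n}}^{+}(-)$, let $\mathcal{D}$ be the target category of ordered ideal related $K$-theory, and let $\mathsf{G}$ be evaluation of the $K_{0}$-component at $Y = X_{n}$, so that $\mathsf{G}\circ\mathsf{F} = K_{0}$. The hypothesis then supplies an isomorphism $\alpha = K_{X_{n}}(x)\colon \mathsf{F}(\mathfrak{A}_{1}) \to \mathsf{F}(\mathfrak{A}_{2})$ with $\mathsf{G}(\alpha)([1_{\mathfrak{A}_{1}}]) = [1_{\mathfrak{A}_{2}}]$, and the conclusion of Theorem~\ref{t:metathm} produces exactly an isomorphism $\phi\colon \mathfrak{A}_{1} \to \mathfrak{A}_{2}$ with $K_{X_{n}}(\phi) = \alpha = K_{X_{n}}(x)$.

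Several hypotheses of Theorem~\ref{t:metathm} are routine. Condition (1) holds because stabilization and the embedding into $\mathsf{M}_{2}$ induce order isomorphisms on every group $K_{*}(\mathfrak{A}(Y))$; condition (3) holds by the choice of $\mathsf{G}$; and $\mathsf{F}(\mathrm{Ad}(u)\vert_{\mathfrak{A}}) = \id_{\mathsf{F}(\mathfrak{A})}$ for every unitary $u \in \multialg{\mathfrak{A}}$ by Lemma~\ref{l:unitaryidealkthy}. Since each subquotient of $\mathfrak{A}_{i}$ is either an AF-algebra or a Kirchberg algebra, all have stable weak cancellation, and an iterated application of Lemma~3.15 of \cite{err:fullext} shows that $\mathfrak{A}_{1}$ and $\mathfrak{A}_{2}$ have stable weak cancellation. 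I would also record that $\mathfrak{A}_{i}[n]$ is automatically essential: every non-empty open subset of $X_{n}$ contains the point $n$, so $\mathfrak{A}_{i}[n]$ meets every non-zero ideal.

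The substantive point is condition (2): for stable objects $\mathfrak{A},\mathfrak{B}$ of $\mathcal{C}$, every isomorphism $K_{X_{n}}^{+}(\mathfrak{A}) \to K_{X_{n}}^{+}(\mathfrak{B})$ must be induced by a $*$-isomorphism. First I would invoke Corollary~\ref{c:uctX} to lift such an isomorphism to an invertible element of $\kk(X_{n}; \mathfrak{A}, \mathfrak{B})$ whose induced map $\Gamma(-)_{Y}$ is a positive isomorphism for every $Y \in \mathbb{LC}(X_{n})$, and then realize it by an $X_{n}$-equivariant $*$-isomorphism via the $X_{n}$-analogue of Theorem~\ref{t:classmixed1}. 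The existence and uniqueness inputs (Theorems~\ref{t:exist} and~\ref{t:uniq1}) transfer from $X_{2}$ to $X_{n}$ with little change: the single simple ideal $\mathfrak{A}[n]$ is matched by the Kirchberg--Phillips classification, while the AF quotient $\mathfrak{A}[1,n-1]$ is matched by Elliott's classification \cite{af}.

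The main obstacle is precisely this passage from $X_{2}$ to $X_{n}$, namely ensuring that the isomorphism obtained on the AF quotient is $X_{n-1}$-equivariant rather than merely a $K_{0}$-isomorphism. Here one uses that the ideals of an AF-algebra correspond exactly to the order ideals of its ordered $K_{0}$-group, so that an order isomorphism of $K_{0}$ automatically preserves the ideal lattice; Elliott's classification therefore delivers an isomorphism of the quotients respecting the full $X_{n-1}$-structure. Feeding this back through the existence and uniqueness arguments yields an $X_{n}$-equivariant $*$-isomorphism inducing the prescribed map on $K_{X_{n}}^{+}$, which verifies condition (2) and completes the application of Theorem~\ref{t:metathm}.
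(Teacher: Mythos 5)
Your proposal is correct and follows essentially the same route as the paper: verify the hypotheses of Theorem~\ref{t:metathm} (separability, nuclearity, stable weak cancellation via Lemma~3.15 of \cite{err:fullext}, and invariance under $\mathrm{Ad}(u)$ via Lemma~\ref{l:unitaryidealkthy}), and obtain the stable classification input from Corollary~\ref{c:uctX} together with Theorem~\ref{t:classmixed1}. The only difference is one of detail: the paper dismisses the passage from the $X_{2}$-structure to the full $X_{n}$-structure as ``a computation,'' whereas you spell out the key mechanism (ideals of an AF quotient are order ideals of $K_{0}$, so the isomorphism is automatically $X_{n}$-equivariant and induces the prescribed map on all subquotients), which is exactly the computation the paper has in mind.
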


\begin{proof}
Since $\mathfrak{A}_{i}[1]$ and $\mathfrak{A}_{i}[2]$ are separable and nuclear, we have that $\mathfrak{A}_{i}$ is separable and nuclear.  Since $\mathfrak{A}_{i} [ 1, n-1 ]$ is an AF-algebra and $\mathfrak{A}_{i}[n]$ is a Kirchberg algebra, they both have the stable weak cancellation property.  By Lemma~3.15 of \cite{err:fullext}, $\mathfrak{A}_{i}$ has stable weak cancellation property.  By Lemma~\ref{l:unitaryidealkthy}, for each tight $C^{*}$-algebra $\mathfrak{A}$ over $X_{n}$, we have that $K_{X_{n}} ( \mathrm{Ad} ( u ) \vert_{ \mathfrak{A} } )$ for each unitary $u \in \multialg{ \mathfrak{A} }$.  A computation shows that $K_{X_{n}} ( - )$ satisfies (1), (2), and (3) of Theorem~\ref{t:metathm} since $K_{*} ( - )$ does.  The corollary now follows from Theorem~\ref{t:metathm} and Theorem~\ref{t:classmixed1}.
\end{proof}

\begin{corol}\label{c:unitclassmixed2}
Let $\mathfrak{A}_{1}$ and $\mathfrak{A}_{2}$ be unital, tight $C^{*}$-algebras over $X_{2}$ such that $\mathfrak{A}_{i} [ 2 ] \cong \K$ and $\mathfrak{A}_{i} [ 1 ]$ is a Kirchberg algebra in $\mathcal{N}$.  Let $x \in \kk ( X_{2} ; \mathfrak{A}_{1} , \mathfrak{A}_{2} )$ be an invertible such that $K_{X_{2}}(x)_{ Y }$ is an order isomorphism for each $Y \in \mathbb{LC} ( X_{2} )$ and $K_{X_{2}}(x)_{X_{2}} ( [ 1_{ \mathfrak{A}_{1} } ] ) = [ 1_{ \mathfrak{A}_{2} } ]$ in $K_{0} ( \mathfrak{A}_{2} )$.  If $\mathfrak{A}_{i} \otimes \K$ is semiprojective, then there exists an isomorphism $\ftn{\gamma }{ \mathfrak{A}_{1} \otimes \K }{ \mathfrak{A}_{2} \otimes \K }$ such that $\kk( X_{2} ; \gamma ) = x$.  
\end{corol}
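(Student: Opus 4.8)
The plan is to realize $x$ and its inverse by full $X_2$-equivariant homomorphisms between the stabilizations using the Existence Theorem, and then to feed these into Theorem~\ref{t:classmixed2}; most of the work lies in checking that the hypotheses of these two results hold.

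First I would record the basic structure of $\mathfrak{A}_1$ and $\mathfrak{A}_2$. Since $\mathfrak{A}_i[2]\cong\K$ and $\mathfrak{A}_i[1]$ is a Kirchberg algebra in $\mathcal{N}$, both distinguished subquotients are separable and nuclear and lie in $\mathcal{N}$, so $\mathfrak{A}_i$ is separable and nuclear and, by Corollary~4.13 of \cite{rmrn:bootstrap}, $\mathfrak{A}_i\in\mathcal{B}(X_2)$. As $\{2\}$ is the only nontrivial open subset of $X_2$, the ideal $\mathfrak{A}_i[2]$ is the unique proper nonzero ideal, hence essential, and since $\corona{\K}$ is simple the extension $0\to\K\to\mathfrak{A}_i\to\mathfrak{A}_i[1]\to0$ is full. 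Both $\K$ and $\mathfrak{A}_i[1]$ have real rank zero, and every projection of $\mathfrak{A}_i[1]$ lifts to a projection of $\mathfrak{A}_i$ (a self-adjoint lift $a$ of a projection has $a^2-a\in\K$, so $\spec(a)$ is discrete away from $\{0,1\}$ and $\chi_{[1/2,\infty)}(a)\in\mathfrak{A}_i$ is a projection lifting it); a functional-calculus argument on essential spectra then shows that self-adjoints of $\mathfrak{A}_i$ are approximated by self-adjoints of finite spectrum, so $\mathfrak{A}_i$ has real rank zero. Finally $\K$ and the Kirchberg algebra $\mathfrak{A}_i[1]$ have stable weak cancellation, so $\mathfrak{A}_i$ does too by Lemma~3.15 of \cite{err:fullext}.

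Next I would apply the Existence Theorem (Theorem~\ref{t:exist}) in case~(ii): $\mathfrak{A}_i[2]\cong\K$ is an AF-algebra and $\mathfrak{A}_i[1]$ is purely infinite simple, the remaining hypotheses were just verified, and $\Gamma(x)_Y=K_{X_2}(x)_Y$ is a positive isomorphism for every $Y$. This yields an $X_2$-equivariant homomorphism $\ftn{\phi}{\mathfrak{A}_1\otimes\K}{\mathfrak{A}_2\otimes\K}$ with $\kk(X_2;\phi)=y$, where $y=\kk(X_2;\id_{\mathfrak{A}_1}\otimes e_{11})^{-1}\times x\times\kk(X_2;\id_{\mathfrak{A}_2}\otimes e_{11})$, and with $\phi_{\{1\}},\phi_{\{2\}}$ injective. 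Applying the theorem to the invertible element $x^{-1}$ (again giving order isomorphisms on each $Y$) yields $\ftn{\psi}{\mathfrak{A}_2\otimes\K}{\mathfrak{A}_1\otimes\K}$ with $\kk(X_2;\psi)=y^{-1}$ and $\psi_{\{1\}},\psi_{\{2\}}$ injective. Since $\mathfrak{A}_i\otimes\K$ is again tight over $X_2$, Remark~\ref{r:X2hom} shows $\phi$ and $\psi$ are full, and by functoriality $\kk(X_2;\phi\circ\psi)=y^{-1}\times y=\kk(X_2;\id_{\mathfrak{A}_2\otimes\K})$ and $\kk(X_2;\psi\circ\phi)=y\times y^{-1}=\kk(X_2;\id_{\mathfrak{A}_1\otimes\K})$.

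Finally I would invoke Theorem~\ref{t:classmixed2} with $\mathcal{C}$ the class of Kirchberg algebras, which satisfies Definition~\ref{d:approxunitkk} by part~(1) of the remark following it. Its structural hypotheses hold --- $\mathfrak{A}_i$ is unital, separable, nuclear and tight over $X_2$ with $\mathfrak{A}_i[2]\cong\K$ and $\mathfrak{A}_i[1]\in\mathcal{C}$, $\mathfrak{A}_i\otimes\K$ is semiprojective by assumption, and $\mathfrak{A}_i$ has stable weak cancellation --- and $\phi,\psi$ provide exactly the required full homomorphisms with inverse $\kk(X_2)$-classes. The theorem then produces an isomorphism $\ftn{\gamma}{\mathfrak{A}_1\otimes\K}{\mathfrak{A}_2\otimes\K}$ with $\kk(X_2;\gamma)=\kk(X_2;\phi)=y$; since the stabilization classes $\kk(X_2;\id_{\mathfrak{A}_i}\otimes e_{11})$ are $\kk(X_2)$-equivalences and $y$ is the image of $x$ under the resulting identification $\kk(X_2;\mathfrak{A}_1\otimes\K,\mathfrak{A}_2\otimes\K)\cong\kk(X_2;\mathfrak{A}_1,\mathfrak{A}_2)$, this is the desired $\kk(X_2;\gamma)=x$. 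I expect the one genuinely delicate verification to be the real rank zero of $\mathfrak{A}_i$, which is what makes the Existence Theorem applicable; everything else is bookkeeping, as the hard approximate-intertwining analysis is already internal to Theorem~\ref{t:classmixed2}.
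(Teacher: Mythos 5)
Your argument is correct and follows essentially the same route as the paper's proof: establish separability, nuclearity, real rank zero, stable weak cancellation, and fullness of the extension $0\to\K\to\mathfrak{A}_i\to\mathfrak{A}_i[1]\to0$, then combine Theorem~\ref{t:exist}(ii) with Theorem~\ref{t:classmixed2} applied to the class of Kirchberg algebras. The only divergence is that the paper additionally cites Theorem~\ref{t:metathm}, which matters only for descending to a unital isomorphism $\mathfrak{A}_1\cong\mathfrak{A}_2$ (the purpose of the hypothesis $K_{X_2}(x)_{X_2}([1_{\mathfrak{A}_1}])=[1_{\mathfrak{A}_2}]$, which neither your argument nor the literally stated conclusion about the stabilizations actually uses).
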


\begin{proof}
Since $\mathfrak{A}_{i}[1]$ and $\mathfrak{A}_{i}[2]$ are separable and nuclear, we have that $\mathfrak{A}_{i}$ is separable and nuclear.  Since $\mathfrak{A}_{i}[2]$ and $\mathfrak{A}_{i}[1]$ have real rank zero and $K_{1} ( \mathfrak{A}_{i}[2] ) = 0$, we have that $\mathfrak{A}$ has real rank zero.  Since $\mathfrak{A}_{i} [ 2 ]$ is an AF-algebra and $\mathfrak{A}_{i}[1]$ is a Kirchberg algebra, they both have the stable weak cancellation property.  Therefore, by Lemma~3.15 of \cite{err:fullext}, $\mathfrak{A}$ has the stable weak cancellation property.   

By Lemma~1.5 of \cite{ERRshift}, the extension $0 \to \mathfrak{A}_{i} [ 2 ] \to \mathfrak{A}_{i} \to \mathfrak{A}_{i}[1] \to 0$ is full, and hence by Proposition~1.6 of \cite{ERRshift}, $0 \to \mathfrak{A}_{i} [ 2 ]  \otimes \K \to \mathfrak{A}_{i} \otimes \K \to \mathfrak{A}_{i}[1]  \otimes \K \to 0$ is full.  The corollary now follows from Theorem~\ref{t:exist}(ii), Theorem~\ref{t:classmixed2}, and Theorem~\ref{t:metathm}.
\end{proof}

It is an open question to determine if every unital, separable, nuclear, tight $C^{*}$-algebra   $\mathfrak{A}$ over $X_{2}$ whose unique proper nontrivial ideal is isomorphic to $\K$ and quotient is a Kirchberg algebra in $\mathcal{N}$ with finitely generated $K$-theory is semiprojective.  The following results show that under some $K$-theoretical conditions, $\mathfrak{A}$ is semiprojective. 

\begin{lemma}\label{l:graphsemiprojective}
Let $E$ be a graph with finitely many vertices such that $C^{*} (E)$ is a tight $C^{*}$-algebra over $X_{2}$ with $C^{*} (E) [1]$ being purely infinite.  Then $C^{*} (E)$ and $C^{*} (E) \otimes \K$ are semiprojective. 
\end{lemma}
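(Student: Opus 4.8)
The plan is to read off the graph-theoretic structure forced by the hypotheses and then appeal to the known characterisation of semiprojectivity for graph $C^{*}$-algebras. Since $C^{*}(E)$ is tight over $X_{2}$, its ideal lattice is exactly $\{0\} \subsetneq C^{*}(E)[2] \subsetneq C^{*}(E)$; in particular there are only finitely many ideals, so $E$ satisfies Condition~(K) and every ideal is gauge invariant. Hence $C^{*}(E)[2] = I_{(H,B)}$ for a saturated hereditary set $H \subsetneq E^{0}$ together with a set $B$ of breaking vertices, which is finite because $E^{0}$ is finite. Both $C^{*}(E)[2]$ and the quotient $C^{*}(E)[1]$, a graph $C^{*}$-algebra of the quotient graph $E/H$, are simple, and a simple graph $C^{*}$-algebra is either AF or a Kirchberg algebra. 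By hypothesis $C^{*}(E)[1]$ is a Kirchberg algebra, so the part of $E$ supported on $E^{0}\setminus H$ is cofinal and satisfies Condition~(L). The ideal $C^{*}(E)[2]$ is simple; when it is AF the finiteness of $E^{0}$ forces it to be elementary, i.e.\ $\cong M_{k}$ or $\cong \K$, and otherwise it too is a Kirchberg algebra.

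With this description I would invoke the semiprojectivity criterion for graph $C^{*}$-algebras. The only way such an algebra can fail to be semiprojective is through infinite-dimensional AF behaviour that is not finitely presented, which for a graph with finitely many vertices is detected by the gauge-simple subquotients together with the cycle structure. Here there are exactly two gauge-simple subquotients, $C^{*}(E)[1]$ and $C^{*}(E)[2]$. The quotient is a Kirchberg algebra with free $K_{1}$ and finitely generated $K_{0}$, hence semiprojective by Spielberg's results, and contributes no obstruction. If $C^{*}(E)[2]$ is also a Kirchberg algebra, then $C^{*}(E)$ is a purely infinite graph $C^{*}$-algebra with two-point primitive ideal space and finitely generated ideal-related $K$-theory, a situation covered by the semiprojectivity results for non-simple purely infinite graph $C^{*}$-algebras. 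If instead $C^{*}(E)[2]$ is elementary, then $C^{*}(E)$ is an extension of a Kirchberg algebra by $M_{k}$ or $\K$, the model case being the extended Cuntz algebra $0 \to \K \to C^{*}(E) \to \mathcal{O}_{n} \to 0$; here the return-path data coming from the hereditary set is finite, so the graph-theoretic criterion is met and $C^{*}(E)$ is semiprojective.

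For the stabilisation I would argue that $C^{*}(E) \otimes \K$ is again a graph $C^{*}$-algebra, realised as $C^{*}(F)$ for a graph $F$ obtained from $E$ by attaching tails, and that it retains the same two-layer structure: a simple ideal $C^{*}(E)[2] \otimes \K$ beneath a stable Kirchberg quotient $C^{*}(E)[1] \otimes \K$. The crucial point is that the obstruction to semiprojectivity here is one of AF versus proper infiniteness rather than of stability: a stable Kirchberg algebra with finitely generated $K$-theory is semiprojective, and placing the simple ideal below it does not create a badly presented AF subquotient. I would therefore verify the same graph-theoretic criterion on $F$ and conclude that $C^{*}(E) \otimes \K$ is semiprojective.

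The main obstacle I anticipate is the stabilised statement together with the mixed case. Semiprojectivity is not preserved under stabilisation in general, $\K = \C \otimes \K$ being the standard counterexample, so one cannot simply deduce semiprojectivity of $C^{*}(E) \otimes \K$ from that of $C^{*}(E)$. The heart of the argument is thus to confirm, through the graph-algebra classification of semiprojectivity, that an AF ideal sitting beneath a purely infinite quotient remains harmless after stabilisation, i.e.\ that the finitely generated, properly infinite behaviour at the level where stability is witnessed survives. By comparison, establishing gauge invariance of the unique ideal and reading off Condition~(L) on the quotient part from the abstract hypotheses is a secondary and more routine step.
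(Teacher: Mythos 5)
Your treatment of $C^{*}(E)$ itself is essentially the paper's: the paper simply cites the Eilers--Katsura results \cite{setk:semiproj} on semiprojectivity of unital graph $C^{*}$-algebras, and your graph-theoretic reading of the hypotheses is a (somewhat impressionistic) gloss on the same black box. The genuine gap is in the stabilisation step. You correctly flag that semiprojectivity does not pass to stabilisations in general, but the fix you propose --- realise $C^{*}(E)\otimes\K$ as $C^{*}(F)$ for a graph $F$ obtained by attaching tails and then ``verify the same graph-theoretic criterion on $F$'' --- does not go through: any such $F$ has infinitely many vertices, so the criterion for \emph{unital} graph $C^{*}$-algebras no longer applies, and no graph-theoretic characterisation of semiprojectivity for non-unital graph algebras is on the table (it would have to exclude $\K$ itself, which is a non-semiprojective graph algebra). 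Your sentence asserting that ``placing the simple ideal below it does not create a badly presented AF subquotient'' is precisely the claim that needs proof, and no mechanism is offered for it.

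The paper's mechanism is abstract rather than graph-theoretic, and it is the missing idea here: by Proposition~6.4 of \cite{semt_classgraphalg} the ideal $C^{*}(E)[2]$ is stable, so since $C^{*}(E)$ is unital, Lemma~1.5 of \cite{ERRshift} makes the extension $0\to C^{*}(E)[2]\to C^{*}(E)\to C^{*}(E)[1]\to 0$ a \emph{full} extension; combined with the hypothesis that the quotient $C^{*}(E)[1]$ is purely infinite, Proposition~3.21 and Corollary~3.22 of \cite{err:fullext} yield that $C^{*}(E)$ is \emph{properly infinite}. Theorem~4.1 of \cite{bb:semiproj} (Blackadar) then states that the stabilisation of such a semiprojective, properly infinite algebra is again semiprojective, which is exactly what neutralises the $\K=\C\otimes\K$ obstruction you identified. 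Without extracting proper infiniteness of the whole algebra from fullness of the extension and pure infiniteness of the quotient, your argument for $C^{*}(E)\otimes\K$ does not close.
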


\begin{proof}
The fact that $C^{*} (E)$ is semiprojective follows from the results of \cite{setk:semiproj}.  By Proposition~6.4 of \cite{semt_classgraphalg}, $C^{*} (E) [ 2 ]$ is stable.  Since $C^{*} (E)$ is a unital $C^{*}$-algebra, by Lemma~1.5 of \cite{ERRshift}, the extension $\mathfrak{e} : 0 \to C^{*} (E) [2] \to C^{*} (E) \to C^{*} (E)[1] \to 0$ is a full extension.  By Proposition~3.21 and Corollary~3.22 of \cite{err:fullext}, $C^{*} (E)$ is properly infinite.  Therefore, by Theorem~4.1 of \cite{bb:semiproj}, $C^{*} (E) \otimes \K$ is semiprojective.    
\end{proof}

\begin{propo}\label{p:lookslikegraph}
Let $\mathfrak{A}$ be unital, separable, nuclear, tight $C^{*}$-algebras over $X_{2}$.  If $\mathfrak{A}[2] \cong \K$ and $\mathfrak{A}[1]$ is a Kirchberg algebra in $\mathcal{N}$ such that $\mathrm{rank} ( K_{1} ( \mathfrak{A}[1]  ) ) \leq \mathrm{rank} ( K_{0} ( \mathfrak{A} [1] ) )$, $K_{1} ( \mathfrak{A}[1] )$ is free, and the $K$-groups of $\mathfrak{A}[{i}]$ are finitely generated, then $\mathfrak{A}$ and $\mathfrak{A} \otimes \K$ are semiprojective.  Consequently, $\mathfrak{A}$ semiprojective.
\end{propo}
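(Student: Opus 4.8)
The plan is to show that under these hypotheses $\mathfrak{A}$ is isomorphic to the graph $C^{*}$-algebra of a finite graph whose gauge quotient is purely infinite, and then to invoke Lemma~\ref{l:graphsemiprojective}, which already records that both $C^{*}(E)$ and $C^{*}(E)\otimes\K$ are semiprojective for such graphs. So the real content is to exhibit a finite graph $E$ together with an isomorphism $\mathfrak{A}\cong C^{*}(E)$.

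First I would \emph{realize the invariant}. The hypotheses that $K_{1}(\mathfrak{A}[1])$ be free, that $\mathrm{rank}(K_{1}(\mathfrak{A}[1]))\le\mathrm{rank}(K_{0}(\mathfrak{A}[1]))$, and that all the $K$-groups be finitely generated are precisely the $K$-theoretic conditions characterizing when a unital Kirchberg algebra in $\mathcal{N}$ with finitely generated $K$-theory is isomorphic to the $C^{*}$-algebra of a finite graph. Building on the range-of-invariant results for graph $C^{*}$-algebras with one non-trivial ideal in \cite{semt_classgraphalg}, I would construct a finite graph $E$ such that $C^{*}(E)$ is a unital tight $C^{*}$-algebra over $X_{2}$ with $C^{*}(E)[2]\cong\K$ and $C^{*}(E)[1]$ a unital Kirchberg algebra in $\mathcal{N}$, and so that there is an isomorphism of ordered ideal-related $K$-theory $K_{X_{2}}^{+}(\mathfrak{A})\cong K_{X_{2}}^{+}(C^{*}(E))$ carrying $[1_{\mathfrak{A}}]$ to $[1_{C^{*}(E)}]$. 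The ideal $\K$ is produced by arranging the vertices carrying the quotient to emit infinitely many paths into a sink, while the boundary (exponential and index) maps of the six-term sequence and the unit are matched by the usual vertex/edge bookkeeping; it is the rank inequality and freeness that guarantee the quotient data can be closed up into a \emph{finite} graph.

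Next I would promote the agreement of invariants to an honest isomorphism, and here the delicate point is to \emph{avoid circularity}: both Corollary~\ref{c:unitclassmixed2} and Theorem~\ref{t:classmixed2} take semiprojectivity of the stabilizations as an input, which is exactly what we are trying to establish. To break this loop I would first invoke the stable classification of extensions of classifiable $C^{*}$-algebras from \cite{ERRshift} (equivalently the relevant part of \cite{err:fullext}), which classifies $\mathfrak{A}\otimes\K$ by its ideal-related six-term exact sequence \emph{without} any semiprojectivity hypothesis. Since the invariants agree by the previous step, this yields $\mathfrak{A}\otimes\K\cong C^{*}(E)\otimes\K$; as $C^{*}(E)\otimes\K$ is semiprojective by Lemma~\ref{l:graphsemiprojective} and semiprojectivity is an isomorphism invariant, $\mathfrak{A}\otimes\K$ is semiprojective.

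Finally, with $\mathfrak{A}\otimes\K$ now known to be semiprojective, the hypotheses of Corollary~\ref{c:unitclassmixed2} are met for the pair $\mathfrak{A},C^{*}(E)$, and feeding the resulting unit-class-preserving stable isomorphism into the meta-theorem Theorem~\ref{t:metathm}—using that both algebras have the stable weak cancellation property, exactly as in the proof of Corollary~\ref{c:unitclassmixed2}—produces a unital isomorphism $\mathfrak{A}\cong C^{*}(E)$. Lemma~\ref{l:graphsemiprojective} then gives that $\mathfrak{A}$ itself is semiprojective. The step I expect to be the main obstacle is the realization: one must match the ordered six-term sequence, \emph{both} boundary maps, and the position of the unit simultaneously by a single finite graph realizing $\K$ as its ideal, and it is precisely the freeness of $K_{1}(\mathfrak{A}[1])$ and the rank inequality that make this combinatorially achievable.
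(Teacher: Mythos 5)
Your proposal follows the paper's proof almost exactly in its core: realize $K_{X_{2}}^{+}(\mathfrak{A})$ by a finite-vertex graph $E$ (the paper cites Theorem~6.4 of \cite{ektw:range}, after first recording that the extension is full and that $K_{0}(\mathfrak{A})_{+}=K_{0}(\mathfrak{A})$ so the invariant lies in the range), then use the stable classification of \cite{ERRshift} (Theorem~3.9 there) --- which indeed needs no semiprojectivity hypothesis, exactly as you observe --- to get $\mathfrak{A}\otimes\K\cong C^{*}(E)\otimes\K$, and transfer semiprojectivity of $C^{*}(E)\otimes\K$ from Lemma~\ref{l:graphsemiprojective}. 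Where you diverge is the last step, deducing semiprojectivity of $\mathfrak{A}$ itself: the paper simply cites Proposition~2.7 of \cite{bb:semiproj}, by which semiprojectivity descends from $\mathfrak{A}\otimes\K$ to the full corner $\mathfrak{A}=(1_{\mathfrak{A}}\otimes e_{11})(\mathfrak{A}\otimes\K)(1_{\mathfrak{A}}\otimes e_{11})$, whereas you run the full unital classification machinery (Corollary~\ref{c:unitclassmixed2} plus Theorem~\ref{t:metathm}) to produce an actual isomorphism $\mathfrak{A}\cong C^{*}(E)$. Your route is logically sound and not circular, but it is heavier and carries an extra burden you only assert rather than verify: the realization must be arranged so that the isomorphism of invariants carries $[1_{\mathfrak{A}}]$ to $[1_{C^{*}(E)}]$, a scale condition that the cited range theorem does not obviously hand you for free and that the paper's corner argument sidesteps entirely. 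The paper's approach buys economy and independence from the unit class; yours buys the stronger (but here unneeded) conclusion that $\mathfrak{A}$ is itself a graph $C^{*}$-algebra.
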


\begin{proof}
By Lemma~1.5 of \cite{ERRshift}, $\mathfrak{e} : 0 \to \mathfrak{A}[2] \to \mathfrak{A} \to \mathfrak{A}[1] \to 0$ is a full extension.  By Corollary~3.22 of \cite{err:fullext}, $K_{0} ( \mathfrak{A} )_{+} = K_{0} ( \mathfrak{A} )$.  By Theorem~6.4 of \cite{ektw:range}, there exists a graph $E$ with finitely many vertices such that $K_{X_{2}}^{+} ( \mathfrak{A} ) \cong K_{X_{2}}^{+} ( C^{*} (E) )$ such that $C^{*} (E)$ is a tight $C^{*}$-algebra over $X_{2}$.  Since $E$ has finitely many vertices, $C^{*} (E)$ is unital.  Since $K_{X_{2}}^{+} ( \mathfrak{A} ) \cong K_{X_{2}}^{+} ( C^{*} (E) )$, we have that $C^{*} (E)[1]$ is a Kirchberg algebra.  By Theorem~3.9 of \cite{ERRshift}, we have that $\mathfrak{A}\otimes \K \cong C^{*} (E) \otimes \K$.  By Lemma~\ref{l:graphsemiprojective}, $C^{*} (E)$ and $C^{*} (E) \otimes \K$ are semiprojective.  Hence, by Proposition~2.7 of \cite{bb:semiproj}, $\mathfrak{A}$ and $\mathfrak{A} \otimes \K$ are semiprojective. 
   \end{proof}
   
   \begin{corol}\label{c:classmixed2lookslikegraph}
Let $\mathfrak{A}_{1}$ and $\mathfrak{A}_{2}$ be unital, tight $C^{*}$-algebras over $X_{2}$ such that $\mathfrak{A}_{i} [ 2 ] \cong \K$ and $\mathfrak{A}_{i} [ 1 ]$ is a Kirchberg algebra in $\mathcal{N}$ such that $\mathrm{rank} ( K_{1} ( \mathfrak{A}[1]  ) ) \leq \mathrm{rank} ( K_{0} ( \mathfrak{A} [1] ) )$, $K_{1} ( \mathfrak{A}[1] )$ is free, and the $K$-groups of $\mathfrak{A}_{i}$ are finitely generated.  Let $x \in \kk ( X_{2} ; \mathfrak{A}_{1} , \mathfrak{A}_{2} )$ be an invertible such that $K_{X_{2}}(x)_{ Y }$ is an order isomorphism for each $Y \in \mathbb{LC} ( X_{2} )$ and $K_{X_{2}}(x)_{X_{2}} ( [ 1_{ \mathfrak{A}_{1} } ] ) = [ 1_{ \mathfrak{A}_{2} } ]$ in $K_{0} ( \mathfrak{A}_{2} )$.  Then there exists an isomorphism $\ftn{\gamma }{ \mathfrak{A}_{1} \otimes \K }{ \mathfrak{A}_{2} \otimes \K }$ such that $\kk( X_{2} ; \gamma ) = x$.  
\end{corol}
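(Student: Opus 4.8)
The plan is to deduce this corollary directly from Corollary~\ref{c:unitclassmixed2}, whose hypotheses differ from those stated here only in that it assumes outright the semiprojectivity of $\mathfrak{A}_{i} \otimes \K$. All the work of verifying that this extra hypothesis follows from the present $K$-theoretic assumptions has already been carried out in Proposition~\ref{p:lookslikegraph}, so the proof amounts to checking that the hypotheses line up and then invoking the earlier result.

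First I would verify that each $\mathfrak{A}_{i}$ satisfies the hypotheses of Proposition~\ref{p:lookslikegraph}. By assumption $\mathfrak{A}_{i}$ is a unital, tight $C^{*}$-algebra over $X_{2}$ with $\mathfrak{A}_{i}[2] \cong \K$ and $\mathfrak{A}_{i}[1]$ a Kirchberg algebra in $\mathcal{N}$; separability and nuclearity of $\mathfrak{A}_{i}$ follow from those of the ideal $\mathfrak{A}_{i}[2] \cong \K$ and the quotient $\mathfrak{A}_{i}[1]$, exactly as recorded at the start of the proof of Corollary~\ref{c:unitclassmixed2}. The three $K$-theoretic conditions required by Proposition~\ref{p:lookslikegraph}---namely $\mathrm{rank}(K_{1}(\mathfrak{A}_{i}[1])) \leq \mathrm{rank}(K_{0}(\mathfrak{A}_{i}[1]))$, freeness of $K_{1}(\mathfrak{A}_{i}[1])$, and finite generation of the $K$-groups of $\mathfrak{A}_{i}$---are precisely the conditions imposed in the statement of this corollary. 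Hence Proposition~\ref{p:lookslikegraph} applies and yields that $\mathfrak{A}_{i}$ and $\mathfrak{A}_{i} \otimes \K$ are semiprojective for $i = 1, 2$.

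With the semiprojectivity of $\mathfrak{A}_{i} \otimes \K$ now established, every hypothesis of Corollary~\ref{c:unitclassmixed2} is met: $\mathfrak{A}_{1}$ and $\mathfrak{A}_{2}$ are unital tight $C^{*}$-algebras over $X_{2}$ with $\mathfrak{A}_{i}[2] \cong \K$ and $\mathfrak{A}_{i}[1]$ a Kirchberg algebra in $\mathcal{N}$, the element $x \in \kk(X_{2}; \mathfrak{A}_{1}, \mathfrak{A}_{2})$ is invertible with $K_{X_{2}}(x)_{Y}$ an order isomorphism for each $Y \in \mathbb{LC}(X_{2})$ and $K_{X_{2}}(x)_{X_{2}}([1_{\mathfrak{A}_{1}}]) = [1_{\mathfrak{A}_{2}}]$. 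Applying Corollary~\ref{c:unitclassmixed2} to this data produces an isomorphism $\ftn{\gamma}{\mathfrak{A}_{1} \otimes \K}{\mathfrak{A}_{2} \otimes \K}$ with $\kk(X_{2}; \gamma) = x$, which is the desired conclusion.

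I do not expect any genuine obstacle in this argument: the substantive difficulty has been isolated into Proposition~\ref{p:lookslikegraph}, which in turn rests on realizing the ordered ideal-related $K$-theory by a finite graph algebra (Theorem~6.4 of \cite{ektw:range}), the semiprojectivity of the resulting unital graph algebra and its stabilization (Lemma~\ref{l:graphsemiprojective}), and the transfer of semiprojectivity along a stable isomorphism (Proposition~2.7 of \cite{bb:semiproj}). The only point requiring a moment's care is the implicit convention that the displayed conditions written for ``$\mathfrak{A}[1]$'' are to be read as holding for both $\mathfrak{A}_{1}[1]$ and $\mathfrak{A}_{2}[1]$, so that Proposition~\ref{p:lookslikegraph} may be applied to each $\mathfrak{A}_{i}$ separately.
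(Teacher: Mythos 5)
Your proposal is correct and follows exactly the paper's own argument: the paper's proof consists of the single line ``This follows from Proposition~\ref{p:lookslikegraph} and Corollary~\ref{c:unitclassmixed2},'' and you have simply spelled out the hypothesis-checking that this one-liner leaves implicit. No further comment is needed.
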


\begin{proof}
This follows from Proposition~\ref{p:lookslikegraph} and Corollary~\ref{c:unitclassmixed2}.
\end{proof}

\section{Applications}

Let $E$ be a graph satisfying Condition (K) (in particular, if $C^{*} (E)$ has finitely many ideals, then $E$ satisfies Condition (K)).  Let $\mathfrak{I}_{1}, \mathfrak{I}_{2}$ be ideals of $C^{*} ( E )$ such that $\mathfrak{I}_{1} \subseteq \mathfrak{I}_{2}$ and $\mathfrak{I}_{2} / \mathfrak{I}_{1}$ is simple.  Then by Theorem~5.1 of \cite{ermt:idealsgraph} and Corollary~3.5 of \cite{bgrs_idealstrucgraph}, $\mathfrak{I}_{2} / \mathfrak{I}_{1}$ is a simple graph $C^{*}$-algebra.  Hence, $\mathfrak{I}_{2} / \mathfrak{I}_{1}$ is either a Kirchberg algebra or an AF algebra.

\subsection{Classification of graph $C^{*}$-algebras with exactly one ideal}

\begin{lemma}\label{l:finitegraphaf}
Let $E$ be a graph with finitely many vertices such that $C^{*} (E)$ is a simple AF-algebra.  Then $C^{*} (E) \otimes \K \cong \K$.  Consequently, if $F$ is a graph with finitely many vertices such that $C^{*} (F)$ is a tight $C^{*}$-algebra over $X_{2}$ and $C^{*} (F)[2]$ is an AF-algebra, then $C^{*}(F)[2] \cong \K$.  
\end{lemma}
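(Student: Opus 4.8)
The plan is to establish the first assertion---that a simple AF graph algebra on finitely many vertices is stably isomorphic to $\K$---and then to derive the consequence from it. First I would use the standard fact that $C^{*}(E)$ is AF exactly when $E$ has no cycle. Since $E^{0}$ is finite and there are no cycles, no path can revisit a vertex, so every path is finite and terminates; in particular $E$ has at least one sink $v_{0}$, that is, a vertex emitting no edges.

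The heart of the argument, and the step I expect to be the main obstacle, is to rule out infinite emitters. Because $C^{*}(E)$ is simple, the only saturated hereditary subsets of $E^{0}$ are $\emptyset$ and $E^{0}$: a nontrivial such subset $H$ would produce a nontrivial ideal $\mathfrak{I}_{H}$ (nonzero since $H\neq\emptyset$, proper since the quotient by $\mathfrak{I}_{H}$ is nonzero when $H\neq E^{0}$). Hence the saturated hereditary closure of the hereditary set $\{v_{0}\}$, being nonempty, equals $E^{0}$. However, the saturation procedure adjoins only regular vertices $v$ (those with $0<|s^{-1}(v)|<\infty$), and an infinite emitter is never regular, so no infinite emitter can ever enter the closure of $\{v_{0}\}$. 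Since that closure is all of $E^{0}$, there are no infinite emitters, i.e.\ $E$ is row-finite. A finite, row-finite, acyclic graph has only finitely many edges, so $C^{*}(E)$ is finite-dimensional, and being simple it is isomorphic to some $M_{n}(\C)$; therefore $C^{*}(E)\otimes\K\cong M_{n}(\C)\otimes\K\cong\K$. Everything after the exclusion of infinite emitters is routine finite-dimensional representation theory.

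For the consequence, let $F$ be a finite graph with $C^{*}(F)$ tight over $X_{2}$ and $C^{*}(F)[2]$ an AF-algebra, and write $\mathfrak{I}=C^{*}(F)[2]$. Since $C^{*}(F)$ is tight over $X_{2}$, its ideal lattice is that of $X_{2}$, so $\mathfrak{I}$ is a minimal, hence simple, nonzero proper ideal, and it is AF by hypothesis. As $\mathfrak{I}$ corresponds to a saturated hereditary subset $H\subseteq F^{0}$, it is Morita equivalent to the graph algebra of the subgraph of $F$ determined by $H$, which again has finitely many vertices and is simple and AF; by the first assertion this subgraph algebra is stably isomorphic to $\K$, whence $\mathfrak{I}\otimes\K\cong\K$. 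Finally, because $X_{2}$ is connected and $\mathrm{Prim}(C^{*}(F))$ is homeomorphic to $X_{2}$, the algebra $C^{*}(F)$ has no nontrivial central projection; as $\mathfrak{I}$ is a proper nonzero ideal, a unit for $\mathfrak{I}$ would be exactly such a projection, so $\mathfrak{I}$ is non-unital. A simple AF algebra that is stably isomorphic to $\K$ is isomorphic to $M_{n}(\C)$ for some $n\le\infty$ (with $M_{\infty}=\K$), and the unique non-unital one among these is $\K$; hence $\mathfrak{I}\cong\K$.
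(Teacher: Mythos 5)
Your proof is correct, and its overall architecture matches the paper's: acyclicity from the AF hypothesis, exclusion of infinite emitters, finite\-/dimensionality plus simplicity giving $\mathsf{M}_{n}$, and then the second statement via Morita equivalence of the ideal with a subgraph algebra together with non-unitality forced by tightness. The one step you argue genuinely differently is the exclusion of infinite emitters. The paper quotes Corollaries~2.13 and 2.15 of Drinen--Tomforde: simplicity forces every vertex to connect to every infinite emitter, so an infinite emitter $v_{0}$ would admit an edge $e$ out of it together with a return path from $r(e)$ back to $v_{0}$, producing a cycle. You instead locate a sink (which exists because the vertex set is finite and there are no cycles) and observe that the saturated hereditary closure of a sink --- which must be all of $E^{0}$ by simplicity --- is built by saturation steps alone, since a set obtained by saturating a hereditary set stays hereditary, and therefore contains only regular vertices besides the sink itself. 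Both arguments are valid; yours is more self-contained, using only the correspondence between saturated hereditary subsets and ideals rather than the full Drinen--Tomforde simplicity criterion, while the paper's is shorter given the citation. The rest of your argument --- the reduction of $C^{*}(F)[2]$ to a subgraph algebra with finitely many vertices, and the observation that a unit for the ideal would be a nontrivial central projection contradicting the connectedness of $X_{2}$ --- is exactly what the paper does.
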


\begin{proof}
We claim that $E$ is a finite graph.  By Corollary~2.13 and Corollary~2.15 of \cite{DT1}, $E$ has no cycles, and for every vertex $v_{0}$ that emits infinitely many edges and for each vertex $v$, there exists a path from $v$ to $v_{0}$.  Since $E$ has no cycles, we have that every vertex of $E$ emits only finitely many edges.  Hence, $E$ is a finite graph.  By Proposition~1.18 of \cite{raeburn}, $C^{*} (E) \cong \mathsf{M}_{n}$.

We now prove the second statement.  First note that $C^{*} (F)[2]$ is a simple AF-algebra.  Since $C^{*} (F)[2]$ is stably isomorphic to a subgraph of $E$, $C^{*} (F)[2] \otimes \K \cong C^{*} (E)$ for some graph $E$ with finitely many vertices.  Since $C^{*} (E)$ is a simple AF-algebra, we have that $C^{*} (E) \otimes \K \cong \K$.   Hence, $C^{*} (F)[2] \otimes \K \cong \K$ which implies that $C^{*} (F)[2] \cong \mathsf{M}_{n}$ or $C^{*} (F)[2] \cong \K$.  Since $C^{*} (F)[2]$ is a non-unital $C^{*}$-algebra ($C^{*} (E)$ is a tight $C^{*}$-algebra over $X_{2}$), we have that $C^{*} (F)[2] \cong \K$.
\end{proof}

\begin{defin}
For a $C^{*}$-algebra $\mathfrak{A}$, set
\begin{align*}
\Sigma \mathfrak{A} = \setof{ x \in K_{0} ( \mathfrak{A} ) }{ \text{$x = [ p ]$ for some projection $p$ in $\mathfrak{A}$}}.
\end{align*}
Let $\mathfrak{B}$ be a $C^{*}$-algebra.  An order isomorphism $\ftn{ \alpha }{ K_{0} ( \mathfrak{A} ) }{ K_{0} ( \mathfrak{B} ) }$ is \emph{scale preserving} if one of the following holds:
\begin{itemize}
\item[(1)] $\mathfrak{A}$ is unital if and only if $\mathfrak{B}$ unital and $\alpha ( [ 1_{ \mathfrak{A} } ] ) = [ 1_{ \mathfrak{B} } ]$.

\item[(2)] $\mathfrak{A}$ is non-unital if and only if $\mathfrak{B}$ is non-unital and $\alpha ( \Sigma \mathfrak{A} ) = \Sigma \mathfrak{B}$.
\end{itemize}
\end{defin}

\begin{theor}\label{t:class1ideal}
Let $E_{1}$ and $E_{2}$ be graphs with finitely many vertices and $C^{*} ( E_{i} )$ is a tight $C^{*}$-algebra over $X_{2}$.  If $\ftn{ \alpha }{ K_{X_{2}}^{+} ( C^{*} ( E_{1} ) ) }{ K_{X_{2}}^{+} ( C^{*} ( E_{2} ) ) }$ is an isomorphism such that $\alpha_{Y}$ is scale preserving for all $Y \in \mathbb{LC} ( X_{2} )$, then there exists an isomorphism $\ftn{ \phi }{ C^{*} ( E_{1} ) }{ C^{*} ( E_{2} ) }$ such that $K_{X_{2}} ( \phi ) = \alpha$.
\end{theor}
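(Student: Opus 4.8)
The plan is to split into cases according to the types of the simple subquotients $C^{*}(E_{i})[1]$ and $C^{*}(E_{i})[2]$, quote the appropriate classification in each case, and use the meta-theorem Theorem~\ref{t:metathm} to pass from stable to unital isomorphisms where needed. First I would observe that $C^{*}(E_{i})$ is nuclear and each $C^{*}(E_{i})[k]$ is a simple graph $C^{*}$-algebra, hence in $\mathcal{N}$, so $C^{*}(E_{i}) \in \mathcal{B}(X_{2})$ by Corollary~4.13 of \cite{rmrn:bootstrap}. Forgetting the order, Corollary~\ref{c:uctX} lifts $\alpha$ to an invertible $x \in \kk(X_{2}; C^{*}(E_{1}), C^{*}(E_{2}))$ with $\Gamma(x)_{Y} = \alpha_{Y}$ an order isomorphism for every $Y \in \mathbb{LC}(X_{2})^{*}$.

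Next I would classify the subquotients. Since $E_{i}$ has finitely many vertices, $C^{*}(E_{i})$ and its quotient $C^{*}(E_{i})[1]$ are unital, and $C^{*}(E_{i})[1]$, being a simple graph $C^{*}$-algebra, is either a matrix algebra or a unital Kirchberg algebra. As $\mathrm{Prim}(C^{*}(E_{i})) \cong X_{2}$ is connected, the ideal $C^{*}(E_{i})[2]$ is essential and therefore non-unital; being simple it is either AF, in which case Lemma~\ref{l:finitegraphaf} gives $C^{*}(E_{i})[2] \cong \K$, or Kirchberg, in which case it is stable. The order isomorphisms $\alpha_{\{1\}}$ and $\alpha_{\{2\}}$ match the types on the two sides, because the positive cone of $K_{0}$ is a proper cone for an AF algebra but is all of $K_{0}$ for a purely infinite simple algebra. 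This leaves four cases.

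In the two mixed cases I would invoke the results of this section. When $C^{*}(E_{i})[2]$ is Kirchberg and $C^{*}(E_{i})[1]$ is AF, real rank zero holds because $E_{i}$ satisfies Condition~(K), so Corollary~\ref{c:unitclassmixed1} produces an isomorphism $\phi$ with $K_{X_{2}}(\phi) = \alpha$ directly. When $C^{*}(E_{i})[2] \cong \K$ and $C^{*}(E_{i})[1]$ is Kirchberg, the $K$-theoretic hypotheses of Corollary~\ref{c:classmixed2lookslikegraph} are met for finite graphs: the groups are finitely generated, $K_{1}$ is free, and $\mathrm{rank}\, K_{1}(C^{*}(E_{i})[1]) \leq \mathrm{rank}\, K_{0}(C^{*}(E_{i})[1])$ by the kernel/cokernel description of graph $K$-theory. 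That corollary then yields a stable isomorphism inducing $x$, and I would descend to a unital isomorphism inducing $\alpha$ through Theorem~\ref{t:metathm}, taking $\mathsf{F} = K_{X_{2}}^{+}$ and $\mathsf{G}$ the functor reading off $K_{0}$ on $Y = X_{2}$; its remaining hypotheses are supplied by Lemma~\ref{l:unitaryidealkthy}, by the stable weak cancellation of $C^{*}(E_{i})$ (Lemma~3.15 of \cite{err:fullext}), and by scale-preservation at $Y = X_{2}$, which is exactly $\mathsf{G}(\alpha)([1]) = [1]$.

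For the two pure cases the stable classification is classical. If both subquotients are AF then $E_{i}$ has no cycles, so $C^{*}(E_{i})$ is AF and Elliott's theorem \cite{af} --- applied to the scaled ordered $K_{0}$ together with the order ideal corresponding to $C^{*}(E_{i})[2]$ --- yields $\phi$ inducing $\alpha$. If both are Kirchberg then $C^{*}(E_{i})$ is a unital, real-rank-zero, purely infinite algebra with one ideal, classified stably by its six-term sequence \cite{extpurelyinf}, and Theorem~\ref{t:metathm} again descends to the unital level. The step I expect to be the main obstacle is verifying hypothesis~(2) of Theorem~\ref{t:metathm} in the case $C^{*}(E_{i})[2] \cong \K$, since this requires the semiprojectivity of $C^{*}(E_{i})$ and $C^{*}(E_{i}) \otimes \K$, which is not automatic and must be extracted from Proposition~\ref{p:lookslikegraph}; one therefore has to confirm that every finite-graph algebra occurring here satisfies that proposition's rank, freeness, and finite-generation hypotheses.
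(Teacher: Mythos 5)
Your proposal is correct and follows essentially the same route as the paper: the same case split on the types of the two simple subquotients, with Corollary~\ref{c:unitclassmixed1}, Corollary~\ref{c:classmixed2lookslikegraph} together with Lemma~\ref{l:finitegraphaf}, Elliott's theorem, and the purely infinite classification handling the respective cases, and Corollary~\ref{c:uctX} supplying the $\kk(X_2)$-lift. The only cosmetic difference is that in the two-Kirchberg case the paper cites the already-unital Theorem~2.4 of \cite{rr_cexpure} where you re-derive it from R{\o}rdam's stable result via Theorem~\ref{t:metathm}, and you are somewhat more explicit than the paper in verifying real rank zero, the type-matching forced by the order isomorphisms, and the $K$-theoretic hypotheses of Proposition~\ref{p:lookslikegraph}.
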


\begin{proof}
Since $E_{i}$ has finitely many vertices, $C^{*} ( E_{1} )$ and $C^{*} ( E_{2} )$ are unital $C^{*}$-algebras. 

\medskip

\noindent \emph{Case 1:  Suppose $C^{*} ( E_{1} )$ is an AF-algebra.}  Then $C^{*} ( E_{2} )$ is an AF-algebra.  Hence, the result follows from Elliott's classification of AF-algebras \cite{af}.

\medskip

\noindent \emph{Case 2:  Suppose $C^{*} ( E_{1} )$ is not an AF-algebra.}  Then $C^{*} ( E_{2} )$ is not an AF-algebra. 

\emph{Subcase 2.1: Suppose $C^{*} ( E_{1} ) [1]$ is an AF-algebra.}  Then $C^{*} ( E_{2} ) [ 1 ]$ is an AF-algebra.  By Corollary~\ref{c:unitclassmixed1} and Corollary~\ref{c:uctX}, there exists an isomorphism $\ftn{ \phi }{ C^{*} ( E_{1} ) }{ C^{*} ( E_{2} ) }$ such that $K_{X_{2}} ( \phi ) = \alpha$.

\medskip

\emph{Subcase 2.2: Suppose $C^{*} ( E_{1} )[ 1 ]$ is a Kirchberg algebra.}  Then $C^{*} ( E_{2} ) [ 1 ]$ is a Kirchberg algebra.  Since $C^{*} ( E_{i} )$ is not an AF-algebra, either $C^{*} ( E_{i}) [2]$ is Kirchberg algebra or an AF-algebra.  

Suppose $C^{*} ( E_{i} ) [ 2 ]$ is a Kirchberg algebra.  By Theorem~2.4 of \cite{rr_cexpure}, there exists an isomorphism $\ftn{ \phi }{ C^{*} ( E_{1} ) }{ C^{*} ( E_{2} ) }$ such that $K_{X_{2}} ( \phi ) = \alpha$.  Suppose $C^{*} ( E_{i} ) [2]$ is an AF-algebra.  Then, by Lemma~\ref{l:finitegraphaf}, $C^{*} ( E_{i} ) [ 2 ] \cong \K$.  By Corollary~\ref{c:classmixed2lookslikegraph} and Corollary~\ref{c:uctX}, there exists an isomorphism $\ftn{ \phi }{ C^{*} ( E_{1} ) }{ C^{*} ( E_{2} ) }$ such that $K_{X_{2}} ( \phi ) = \alpha$.
\end{proof}

The following theorem completes the classification of graph $C^{*}$-algebras with exactly one non-trivial ideal.

\begin{corol}\label{c:class1ideal}
Let $E_{1}$ and $E_{2}$ be graphs such that $C^{*} ( E_{i} )$ is a tight $C^{*}$-algebra over $X_{2}$.  Then $C^{*} (E_{1} ) \cong C^{*} ( E_{2} )$ if and only if there exists an isomorphism $\ftn{ \alpha }{ K_{X_{2}}^{+} ( C^{*} ( E_{1} ) ) }{ K_{X_{2}}^{+} ( C^{*} ( E_{2} ) ) }$ such that $\alpha_{Y}$ is a scale preserving isomorphism for all $Y \in \mathbb{LC} ( X_{2} )$.  
\end{corol}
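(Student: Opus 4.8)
The plan is to deduce this from the unital classification just proved in Theorem~\ref{t:class1ideal} together with the classification of \emph{non-unital} graph $C^{*}$-algebras with exactly one non-trivial ideal obtained in \cite{err:fullext}. The forward implication is immediate from functoriality: any isomorphism $\ftn{\phi}{C^{*}(E_1)}{C^{*}(E_2)}$ induces an isomorphism $K_{X_2}(\phi)$ of ideal-related $K$-theory which is automatically an order isomorphism on each $K_0(C^{*}(E_i)(Y))$ and which sends the class of a projection to the class of its image; hence $K_{X_2}^{+}(\phi)$ is scale preserving on every $Y \in \mathbb{LC}(X_2)$. So it remains to treat the backward implication.

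For the backward implication, suppose $\ftn{\alpha}{K_{X_2}^{+}(C^{*}(E_1))}{K_{X_2}^{+}(C^{*}(E_2))}$ is scale preserving on each $Y$. First I would evaluate the scale-preserving condition at $Y = X_2$: clauses (1) and (2) in the definition of scale preserving split precisely along the unital/non-unital dichotomy, so $C^{*}(E_1)$ is unital if and only if $C^{*}(E_2)$ is. Recalling that a graph $C^{*}$-algebra $C^{*}(E)$ is unital exactly when $E^{0}$ is finite (the convergence argument behind Proposition~1.18 of \cite{raeburn}), this separates the problem into a unital case and a non-unital case, and in each case both graphs lie in the same class.

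In the unital case both $E_i$ have finitely many vertices, so Theorem~\ref{t:class1ideal} applies verbatim: its hypothesis is exactly that $\alpha_Y$ is a scale-preserving isomorphism for all $Y \in \mathbb{LC}(X_2)$, and it yields an isomorphism $\ftn{\phi}{C^{*}(E_1)}{C^{*}(E_2)}$ with $K_{X_2}(\phi) = \alpha$. In the non-unital case the desired isomorphism is precisely the content of the main classification theorem of \cite{err:fullext}, whose invariant for non-unital graph $C^{*}$-algebras with one non-trivial ideal is the associated six-term exact sequence with its order and scale data. I would identify that invariant with $K_{X_2}^{+}(-)$ equipped with scale-preserving isomorphisms, using the equivalence of $\mathrm{FK}_{X_2}$ with $K_{X_2}$ (Remark~\ref{filtrated}) and the dictionary between $C^{*}$-algebras over $X_2$ and extensions.

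The only genuine bookkeeping — and the step I would verify most carefully — is that the invariant used in \cite{err:fullext} coincides with $K_{X_2}^{+}$ under scale-preserving isomorphisms, so that no extra data is tacitly invoked in the non-unital theorem, and in particular that the non-unital scale clause (2), phrased through $\Sigma C^{*}(E_i)(Y)$, is the piecewise condition preserved by $\alpha$ that \cite{err:fullext} requires. Once this matching is confirmed, the two cases assemble into the stated equivalence, completing the classification of graph $C^{*}$-algebras with exactly one non-trivial ideal.
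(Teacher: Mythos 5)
Your proposal is correct and follows essentially the same route as the paper: the paper likewise observes that the non-unital case is exactly Theorem~4.9 of \cite{err:fullext} and that the unital case (both algebras unital, forced by the scale condition, hence both graphs with finitely many vertices) is handled by Theorem~\ref{t:class1ideal}. Your additional checks --- the forward direction by functoriality and the matching of the invariant of \cite{err:fullext} with scale-preserving isomorphisms of $K_{X_{2}}^{+}$ --- are sound elaborations of what the paper leaves implicit.
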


\begin{proof}
The only case that is not covered by Theorem~4.9 of \cite{err:fullext} is the case that $C^{*} ( E_{i} )$ is unital.  The unital case follows from Theorem~\ref{t:class1ideal} because of Theorem \ref{t:metathm}.
\end{proof}

\subsection{Classification of graph $C^{*}$-algebras with more than one ideal}
For a tight $C^{*}$-algebra $\mathfrak{A}$ over $X_{n}$, the finite and infinite simple sub-quotients of $\mathfrak{A}$ are \emph{separated} if there exists $U \in \mathbb{O} ( X_{n} )$ such that either 
\begin{itemize}
\item[(1)] $\mathfrak{A} (U)$ is an AF-algebra and $\mathfrak{A} ( X_{n} \setminus U) \otimes \mathcal{O}_{\infty} \cong \mathfrak{A}( X_{n} \setminus U )$ or

\item[(2)] $\mathfrak{A} (X_{n} \setminus U)$ is an AF-algebra and $\mathfrak{A} ( U) \otimes \mathcal{O}_{\infty} \cong \mathfrak{A}( U )$.
\end{itemize}  In \cite{segrer:ccfis}, the authors proved that if $\mathfrak{A}_{1}$ and $\mathfrak{A}_{2}$ are graph $C^{*}$-algebras that are tight $C^{*}$-algebras over $X_{n}$ such that the finite and infinite simple sub-quotients are separated, then $\mathfrak{A}_{1} \otimes \K \cong \mathfrak{A}_{2} \otimes \K$ if and only if $K_{X_{n}}^{+} ( \mathfrak{A}_{1} ) \cong K_{X_{n}}^{+} ( \mathfrak{A}_{2} )$.  
We will show in this section that under mild $K$-theoretical conditions, we may remove the separated condition for the case $n = 3$.

\begin{lemma}\label{l:fullmixed}
Let $E$ be a graph such that $C^{*} ( E )$ is a tight $C^{*}$-algebra over $X_{n}$.
\begin{itemize}
\item[(i)] If $C^{*} ( E )[n]$ and $C^{*} ( E ) [ 1 ]$ are purely infinite and $C^{*} ( E ) [ 2, n-1]$ is an AF-algebra, then 
\begin{align*}
\mathfrak{e}_{1} : 0 \to C^{*} ( E ) [ 2, n ] \otimes \K \to C^{*} ( E ) \otimes \K \to C^{*} ( E ) [ 1 ] \otimes \K \to 0
\end{align*}
is a full extension.

\item[(ii)] If $C^{*} (E)[ k, n ]$ and $C^{*} (E)[ 1, k-2 ]$ are AF-algebras and $C^{*} ( E ) [k-1]$ is purely infinite, then 
\begin{align*}
\mathfrak{e}_{2} : 0 \to C^{*} ( E ) [ k , n ] \otimes \K \to C^{*} ( E ) \otimes \K \to C^{*} ( E ) [ 1, k-1 ] \otimes \K \to 0
\end{align*}
is a full extension.
\end{itemize}
\end{lemma}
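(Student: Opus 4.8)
The plan is to show that the Busby invariant of each extension carries every nonzero element of the quotient to a norm-full element of the corona of the ideal; this is exactly fullness of the extension. Two reductions organize the argument. First, by Proposition~1.6 of \cite{ERRshift} fullness is preserved under tensoring by $\K$, so it suffices to treat the unstabilized extensions $0\to C^{*}(E)[2,n]\to C^{*}(E)\to C^{*}(E)[1]\to 0$ and $0\to C^{*}(E)[k,n]\to C^{*}(E)\to C^{*}(E)[1,k-1]\to 0$. Second, every open set $[a,n]$ is dense in $X_{n}$, since the smallest closed (down-)set containing it is all of $X_{n}$; hence each ideal $C^{*}(E)[a,n]$ is essential in $C^{*}(E)$ and the corresponding Busby invariant is injective. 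I would also record that each single-point sub-quotient $C^{*}(E)[j]$ is simple, and that the relevant stable ideals are assembled from AF and Kirchberg graph-algebra blocks whose corona factorization property is furnished by Lemma~3.10 and Theorem~3.8 of \cite{segrer:ccfis}.

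For part~(i) the quotient $C^{*}(E)[1]$ is simple and purely infinite, hence properly infinite, while the ideal $C^{*}(E)[2,n]\otimes\K$ is stable. I would therefore aim to conclude fullness from the properly-infinite-quotient criterion of Corollary~3.24 of \cite{err:fullext}, feeding it the essentiality established above together with stability and the corona factorization property of the ideal. The one point requiring care is that $C^{*}(E)[2,n]$ is not simple, so I would either verify that its corona factorization property persists through the extension $0\to C^{*}(E)[n]\to C^{*}(E)[2,n]\to C^{*}(E)[2,n-1]\to 0$ of a Kirchberg algebra by an AF-algebra, or, failing a single clean application, descend the ideal chain $C^{*}(E)[n]\unlhd\cdots\unlhd C^{*}(E)[2,n]$ and check fullness against each simple sub-quotient, using the properly infinite criterion at the layer $C^{*}(E)[n]$ and the cancellation criterion at the AF layers $C^{*}(E)[2,n-1]$.

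For part~(ii) the ideal $C^{*}(E)[k,n]$ is an AF-algebra---so its corona is not simple---and the quotient $C^{*}(E)[1,k-1]$ is genuinely mixed, carrying a purely infinite simple ideal $C^{*}(E)[k-1]$ beneath an AF quotient $C^{*}(E)[1,k-2]$; consequently no single quotient hypothesis of Corollary~3.24 of \cite{err:fullext} applies to $\mathfrak{e}_{2}$ as it stands. Here I would factor $\mathfrak{e}_{2}$ through the intermediate ideal $C^{*}(E)[k-1,n]$, splitting it into the extension $0\to C^{*}(E)[k,n]\to C^{*}(E)[k-1,n]\to C^{*}(E)[k-1]\to 0$, whose purely infinite simple quotient over the AF ideal is full by the properly-infinite criterion, and the extension $0\to C^{*}(E)[k-1,n]\to C^{*}(E)\to C^{*}(E)[1,k-2]\to 0$, whose AF quotient is full by the cancellation criterion; the corona factorization property of $C^{*}(E)[k,n]\otimes\K$ from \cite{segrer:ccfis} is what I would use to assemble the two layered conclusions into fullness of $\mathfrak{e}_{2}$.

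The main obstacle in both parts is the non-simplicity of the ideal (and, in part~(ii), of the quotient), which prevents the clean over-$X_{2}$ arguments---corona of a stable Kirchberg algebra being simple, or a single application of the properly infinite or cancellative quotient criterion---from applying verbatim. The real work is to control the ideal lattice of the corona of the non-simple ideal, to match its maximal ideals with the simple sub-quotients $C^{*}(E)[j]$, and to verify that the purely infinite layer keeps the Busby image out of every such ideal; once fullness is proved before stabilization, Proposition~1.6 of \cite{ERRshift} returns the stated extensions $\mathfrak{e}_{1}$ and $\mathfrak{e}_{2}$.
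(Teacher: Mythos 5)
There is a genuine gap, and it sits exactly where you locate ``the real work'' without doing it. Fullness of an extension whose ideal has a non-simple corona cannot be extracted from essentiality plus the corona factorization property plus proper infiniteness of the quotient: essentiality only makes the Busby invariant injective, and when the ideal is a stable AF algebra (or the genuinely non-simple $C^{*}(E)[2,n]\otimes\K$) the corona has proper ideals, so injectivity says nothing about the image being norm-full. Corollary~3.24 of \cite{err:fullext}, in the way this paper uses it, is a \emph{stabilization} statement --- an already-full extension with properly infinite or cancellative quotient remains full after tensoring with $\K$ --- not a criterion that manufactures fullness from essentiality, so your ``properly-infinite-quotient criterion'' and ``cancellation criterion'' do not exist in the form you need. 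The paper instead imports a graph-specific input, Proposition~3.10 of \cite{semt_classgraphalg}, which says that for a graph $C^{*}$-algebra the stabilized extension determined by its largest AF ideal is full; this is what handles the mixed layers ($0\to C^{*}(E)[2,n-1]\otimes\K\to C^{*}(E)[1,n-1]\otimes\K\to C^{*}(E)[1]\otimes\K\to 0$ in (i), and $0\to C^{*}(E)[k,n]\otimes\K\to C^{*}(E)[k-1,n]\otimes\K\to C^{*}(E)[k-1]\otimes\K\to 0$ in (ii)), and your outline has no substitute for it.

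Your instinct to factor through an intermediate ideal is right, and in (ii) your split through $C^{*}(E)[k-1,n]$ coincides with the paper's. But the assembly steps also need justification you do not supply. The paper composes via Proposition~3.2 of \cite{ERRlinear} in (i) --- fullness of $0\to I\to J\to J/I\to 0$ together with fullness of $0\to J/I\to A/I\to A/J\to 0$ yields fullness of $0\to J\to A\to A/J\to 0$ --- and via Proposition~5.4 of \cite{segrer:ccfis} in (ii); note that in (i) this lets the paper avoid the corona of the non-simple ideal $C^{*}(E)[2,n]\otimes\K$ entirely by working in the quotient $C^{*}(E)[1,n-1]$ first, whereas your plan is to analyze that corona's ideal lattice head-on, which is a much harder route. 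Asserting that the corona factorization property ``assembles the two layered conclusions'' is not an argument; it is precisely the content of the cited composition results. (Your essentiality observation for the ideals $C^{*}(E)[a,n]$ and the reduction via Proposition~1.6 of \cite{ERRshift} are fine, but they are the easy part.)
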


\begin{proof}
Suppose $C^{*} ( E )[n]$ and $C^{*} ( E ) [ 1 ]$ are purely infinite and $C^{*} ( E ) [ 2, n-1]$ is an AF-algebra.  Note that $C^{*}( E ) [ 1, n - 1 ] / C^{*} ( E ) [ 2, n-1 ] \cong C^{*} ( E ) [ 1 ]$ and $C^{*} ( E ) [ 2, n-1 ]$ is the largest ideal of $C^{*} ( E )[1,n-1]$ which is an AF-algebra.  Since $C^{*} ( E ) [1, n-1]$ is isomorphic to a graph $C^{*}$-algebra, by Proposition~3.10 of \cite{semt_classgraphalg},
\begin{align*}
0 \to C^{*} ( E ) [2, n-1 ] \otimes \K \to C^{*} ( E ) [ 1, n-1 ] \otimes \K \to C^{*} ( E )[1] \otimes \K \to 0
\end{align*}
is a full extension.  Since $C^{*} ( E ) [ n ] \otimes \K$ is a purely infinite simple $C^{*}$-algebra, we have that 
\begin{align*}
0 \to C^{*} ( E ) [ n ] \otimes \K \to C^{*} ( E ) [ 2,n] \otimes \K \to C^{*} ( E ) [ 2, n-1 ] \otimes \K \to 0
\end{align*}
is a full extension.  Hence, by Proposition~3.2 of \cite{ERRlinear}, $\mathfrak{e}_{1}$ is a full extension.

Suppose $C^{*} (E)[ k, n ]$ and $C^{*} (E)[ 1, k-2 ]$ are AF-algebras and $C^{*} ( E ) [k-1]$ is purely infinite.  Note that $C^{*} ( E )[k,n]$ is the largest ideal of $C^{*} ( E ) [ k -1, n ]$ such that $C^{*} ( E ) [ k  , n ]$ is an AF-algebra and $C^{*} (E) [ k-1,n ] / C^{*} ( E ) [k, n ] \cong C^{*} ( E ) [ k-1]$ is purely infinite.  Since $C^{*} ( E ) [ k - 1, n ] \otimes \K$ is isomorphic to a graph $C^{*}$-algebra, by Proposition~3.10 of \cite{semt_classgraphalg}, 
\begin{align*}
0 \to C^{*} ( E )[k, n ] \otimes \K \to C^{*} ( E ) [ k-1, n ] \otimes \K \to C^{*} ( E ) [ k-1 ] \otimes \K \to 0
\end{align*}
is a full extension.  By Proposition~5.4 of \cite{segrer:ccfis}, $\mathfrak{e}_{2}$ is a full extension.
\end{proof}

\begin{theor}\label{t:graphmixed1}
Let $E_{1}$ and $E_{2}$ be graphs such that $C^{*} ( E_{i} )$ is a tight $C^{*}$-algebra over $X_{n}$.  Suppose
\begin{itemize}
\item[(i)] $C^{*} ( E_{i} )[ n ]$ and $C^{*} ( E_{i} ) [ 1 ]$ are purely infinite;

\item[(ii)] $C^{*} ( E_{i} ) [ 2, n-1]$ is an AF-algebra; and

\item[(iii)] $\kk^{1} ( C^{*} ( E_{1} )[1] ,  C^{*} ( E_{2} ) [2,n] ) = \kl^{1}( C^{*} ( E_{1} )[1] ,  C^{*} ( E_{2} ) [2,n] )$.
\end{itemize}
Then $C^{*} ( E_{1} ) \otimes \K \cong C^{*} ( E_{2} ) \otimes \K$ if and only if $K_{X_{n}}^{+} ( C^{*} (E_{1}) \otimes \K) \cong K_{X_{n} }^{+} ( C^{*} ( E_{2} ) \otimes \K )$.
\end{theor}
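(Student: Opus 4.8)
My plan is to dispatch the \emph{only if} direction immediately and put all the work into \emph{if}. If $C^{*}(E_{1}) \otimes \K \cong C^{*}(E_{2}) \otimes \K$, then since both algebras are tight over $X_{n}$ this isomorphism is automatically $X_{n}$-equivariant by Remark~\ref{r:X2hom}, so functoriality of ideal-related ordered $K$-theory yields the desired $K_{X_{n}}^{+}$-isomorphism. For the converse I would first set the stage: each simple subquotient of $C^{*}(E_{i})$ is a graph algebra and hence lies in $\mathcal{N}$, so by Corollary~4.13 of \cite{rmrn:bootstrap} we have $C^{*}(E_{i}) \in \mathcal{B}(X_{n})$. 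Because $K_{X_{n}}$ is unchanged under stabilization, the given order isomorphism $\alpha$ may be read as an isomorphism $K_{X_{n}}^{+}(C^{*}(E_{1})) \to K_{X_{n}}^{+}(C^{*}(E_{2}))$ and lifted, via Corollary~\ref{c:uctX}, to an invertible $x \in \kk(X_{n}; C^{*}(E_{1}), C^{*}(E_{2}))$ with $\Gamma(x)_{Y}$ an order isomorphism for every $Y \in \mathbb{LC}(X_{n})$; I then pass to the stabilizations by conjugating, setting $y = \kk(X_{n}; \id \otimes e_{11})^{-1} \times x \times \kk(X_{n}; \id \otimes e_{11})$ as in Theorem~\ref{t:classmixed1}. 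The structural inputs I record are that Lemma~\ref{l:fullmixed}(i) makes $\mathfrak{e}_{1} : 0 \to C^{*}(E_{i})[2,n] \otimes \K \to C^{*}(E_{i}) \otimes \K \to C^{*}(E_{i})[1] \otimes \K \to 0$ a full extension, and that hypotheses (i)--(ii) exhibit $C^{*}(E_{i})[2,n] \otimes \K$ as a stable, real-rank-zero extension of the AF-algebra $C^{*}(E_{i})[2,n-1]$ by the Kirchberg algebra $C^{*}(E_{i})[n]$ with the latter essential, i.e. exactly the data classified in Subsection~4.1.

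The existence step produces $X_{n}$-equivariant homomorphisms $\ftn{\phi}{C^{*}(E_{1}) \otimes \K}{C^{*}(E_{2}) \otimes \K}$ and $\psi$ in the reverse direction, with injective simple-subquotient maps, realizing $y$ and $y^{-1}$ in $\kk(X_{n})$. I would build $\phi$ in two layers, mirroring the proof of Theorem~\ref{t:exist}. On the ideal I invoke the AF-by-PI existence content (Theorem~\ref{t:exist}(i), feeding Theorem~\ref{t:classmixed1}) to realize the restriction $r_{X_{n}}^{[2,n]}(y)$ by an $X_{n}$-equivariant homomorphism into the ideal. To lift across the top Kirchberg quotient I use that $C^{*}(E_{2}) \otimes \K$ is properly infinite (via \cite{err:fullext}) together with Kirchberg's realization theorem (Theorem~8.3.3 of \cite{classentropy}) to represent the remaining $\kk(X_{n})$-class, extracted from the six-term exactness of $\kk(X_{n}; -, -)$ (Theorem~3.6 of \cite{rmrn:bootstrap}), by a homomorphism; I then assemble the two pieces by a Cuntz sum in $\multialg{C^{*}(E_{2}) \otimes \K}$, so that $\kk(X_{n}; \phi) = y$.

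For uniqueness I must show that two such homomorphisms inducing the same $\kk(X_{n})$-class are approximately unitarily equivalent. On the ideal this is Theorem~\ref{t:uniq1}, and on the Kirchberg quotient it is Kirchberg--Phillips. The sensitive point is the gluing datum relating quotient to ideal, a Busby-type class living in $\kk^{1}(C^{*}(E_{1})[1], C^{*}(E_{2})[2,n])$: approximate unitary equivalence detects this class only modulo $\operatorname{Pext}$, that is, at the level of $\kl^{1}$. Hence to upgrade "equal $\kk(X_{n})$-classes" to "approximately unitarily equivalent" I need hypothesis (iii), $\kk^{1} = \kl^{1}$, which kills precisely this ambiguity. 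With existence and uniqueness in place, $\psi \circ \phi$ and $\phi \circ \psi$ induce the identity $\kk(X_{n})$-classes and are therefore approximately unitarily equivalent to the respective identities; a standard Elliott approximate-intertwining argument then upgrades $\phi$ to an $X_{n}$-equivariant $*$-isomorphism $C^{*}(E_{1}) \otimes \K \cong C^{*}(E_{2}) \otimes \K$, which by Remark~\ref{r:X2hom} is a genuine isomorphism inducing $\alpha$.

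The main obstacle is the uniqueness step across the three-layer PI--AF--PI structure. Existence, Theorem~\ref{t:uniq1}, and Kirchberg--Phillips handle the AF-by-PI ideal and the Kirchberg quotient in isolation, but assembling them forces one to control the extension class connecting quotient and ideal, and it is exactly here that condition (iii) is indispensable: without $\kk^{1} = \kl^{1}$ the $\kl$-level invariant visible to approximate unitary equivalence would fail to pin down the prescribed $\kk(X_{n})$-class, and the intertwining could not be closed up.
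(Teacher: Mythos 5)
Your reduction of the ``only if'' direction and your setup (lifting $\alpha$ to an invertible $x\in\kk(X_{n};-,-)$ via Corollary~\ref{c:uctX}, and recording via Lemma~\ref{l:fullmixed}(i) that the relevant extension is full) match the paper. But the core of your argument --- an existence/uniqueness/approximate-intertwining scheme carried out at the level of the full three-layer algebra over $X_{n}$ --- rests on a uniqueness theorem that the paper does not contain and that you do not prove. Theorem~\ref{t:uniq1} gives approximate unitary equivalence only for $X_{2}$-equivariant maps between algebras whose \emph{ideal} is Kirchberg and whose \emph{quotient} is AF; its proof depends essentially on the quotient being AF (quasidiagonality of the extension via $K_{1}$ of the quotient vanishing, finite-dimensional approximations of the quotient, Lin's separable BDF theorem on corners of the ideal). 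In your situation the top quotient $C^{*}(E_{i})[1]$ is a Kirchberg algebra, so none of that machinery applies, and ``Theorem~\ref{t:uniq1} on the ideal plus Kirchberg--Phillips on the quotient'' does not assemble into approximate unitary equivalence on the whole algebra: gluing the two sequences of unitaries across the extension is precisely the hard content of any such uniqueness theorem, and asserting that hypothesis~(iii) ``kills the ambiguity'' is not an argument. The same remark applies, less severely, to your existence step, since Theorem~\ref{t:exist} is only stated and proved for the two specific $X_{2}$ configurations.

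The paper takes a different and shorter route that avoids any three-layer uniqueness statement. It produces an isomorphism $\phi_{0}$ of the ideals $C^{*}(E_{i})[2,n]\otimes\K$ (these are PI-by-AF, so Theorem~\ref{t:classmixed1} applies, but note it only controls $\phi_{0}$ at the level of $\kl$, not $\kk$) and an isomorphism $\phi_{2}$ of the Kirchberg quotients with $\kk(\phi_{2})=r_{X_{n}}^{[1]}(x)$ via Kirchberg--Phillips. It then verifies the Busby-invariant compatibility $\kk(\phi_{2})\times[\tau_{\mathfrak{e}_{2}}]=[\tau_{\mathfrak{e}_{1}}]\times\kk(\phi_{0})$ and feeds the pair $(\phi_{0},\phi_{2})$ into Lemma~4.5 of \cite{segrer:ccfis}, which directly yields an isomorphism of the extensions. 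Hypothesis~(iii) enters exactly here: since $\phi_{0}$ is controlled only in $\kl$, the compatibility identity is a priori known only in $\kl^{1}(C^{*}(E_{1})[1]\otimes\K, C^{*}(E_{2})[2,n]\otimes\K)$, and the assumption $\kk^{1}=\kl^{1}$ upgrades it to the $\kk^{1}$ identity that Lemma~4.5 requires. So while your instinct about where condition~(iii) lives (the $\operatorname{Pext}$ ambiguity in the gluing datum) is correct, you have attached it to a uniqueness step that is not available; to make your route work you would have to prove a genuinely new three-layer uniqueness theorem, whereas the paper's argument needs none.
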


\begin{proof}
Let $\mathfrak{e}_{i}$ be the extension 
\begin{align*}
0 \to C^{*} ( E_{i} ) [ 2, n] \otimes \K \to C^{*} ( E_{i} ) \otimes \K \to C^{*} ( E_{i} )[1] \otimes \K \to 0.
\end{align*} 
By Lemma~\ref{l:fullmixed}(i), $\mathfrak{e}_{i}$ is a full extension.  Suppose $\ftn{ \alpha }{ K_{X_{n}}^{+} ( C^{*} ( E_{1} ) \otimes \K ) }{ K_{X_{n}}^{+} ( C^{*} ( E_{2} )  \otimes \K ) }$.  Lift $\alpha$ to an invertible element $x \in \kk ( X_{n} ; C^{*} ( E_{1} ) \otimes \K , C^{*} ( E_{2} ) \otimes \K )$.  Note that $r_{X_{n}}^{[2,n]} (x)$ is invertible in $\kk( [2,n] ; C^{*} ( E_{1} ) [ 2,n] \otimes \K , C^{*} ( E_{2} )[2, n ] \otimes \K )$ and $r_{X_{n}}^{[1]} (x)$ is invertible in $\kk ( C^{*} ( E_{1} )[1] \otimes \K , C^{*} ( E_{2} )[1] \otimes \K )$.  By Theorem~\ref{t:classmixed1}, there exists an isomorphism $\ftn{ \phi_{0} }{ C^{*} ( E_{1} ) [ 2, n ] \otimes \K }{ C^{*} ( E_{2} ) [ 2, n ] \otimes \K }$ such that $\kl ( \phi_{0} ) = z$, where $z$ is the invertible element of $\kl( C^{*} ( E_{1} ) [ 2,n] \otimes \K , C^{*} ( E_{2} )[2, n ] \otimes \K )$ induced by $r_{X_{n}}^{[2,n]} (x)$.  By the Kirchberg-Phillips classification (\cite{kirchpure} and \cite{phillipspureinf}), there exists an isomorphism $\ftn{ \phi_{2} }{ C^{*} ( E_{1} ) [1 ] \otimes \K }{ C^{*} ( E_{2} ) [1] \otimes \K }$  such that $\kk ( \phi_{2} ) = r_{X_{n}}^{[1]} (x)$.  

Consider $C^{*} ( E_{i} )$ as a $C^{*}$-algebra over $X_{2}$ by setting $C^{*} ( E_{i} ) [ 2 ] = C^{*} ( E_{i} ) [ 2,n]$ and $C^{*} ( E_{i} ) [ 1,2 ] = C^{*} ( E_{i} )$.  Let $y$ be the invertible element in $\kk ( X_{2} , C^{*} ( E_{1} ) , C^{*} ( E_{2} ) )$ induced by $x$.  Note that $r_{X_{2}}^{[1]} (y) = r_{X_{n}}^{[1]} (x) = \kk ( \phi_{2} )$ and $\kl ( r_{X_{2}}^{[2]} (y) ) = z = \kl ( \phi_{0} )$ in $\kl ( C^{*} ( E_{1} )[2,n] , C^{*} ( E_{2} )[2,n] )$.  By Theorem~3.7 of \cite{segrer:ccfis}, 
\begin{align*}
r_{X_{2}}^{[1]} (y) \times [ \tau_{ \mathfrak{e}_{2} } ] = [ \tau_{ \mathfrak{e}_{1} } ] \times r_{X_{2}}^{[2]} (y) 
\end{align*}    
in $\kk^{1} ( C^{*} ( E_{1} ) [1]  \otimes \K , C^{*} ( E_{2} ) [ 2,n] \otimes \K )$, where $\mathfrak{e}_{i}$ is the extension
\begin{align*}
0 \to C^{*} ( E_{i} ) [2,n ] \otimes \K \to C^{*} ( E_{i} ) \otimes \K \to C^{*} ( E_{i} ) [ 1 ] \otimes \K \to 0.
\end{align*}
Thus,
\begin{align*}
\kl ( \phi_{2} ) \times  [ \tau_{ \mathfrak{e}_{2} } ] = [ \tau_{ \mathfrak{e}_{1} } ] \times \kl ( \phi_{0} )
\end{align*}   
in $\kl^{1} ( C^{*} ( E_{1} ) [1]  \otimes \K , C^{*} ( E_{2} ) [ 2,n]  \otimes \K )$.  Since $\kl^{1}( C^{*} ( E_{1} )[1] \otimes \K,  C^{*} ( E_{2} ) [2,n]  \otimes \K) = \kk^{1} ( C^{*} ( E_{1} )[1] \otimes \K,  C^{*} ( E_{2} ) [2,n]  \otimes \K)$, 
\begin{align*}
\kk ( \phi_{2} ) \times  [ \tau_{ \mathfrak{e}_{2} } ] = [ \tau_{ \mathfrak{e}_{1} } ] \times \kk ( \phi_{0} )
\end{align*}   
in $\kk^{1} ( C^{*} ( E_{1} ) [1] \otimes \K  , C^{*} ( E_{2} ) [ 2,n]  \otimes \K )$.  By Lemma~4.5 of \cite{segrer:ccfis}, $C^{*} ( E_{1} ) \otimes \K \cong C^{*} ( E_{2} ) \otimes \K$.
\end{proof}

\begin{theor}\label{t:graphmixed2}
Let $E_{1}$ and $E_{2}$ be graphs such that $C^{*} ( E_{i} )$ is a tight $C^{*}$-algebra over $X_{n}$.  Suppose
\begin{itemize}
\item[(i)] $C^{*} ( E_{i} )[ k,n]$ and $C^{*} ( E_{i} ) [ 1, k-2 ]$ are AF-algebras;

\item[(ii)] $C^{*} ( E_{i} ) [ k-1]$ is purely infinite; and

\item[(iii)] $\kk^{1} ( C^{*} ( E_{1} )[1, k-1] ,  C^{*} ( E_{2} ) [k,n] ) = \kl^{1}( C^{*} ( E_{1} )[1,k-1] ,  C^{*} ( E_{2} ) [k,n] )$.
\end{itemize}
Then $C^{*} ( E_{1} ) \otimes \K \cong C^{*} ( E_{2} ) \otimes \K$ if and only if $K_{X_{n}}^{+} ( C^{*} (E_{1}) \otimes \K ) \cong K_{X_{n} }^{+} ( C^{*} ( E_{2} ) \otimes \K)$.  
\end{theor}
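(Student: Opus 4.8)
The plan is to follow the proof of Theorem~\ref{t:graphmixed1} almost verbatim, but with the roles of ideal and quotient exchanged: here the ideal $C^{*}(E_{i})[k,n]$ is the purely AF piece while the quotient $C^{*}(E_{i})[1,k-1]$ is the mixed piece. The reverse implication is immediate from functoriality of $K_{X_{n}}^{+}(-)$, so I concentrate on producing a stable isomorphism from a given isomorphism $\ftn{\alpha}{K_{X_{n}}^{+}(C^{*}(E_{1})\otimes\K)}{K_{X_{n}}^{+}(C^{*}(E_{2})\otimes\K)}$. Writing $\mathfrak{e}_{i}$ for the extension $0\to C^{*}(E_{i})[k,n]\otimes\K\to C^{*}(E_{i})\otimes\K\to C^{*}(E_{i})[1,k-1]\otimes\K\to 0$, Lemma~\ref{l:fullmixed}(ii) shows $\mathfrak{e}_{i}$ is full. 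Since each $C^{*}(E_{i})\otimes\K$ lies in $\mathcal{B}(X_{n})$, Corollary~\ref{c:uctX} lifts $\alpha$ to an invertible $x\in\kk(X_{n};C^{*}(E_{1})\otimes\K,C^{*}(E_{2})\otimes\K)$; let $y$ be the invertible element of $\kk(X_{2};C^{*}(E_{1})\otimes\K,C^{*}(E_{2})\otimes\K)$ induced by $x$ when $C^{*}(E_{i})$ is regarded over $X_{2}$ with ideal $[k,n]$ and quotient $[1,k-1]$.

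I then classify the two pieces separately. As $C^{*}(E_{i})[k,n]$ is AF, the restriction $r_{X_{n}}^{[k,n]}(x)$ has order-isomorphic $K_{0}$-part, so Elliott's theorem \cite{af} gives an isomorphism $\ftn{\phi_{0}}{C^{*}(E_{1})[k,n]\otimes\K}{C^{*}(E_{2})[k,n]\otimes\K}$ whose class $\kk(\phi_{0})$ is the element induced by $r_{X_{n}}^{[k,n]}(x)$ (for AF algebras $\kk=\kl$). For the quotient, note that over $X_{2}$ the algebra $C^{*}(E_{i})[1,k-1]$ is an extension of the AF-algebra $C^{*}(E_{i})[1,k-2]$ by the Kirchberg algebra $C^{*}(E_{i})[k-1]$; because the ideal lattice of the tight algebra $C^{*}(E_{i})$ is the chain $\mathbb{O}(X_{n})$, the simple ideal $C^{*}(E_{i})[k-1]$ is essential in $C^{*}(E_{i})[1,k-1]$, and since $C^{*}(E_{i})$ has finitely many ideals $E_{i}$ satisfies Condition~(K), whence $C^{*}(E_{i})$, and therefore its quotient $C^{*}(E_{i})[1,k-1]$, has real rank zero. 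Thus Theorem~\ref{t:classmixed1}, applied to the invertible element induced by $r_{X_{n}}^{[1,k-1]}(x)$, produces an isomorphism $\ftn{\phi_{2}}{C^{*}(E_{1})[1,k-1]\otimes\K}{C^{*}(E_{2})[1,k-1]\otimes\K}$ with $\kl(\phi_{2})$ the $\kl$-class induced by $r_{X_{n}}^{[1,k-1]}(x)$.

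Finally I glue the two isomorphisms using the Busby invariants. By Theorem~3.7 of \cite{segrer:ccfis} one has $r_{X_{2}}^{[1]}(y)\times[\tau_{\mathfrak{e}_{2}}]=[\tau_{\mathfrak{e}_{1}}]\times r_{X_{2}}^{[2]}(y)$ in $\kk^{1}(C^{*}(E_{1})[1,k-1]\otimes\K,C^{*}(E_{2})[k,n]\otimes\K)$, where $r_{X_{2}}^{[2]}(y)=r_{X_{n}}^{[k,n]}(x)$ and $r_{X_{2}}^{[1]}(y)=r_{X_{n}}^{[1,k-1]}(x)$ under the collapse to $X_{2}$. Projecting this identity to $\kl^{1}$ and using $\kl(\phi_{0})=\kl(r_{X_{2}}^{[2]}(y))$ and $\kl(\phi_{2})=\kl(r_{X_{2}}^{[1]}(y))$ yields $\kl(\phi_{2})\times[\tau_{\mathfrak{e}_{2}}]=[\tau_{\mathfrak{e}_{1}}]\times\kl(\phi_{0})$ in $\kl^{1}(C^{*}(E_{1})[1,k-1]\otimes\K,C^{*}(E_{2})[k,n]\otimes\K)$. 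Hypothesis (iii) states precisely that this $\kl^{1}$-group coincides with the corresponding $\kk^{1}$-group (stability under $\otimes\K$ preserves the equality), so the same relation holds in $\kk^{1}$ with $\kk(\phi_{2})$ in place of $\kl(\phi_{2})$; Lemma~4.5 of \cite{segrer:ccfis} then assembles $\phi_{0}$ and $\phi_{2}$ into an isomorphism $C^{*}(E_{1})\otimes\K\cong C^{*}(E_{2})\otimes\K$.

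I expect the main obstacle to be the verification that the mixed quotient $C^{*}(E_{i})[1,k-1]$ meets every hypothesis of Theorem~\ref{t:classmixed1}---real rank zero and essentiality of the purely infinite ideal $C^{*}(E_{i})[k-1]$ in particular---together with the bookkeeping needed to confirm that the Busby relation lives in the single group $\kk^{1}(C^{*}(E_{1})[1,k-1]\otimes\K,C^{*}(E_{2})[k,n]\otimes\K)$ for which hypothesis (iii) is assumed, so that the passage from $\kl^{1}$ back to $\kk^{1}$ is legitimate. The genuinely new analytic content is entirely imported from Theorem~\ref{t:classmixed1} and from Theorem~3.7 and Lemma~4.5 of \cite{segrer:ccfis}, so the argument is structurally parallel to Theorem~\ref{t:graphmixed1} once these hypothesis checks are in place.
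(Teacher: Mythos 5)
Your proposal is correct and follows essentially the same route as the paper's proof: lift $\alpha$ to an invertible $\kk(X_{n})$-element, use Elliott on the AF ideal $C^{*}(E_{i})[k,n]$ and Theorem~\ref{t:classmixed1} on the mixed quotient $C^{*}(E_{i})[1,k-1]$, then glue via the Busby-invariant relation from Theorem~3.7 of \cite{segrer:ccfis}, hypothesis (iii), and Lemma~4.5 of \cite{segrer:ccfis}. The hypothesis checks you flag (real rank zero, essentiality of $C^{*}(E_{i})[k-1]$, fullness via Lemma~\ref{l:fullmixed}(ii)) are exactly the points the paper relies on, and your justifications for them are sound.
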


\begin{proof}
Let $\mathfrak{e}_{i}$ be the extension $0 \to C^{*} ( E_{i} ) [ k, n ] \otimes \K \to C^{*} ( E_{i} ) \otimes \K \to C^{*} ( E_{i} ) [ 1, k-1 ] \otimes \K \to 0$.  By Lemma~\ref{l:fullmixed}(ii), $\mathfrak{e}_{i}$ is a full extension.  Suppose $\ftn{ \alpha }{ K_{X_{n}}^{+} ( C^{*} ( E_{1} ) \otimes \K) }{ K_{X_{n}}^{+} ( C^{*} ( E_{2} ) \otimes \K ) }$.  Lift $\alpha$ to an invertible element $x \in \kk ( X_{n} ; C^{*} ( E_{1} ) \otimes \K, C^{*} ( E_{2} ) \otimes \K )$.  Note that $r_{X_{n}}^{ [k,n] } (x)$ is invertible in $\kk ( [k, n]; C^{*} ( E_{1} ) [ k,n]  \otimes \K, C^{*} ( E_{2} ) [ k , n ] \otimes \K  )$ and $r_{X_{n}}^{[1,k-1]} (x)$ is invertible in $\kk (C^{*} ( E_{1} ) [ 1, k-1 ] , C^{*} ( E_{2} ) [ 1, k-1 ] )$.  By Theorem~\ref{t:classmixed1}, there exists an isomorphism $\ftn{ \phi_{2} }{ C^{*} ( E_{1} ) [ 1, k-1 ] \otimes \K }{ C^{*} ( E_{2 } )[ 1, k-1] \otimes \K }$ such that $\kl ( \phi_{2} ) = z_{2}$, where $z_{2}$ is the invertible element in $\kl (C^{*} ( E_{1} ) [ 1, k-1 ] , C^{*} ( E_{2} ) [ 1, k-1 ] )$ induced by $r_{X_{n}}^{[1,k-1]} (x)$.  By Elliott's classification \cite{af}, there exists an isomorphism $\ftn{ \phi_{0} }{ C^{*} ( E_{1} ) [ k, n ] \otimes \K }{ C^{*} ( E_{2} ) [ k , n ] \otimes \K }$ such that $\kk ( \phi_{0} ) = z_{0}$, where $z_{0}$ is the invertible element in $\kk ( C^{*} ( E_{1} ) [ k,n]  \otimes \K, C^{*} ( E_{2} ) [ k , n ] \otimes \K  )$ induced by $r_{X_{n}}^{ [k,n] } (x)$.

Consider $C^{*} ( E_{i} )$ as a $C^{*}$-algebra over $X_{2}$ by setting $C^{*} ( E_{i} ) [ 2 ] = C^{*} ( E_{i} ) [ k,n]$ and $C^{*} ( E_{i} ) [ 1,2 ] = C^{*} ( E_{i} )$.  Let $y$ be the invertible element in $\kk ( X_{2} , C^{*} ( E_{1} ) , C^{*} ( E_{2} ) )$ induced by $x$.  Note that $\kl ( r_{X_{2}}^{[1]} (y) ) = z_{2} = \kl ( \phi_{2} )$ and $r_{X_{2}}^{[2]} (y)  = z_{0} = \kk ( \phi_{0} )$.  By Theorem~3.7 of \cite{segrer:ccfis}, 
\begin{align*}
r_{X_{2}}^{[1]} (y) \times [ \tau_{ \mathfrak{e}_{2} } ] = [ \tau_{ \mathfrak{e}_{1} } ] \times r_{X_{2}}^{[2]} (y) 
\end{align*}    
in $\kk^{1} ( C^{*} ( E_{1} ) [1,k-1] \otimes \K , C^{*} ( E_{2} ) [ k,n] \otimes \K )$, where $\mathfrak{e}_{i}$ is the extension
\begin{align*}
0 \to C^{*} ( E_{i} ) [ k,n] \otimes \K \to C^{*} ( E_{i} ) \otimes \K \to C^{*} ( E_{i} ) [ 1,k-1] \otimes \K \to 0.
\end{align*}
Thus, 
\begin{align*}
\kl ( \phi_{2} ) \times  [ \tau_{ \mathfrak{e}_{2} } ] = [ \tau_{ \mathfrak{e}_{1} } ] \times \kl ( \phi_{0} )
\end{align*}   
in $\kl^{1} ( C^{*} ( E_{1} ) [1,k-1] \otimes \K , C^{*} ( E_{2} ) [ k,n] \otimes \K )$.  Since $\kl^{1}( C^{*} ( E_{1} )[1,k-1] \otimes \K,  C^{*} ( E_{2} ) [k,n]  \otimes \K) = \kk^{1} ( C^{*} ( E_{1} )[1,k-1] \otimes \K,  C^{*} ( E_{2} ) [k,n]  \otimes \K)$, 
\begin{align*}
\kk ( \phi_{2} ) \times  [ \tau_{ \mathfrak{e}_{2} } ] = [ \tau_{ \mathfrak{e}_{1} } ] \times \kk ( \phi_{0} )
\end{align*}   
in $\kk^{1} ( C^{*} ( E_{1} ) [1,k-1] \otimes \K , C^{*} ( E_{2} ) [ k,n] \otimes \K)$.  By Lemma~4.5 of \cite{segrer:ccfis}, $C^{*} ( E_{1} ) \otimes \K \cong C^{*} ( E_{2} ) \otimes \K$.
\end{proof}

\begin{theor}\label{t:graphmixed3}
Let $E_{1}$ and $E_{2}$ be graphs such that $C^{*} ( E_{i} )$ is a tight $C^{*}$-algebra over $X_{3}$.  Suppose $K_{0} ( C^{*} (E_{1})[1] )$ is the direct sum of cyclic groups if $C^{*} ( E_{1} ) [1]$ is purely infinite and $K_{0} ( C^{*} ( E_{1} ) [ 1,2] )$ is the direct sum of cyclic groups if $C^{*} ( E_{1} ) [1]$ is an AF-algebra.  Then $C^{*} ( E_{1} ) \otimes \K \cong C^{*} ( E_{2} ) \otimes \K$ if and only if $K_{X_{3}}^{+} ( C^{*} ( E_{1} ) ) \cong K_{X_{3}}^{+} ( C^{*} ( E_{2} ) )$.
\end{theor}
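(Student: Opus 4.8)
The plan is to prove the nontrivial (\emph{if}) direction; the \emph{only if} direction is routine, since any isomorphism $C^{*}(E_{1})\otimes\K\cong C^{*}(E_{2})\otimes\K$ induces a homeomorphism of primitive ideal spaces which must be the identity on the totally ordered space $X_{3}$, hence is $X_{3}$-equivariant and yields $K_{X_{3}}^{+}(C^{*}(E_{1}))\cong K_{X_{3}}^{+}(C^{*}(E_{2}))$ after using the natural identification $K_{X_{3}}^{+}(\mathfrak{A}\otimes\K)\cong K_{X_{3}}^{+}(\mathfrak{A})$. First I would record the structural facts. Since $C^{*}(E_{i})$ is tight over $X_{3}$ it has exactly the ideals $\{0\}\unlhd C^{*}(E_{i})[3]\unlhd C^{*}(E_{i})[2,3]\unlhd C^{*}(E_{i})$; in particular $E_{i}$ satisfies Condition~(K), and by Theorem~5.1 of \cite{ermt:idealsgraph} and Corollary~3.5 of \cite{bgrs_idealstrucgraph} each of the three simple subquotients $C^{*}(E_{i})[j]$ is a simple graph $C^{*}$-algebra, hence an AF-algebra or a Kirchberg algebra, and lies in $\mathcal{N}$; consequently $C^{*}(E_{i})\in\mathcal{B}(X_{3})$. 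A simple graph $C^{*}$-algebra is purely infinite exactly when its ordered $K_{0}$-group satisfies $K_{0}=(K_{0})_{+}$, so an isomorphism $\alpha\colon K_{X_{3}}^{+}(C^{*}(E_{1}))\to K_{X_{3}}^{+}(C^{*}(E_{2}))$ forces $C^{*}(E_{1})[j]$ and $C^{*}(E_{2})[j]$ to be of the same type for every $j\in\{1,2,3\}$. Hence it suffices to fix $\alpha$ and to treat separately the cases indexed by the type pattern of $([1],[2],[3])$, lifting $\alpha$ in each.

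Next I would dispose of the cases covered by existing results. If all three subquotients are AF-algebras, then $E_{i}$ has no cycles, so $C^{*}(E_{i})$ is an AF-algebra and Elliott's classification \cite{af} applies, the required ordered $K_{0}$-isomorphism (respecting the ideal lattice) being part of $\alpha$. If all three are purely infinite, then $C^{*}(E_{i})$ is a purely infinite $C^{*}$-algebra in $\mathcal{B}(X_{3})$ with the linearly ordered ideal lattice of $X_{3}$, so the Meyer--Nest classification \cite[Theorem~4.14]{rmrn:uctkkx} applies; since filtrated $K$-theory agrees with $K_{X_{3}}$ by Remark~\ref{filtrated}, $\alpha$ provides the stable isomorphism. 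Among the six genuinely mixed patterns, the four in which some adjacent pair of subquotients shares a type have their finite and infinite simple subquotients \emph{separated} (by $U=\{3\}$ when $[1],[2]$ share a type, and by $U=\{2,3\}$ when $[2],[3]$ share a type), so they are covered by the separated classification in \cite{segrer:ccfis}.

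The two remaining patterns are the non-separated ones, where $C^{*}(E_{i})[2]$ has the opposite type to the common type of $C^{*}(E_{i})[1]$ and $C^{*}(E_{i})[3]$; these are exactly the situations for which Theorems~\ref{t:graphmixed1} and \ref{t:graphmixed2} were built. When $C^{*}(E_{i})[1]$ and $C^{*}(E_{i})[3]$ are purely infinite and $C^{*}(E_{i})[2]$ is an AF-algebra I would apply Theorem~\ref{t:graphmixed1} with $n=3$; hypotheses (i) and (ii) hold by assumption. When $C^{*}(E_{i})[1]$ and $C^{*}(E_{i})[3]$ are AF-algebras and $C^{*}(E_{i})[2]$ is purely infinite I would apply Theorem~\ref{t:graphmixed2} with $n=3$ and $k=3$, so that the AF subquotients are $C^{*}(E_{i})[k,n]=C^{*}(E_{i})[3]$ and $C^{*}(E_{i})[1,k-2]=C^{*}(E_{i})[1]$ while $C^{*}(E_{i})[k-1]=C^{*}(E_{i})[2]$ is purely infinite, so again (i) and (ii) are immediate.

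The main obstacle, and the only place the $K$-theoretic hypothesis is used, is verifying hypothesis~(iii) of each theorem, namely that $\kk^{1}$ and $\kl^{1}$ coincide for the relevant pair. By the definition of \kl\ this equality is equivalent to the vanishing of the associated $\mathrm{Pext}_{\Z}^{1}$-subgroup, and $\mathrm{Pext}_{\Z}^{1}(G,H)=0$ whenever $G$ is a direct sum of cyclic groups. In the first non-separated case the first variable is $C^{*}(E_{1})[1]$, which is purely infinite, so $K_{0}(C^{*}(E_{1})[1])$ is a direct sum of cyclic groups by hypothesis; in the second it is $C^{*}(E_{1})[1,2]$, and $K_{0}(C^{*}(E_{1})[1,2])$ is a direct sum of cyclic groups by hypothesis. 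In both cases the relevant subquotient is again a graph $C^{*}$-algebra, so its $K_{1}$-group is free and hence also a direct sum of cyclic groups; therefore the full graded group $K_{*}$ is a direct sum of cyclic groups and the $\mathrm{Pext}_{\Z}^{1}$-subgroup vanishes, establishing~(iii). With (iii) verified, Theorems~\ref{t:graphmixed1} and \ref{t:graphmixed2} deliver $C^{*}(E_{1})\otimes\K\cong C^{*}(E_{2})\otimes\K$, completing the last two cases and hence the proof. The points requiring care are precisely that the ordered ideal-related $K$-theory detects the AF/purely-infinite dichotomy of the subquotients and that $K_{1}$ of each graph-algebra subquotient is free, both of which are standard.
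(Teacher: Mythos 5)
Your proof is correct and follows essentially the same route as the paper: split according to whether the finite and infinite simple subquotients are separated, handle the separated configurations by the earlier classification results, and reduce the two non-separated patterns to Theorems~\ref{t:graphmixed1} and~\ref{t:graphmixed2}, verifying their hypothesis~(iii) via the vanishing of $\mathrm{Pext}_{\Z}^{1}$ for direct sums of cyclic groups together with the freeness of $K_{1}$ of graph-algebra subquotients. The only cosmetic difference is that you subdivide the separated case (Elliott, Meyer--Nest, and the separated theorem of \cite{segrer:ccfis}) where the paper invokes Theorem~6.9 of \cite{segrer:ccfis} uniformly.
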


\begin{proof}
The ``only if'' direction is clear.  Suppose $K_{X_{3}}^{+} ( C^{*} ( E_{1} ) ) \cong K_{X_{3}}^{+} ( C^{*} ( E_{2} ) )$.  Suppose $C^{*} ( E_{1})[1]$ is purely infinite.  Then $K_{0} ( C^{*}(E_{1} )[1] )$ is the direct sum of cyclic groups.  Thus, $\mathrm{Pext}_{\Z}^{1} ( K_{0} ( C^{*} ( E_{1} )[1] ) , K_{0} ( C^{*} ( E_{2} ) [2] ) ) = 0$.  Since $K_{1} ( C^{*} ( E_{1} )[1] )$ is a free group, $\mathrm{Pext}_{\Z}^{1} ( K_{1} ( C^{*} ( E_{1} )[1] ) , K_{1} ( C^{*} ( E_{2} ) [2] ) ) = 0$.  Hence,  
\begin{align*}
\kk^{1} ( C^{*} ( E_{1} ) [1] , C^{*} ( E_{2} ) [2,3] ) = \kl^{1} ( C^{*} ( E_{1} ) [1] , C^{*} ( E_{2} ) [2,3] ).
\end{align*}

Suppose $C^{*} ( E_{1})[1]$ is an AF-algebra.  Then $K_{0} ( C^{*}(E_{1} )[1,2] )$ is the direct sum of cyclic groups.  Thus, $\mathrm{Pext}_{\Z}^{1} ( K_{0} ( C^{*} ( E_{1} )[1,2] ) , K_{0} ( C^{*} ( E_{2} ) [3] ) ) = 0$.  Since $K_{1} ( C^{*} ( E_{1} )[1,2] )$ is a free group, $\mathrm{Pext}_{\Z}^{1} ( K_{1} ( C^{*} ( E_{1} )[1,2] ) , K_{1} ( C^{*} ( E_{2} ) [3] ) )  = 0$.   Therefore, 
\begin{align*}
\kk^{1} ( C^{*} ( E_{1} ) [1,2] , C^{*} ( E_{2} ) [3] ) = \kl^{1} ( C^{*} ( E_{1} ) [1,2] , C^{*} ( E_{2} ) [3] ).
\end{align*}

\medskip

\noindent \emph{Case 1: Suppose the finite and infinite simple sub-quotients of $C^{*} ( E_{1} )$ are separated.} Then the finite and infinite simple sub-quotients of $C^{*} ( E_{2} )$ are separated.  Hence, by Theorem~6.9 of \cite{segrer:ccfis}, $C^{*} ( E_{1} ) \otimes \K \cong C^{*} ( E_{2} ) \otimes \K$.  

\medskip

\noindent \emph{Case 2: Suppose the finite and infinite simple sub-quotients of $C^{*} ( E_{1} )$ are not separated.}  Then the finite and infinite simple sub-quotients of $C^{*} ( E_{2} )$ are not separated.    

\medskip

\emph{Subcase 2.1:  Suppose $C^{*} ( E_{1} ) [3]$ and $C^{*} (E_{1})[1]$ are purely infinite and $C^{*} ( E_{1} ) [2]$ is an AF-algebra.}  Then $C^{*} ( E_{2} ) [3]$ and $C^{*} (E_{2})[1]$ are purely infinite and $C^{*} ( E_{2} ) [2]$ is an AF-algebra.  Then by the above paragraph we have that $\kk^{1} ( C^{*} ( E_{1} ) [1] , C^{*} ( E_{2} ) [2,3] ) = \kl^{1} ( C^{*} ( E_{1} ) [1] , C^{*} ( E_{2} ) [2,3] )$.  Hence, by Theorem~\ref{t:graphmixed1}, $C^{*} ( E_{1} ) \otimes \K \cong C^{*} ( E_{2} ) \otimes \K$.

\medskip

\emph{Subcase 2.2: Suppose $C^{*} ( E_{1} ) [3]$ and $C^{*} (E_{1})[1]$ are AF-algebras and $C^{*} ( E_{1} ) [2]$ is purely infinite.}  Then $C^{*} ( E_{2} ) [3]$ and $C^{*} (E_{2})[1]$ are AF-algebras and $C^{*} ( E_{2} ) [2]$ is purely infinite.  Then by the above paragraph we have that 
\begin{align*}
\kk^{1} ( C^{*} ( E_{1} ) [1,2] , C^{*} ( E_{2} ) [3] ) = \kl^{1} ( C^{*} ( E_{1} ) [1,2] , C^{*} ( E_{2} ) [3] ).
\end{align*}
Hence, by Theorem~\ref{t:graphmixed2}, $C^{*} ( E_{1} ) \otimes \K \cong C^{*} ( E_{2} ) \otimes \K$.
\end{proof}

\begin{corol}
Let $E_{1}$ and $E_{2}$ be graphs such that $C^{*} ( E_{i} )$ is a tight $C^{*}$-algebra over $X_{3}$.  Suppose that $K_{0} ( C^{*} (E_{i}) )$ is finitely generated.  Then $C^{*} ( E_{1} ) \otimes \K \cong C^{*} ( E_{2} ) \otimes \K$ if and only if $K_{X_{3}}^{+} ( C^{*} ( E_{1} ) ) \cong K_{X_{3}}^{+} ( C^{*} ( E_{2} ) )$.
\end{corol}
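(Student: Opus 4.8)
The plan is to deduce the statement directly from Theorem~\ref{t:graphmixed3}, whose only hypothesis beyond $C^{*}(E_i)$ being tight over $X_{3}$ is that a single $K_{0}$-group of a subquotient of $C^{*}(E_1)$ be a direct sum of cyclic groups (the freeness of the relevant $K_{1}$-groups is automatic for graph algebras and is already used inside that theorem). Since every finitely generated abelian group is a direct sum of cyclic groups, it suffices to prove that the subquotient $K_{0}$-group demanded by Theorem~\ref{t:graphmixed3} is finitely generated, using only the assumption that $K_{0}(C^{*}(E_1))$ is finitely generated. Note that $C^{*}(E_1)[1]=C^{*}(E_1)/C^{*}(E_1)[2,3]$ and $C^{*}(E_1)[1,2]=C^{*}(E_1)/C^{*}(E_1)[3]$ are gauge-invariant quotients of $C^{*}(E_1)$, so I would aim to show that $K_{0}$ of any such quotient is finitely generated.

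The key step is the surjectivity of $K_{0}(C^{*}(E_1))\to K_{0}(C^{*}(E_1)/\mathfrak{I})$ for every gauge-invariant ideal $\mathfrak{I}$. The justification I would give is that the $K$-theory of a graph $C^{*}$-algebra is the homology of a two-term complex of free abelian groups, with $K_{1}$ the kernel and $K_{0}$ the cokernel, and that a gauge-invariant ideal corresponds to a degreewise-split subcomplex. The long exact homology sequence then begins $0\to K_{1}(\mathfrak{I})\to K_{1}(C^{*}(E_1))$, so the index map $K_{0}(C^{*}(E_1)/\mathfrak{I})\to K_{1}(\mathfrak{I})$ vanishes and $K_{0}(C^{*}(E_1))\twoheadrightarrow K_{0}(C^{*}(E_1)/\mathfrak{I})$. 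Consequently $K_{0}$ of every gauge-invariant quotient of $C^{*}(E_1)$ is a quotient of the finitely generated group $K_{0}(C^{*}(E_1))$, hence finitely generated, hence a direct sum of cyclic groups.

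Applying this to $\mathfrak{I}=C^{*}(E_1)[2,3]$ and $\mathfrak{I}=C^{*}(E_1)[3]$ shows that both $K_{0}(C^{*}(E_1)[1])$ and $K_{0}(C^{*}(E_1)[1,2])$ are direct sums of cyclic groups, so the hypothesis of Theorem~\ref{t:graphmixed3} holds whether $C^{*}(E_1)[1]$ is purely infinite or an AF-algebra. That theorem then yields the ``if'' direction, and the ``only if'' direction is immediate since $K_{X_{3}}^{+}(-)$ is a stable-isomorphism invariant. The main obstacle is precisely the surjectivity $K_{0}(C^{*}(E_1))\twoheadrightarrow K_{0}(C^{*}(E_1)/\mathfrak{I})$: abstractly, finite generation of $K_{0}$ does \emph{not} pass to quotients by ideals, because the cokernel of $K_{0}(A)\to K_{0}(A/\mathfrak{I})$ embeds into $K_{1}(\mathfrak{I})$, which could a priori be infinitely generated (a simple purely infinite graph algebra can have infinitely generated $K_{0}$ and $K_{1}$). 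The step therefore relies genuinely on the graph-algebra-specific injectivity $K_{1}(\mathfrak{I})\to K_{1}(C^{*}(E_1))$, which I would establish via the two-term complex computing graph $K$-theory or locate among the standard ideal-related $K$-theory computations for graph algebras.
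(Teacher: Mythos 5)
Your proposal is correct, and it reduces the corollary to Theorem~\ref{t:graphmixed3} in exactly the way the paper does; the difference lies entirely in how you justify the key surjectivity of $K_{0}(C^{*}(E_{1}))\to K_{0}(C^{*}(E_{1})/\mathfrak{I})$. You obtain it from graph-algebra-specific structure: the two-term complex of free abelian groups computing $K_{*}(C^{*}(E_{1}))$, the degreewise-split subcomplex attached to a gauge-invariant ideal, and the resulting vanishing of the exponential map $K_{0}(C^{*}(E_{1})/\mathfrak{I})\to K_{1}(\mathfrak{I})$. That is a true and standard fact, but it is the heaviest ingredient in your argument and you leave it at the level of a sketch (the identification of the six-term sequence of the extension with the long exact homology sequence of the adjacency complexes is the real content there). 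The paper instead gets the surjectivity for free from real rank zero: since $C^{*}(E_{i})$ is tight over $X_{3}$ it has finitely many ideals, so $E_{i}$ satisfies Condition~(K) and $C^{*}(E_{i})$ has real rank zero, and for a real rank zero $C^{*}$-algebra the map $K_{0}(\mathfrak{A})\to K_{0}(\mathfrak{A}/\mathfrak{I})$ is surjective for \emph{every} ideal $\mathfrak{I}$ (projections lift and the exponential map vanishes). Your route buys independence from real rank zero and would survive in settings without Condition~(K); the paper's route is a one-line citation. A small point in your favor: you explicitly treat both quotients $C^{*}(E_{1})[1]$ and $C^{*}(E_{1})[1,2]$, as the two cases of Theorem~\ref{t:graphmixed3} require, whereas the paper's proof only mentions $K_{0}(C^{*}(E_{i})[1])$ and leaves the AF case of the hypothesis implicit (the same real-rank-zero argument covers it). Your observation that finite generation does not in general pass to quotients, so that some such input is genuinely needed, is also correct and well placed.
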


\begin{proof}
Since $C^{*} (E_{i})$ is real rank zero, the canonical projection $\ftn{ \pi }{ C^{*} (E_{i} ) } { C^{*} ( E_{i} ) [ 1 ] }$ induces a surjective homomorphism $\ftn{ \pi }{ K_{0} ( C^{*} (E_{i} ) ) } { K_{0} ( C^{*} ( E_{i} ) [ 1 ] ) }$.  Hence, $K_{0} ( C^{*} ( E_{i} ) [ 1 ] )$ is finitely generated since $K_{0} ( C^{*} (E_{i}) )$ is finitely generated.  The corollary now follows from Theorem~\ref{t:graphmixed3}.
\end{proof}

\def\cprime{$'$}


\end{document}